\newtheorem{theorem}{Theorem}
\newtheorem{lemma}[theorem]{Lemma}
\theoremstyle{definition}
\newtheorem{definition}[theorem]{Definition}
\theoremstyle{remark}
\numberwithin{theorem}{section}
\numberwithin{equation}{section}
\newcommand{\N}{\mathbb{N}}
\newcommand{\R}{\mathbb{R}}
\renewcommand{\L}{\mathcal{L}}
\newcommand{\dx}{\mathrm{d}x}
\newcommand{\dt}{\mathrm{d}t}
\newcommand{\ds}{\mathrm{d}s}
\newcommand{\dsigma}{\mathrm{d}\sigma}
\renewcommand{\epsilon}{\varepsilon}
\renewcommand{\rho}{\varrho}
\DeclareMathOperator{\Div}{div}
\newcommand{\wto}{\rightharpoondown}
\newcommand{\wsto}{\overset{\raisebox{-1ex}{\scriptsize $*$}}{\rightharpoondown}}
\def\Xint#1{\mathchoice
    {\XXint\displaystyle\textstyle{#1}}%
    {\XXint\textstyle\scriptstyle{#1}}%
    {\XXint\scriptstyle\scriptscriptstyle{#1}}%
    {\XXint\scriptscriptstyle\scriptscriptstyle{#1}}%
    \!\int}
\def\XXint#1#2#3{\setbox0=\hbox{$#1{#2#3}{\int}$}
    \vcenter{\hbox{$#2#3$}}\kern-0.5\wd0}
\def\bint{\Xint-}
\def\dashint{\Xint{\raise4pt\hbox to7pt{\hrulefill}}}
\def\Xiint#1{\mathchoice
    {\XXiint\displaystyle\textstyle{#1}}%
    {\XXiint\textstyle\scriptstyle{#1}}%
    {\XXiint\scriptstyle\scriptscriptstyle{#1}}%
    {\XXiint\scriptscriptstyle\scriptscriptstyle{#1}}%
    \!\iint}
\def\XXiint#1#2#3{\setbox0=\hbox{$#1{#2#3}{\iint}$}
    \vcenter{\hbox{$#2#3$}}\kern-0.5\wd0}
\def\biint{\Xiint{-\!-}}
\subjclass[2020]{35A01, 35K61, 35K86, 49J40}
\keywords{Existence; Parabolic equations; Bounded slope condition; Lipschitz solutions; time-dependent integrand}
\begin{document}
\title[The bounded slope condition for time-dependent integrands]{The bounded slope condition for parabolic equations with time-dependent integrands}
\date{\today}

\author[L.~Sch\"atzler]{Leah Sch\"atzler}
\address{Leah Sch\"atzler\\
Fachbereich Mathematik, Paris-Lodron Universit\"at Salzburg\\
Hellbrunner Straße 34, 5020 Salzburg, Austria}
\email{leahanna.schaetzler@plus.ac.at}

\author[J.~Siltakoski]{Jarkko Siltakoski}
\address{Jarkko Siltakoski\\
Department of Mathematics and Statistics, University of Jyv\"as-\\kyl\"a
P.O.Box 35, FIN-40014, Finland}
\email{jarkko.j.m.siltakoski@jyu.fi}

\begin{abstract}
In this paper, we study the Cauchy-Dirichlet problem
\begin{equation*}
	\left\{
	\begin{array}{ll}
		\mbox{$\partial_t u - \operatorname{div} \left( D_\xi f(t, Du)\right) = 0$ } & \mbox{\quad in $\Omega_T$}, \\[5pt]
		\mbox{$u = u_o$} & \mbox{\quad on $\partial_{\mathcal{P}} \Omega _T$},\\[5pt]
	\end{array}
	\right.
\end{equation*}
where $\Omega \subset \mathbb{R}^n$ is a convex domain, $f:[0,T]\times\mathbb{R}^n \rightarrow \mathbb{R}$ is $L^1$-integrable in time and convex in the second variable. Assuming that the initial and boundary datum $u_o:\overline{\Omega}\rightarrow \mathbb{R}$ satisfies the bounded slope condition, we prove the existence of a unique variational solution that is Lipschitz continuous in the space variable.
\end{abstract}


\maketitle

\section{Introduction and results}
It follows from classical theory \cite{Haar, Hartman-Nirenberg, Hartman-Stampacchia, Miranda, Stampacchia} (see also \cite[Chapter 1]{Giusti}) that any variational functional $F:W^{1,\infty}(\Omega) \rightarrow \mathbb{R}$ of the form
\begin{equation*}
	F(v) := \int_\Omega f(Dv) \, \dx,
\end{equation*}
where $f:\mathbb{R}^n \rightarrow \mathbb{R}$ is convex and $\Omega \subset \mathbb{R}^n$ is a convex domain, admits a unique Lipschitz continuous minimizer in the class $\{v \in W^{1,\infty} (\Omega): v=v_o \text{ on } \partial \Omega\}$ provided that the boundary datum $v_o$  satisfies the bounded slope condition (see Definition \ref{def:bounded_slope}).
Modern elliptic results involving one-sided bounded slope conditions or more general integrands include for example \cite{Bousquet07, Bousquet10,Bousquet-Brasco16} and \cite{Giannetti-Treu22, Mariconda-Treu02, Mariconda-Treu07, Mariconda-Treu11, Don-Lussardi-Pinamonti-Treu22}.

Surprisingly, while Hardt and Zhou \cite[Chapter 4]{Hardt-Zhou} used the bounded slope condition in a regularity argument in a time-dependent setting involving functionals with linear growth, an evolutionary analogue of the above stationary theorem was established only rather recently by  Bögelein, Duzaar, Marcellini and Signoriello \cite{BDMS}. They considered the Cauchy-Dirichlet problem
\begin{equation*}
	\left\{
	\begin{array}{ll}
		\mbox{$\partial_t u - \operatorname{div} \left( D f(Du)\right) = 0$ } & \mbox{\quad in $\Omega_T$}, \\[5pt]
		\mbox{$u = u_o$} & \mbox{\quad on $\partial_{\mathcal{P}} \Omega _T$},\\[5pt]
	\end{array}
	\right.
\end{equation*}
where $\Omega_T := \Omega \times (0,T)$ with $\Omega \subset \R^n$ and $T \in (0,\infty]$ denotes a space-time cylinder and $\partial_\mathcal{P} \Omega_T := \partial \Omega \times (0,T) \cup (\overline \Omega \times \{0\})$ its parabolic boundary. Given a Lipschitz continuous initial and boundary datum $u_o$ that satisfies the bounded slope condition, in \cite{BDMS} it was proven that the above problem admits a unique variational solution that is globally Lipschitz continuous with respect to the spatial variables.
Moreover, if the integrand $f$ fulfills an additional $p$-coercivity condition with some $p>1$, Bögelein and Stanin \cite{Bogelein-Stanin} obtained the local Lipschitz continuity of variational solutions in space and time under the weaker one-sided bounded slope condition, which means that the $u_o$ is convex and Lipschitz continuous.
Further, global continuity of $u$ was proven in the case that $\Omega$ is uniformly convex.

For the same class of integrands and merely convex domains $\Omega$, Stanin \cite{Stanin} showed that variational solutions are still globally Hölder continuous even if the convexity assumption on $u_o$ is dropped. Equations with lower-order terms were considered by Rainer,  Siltakoski and Stanin \cite{Rainer-Siltakoski-Stanin} who extended a stationary Haar-Rado type theorem by Mariconda and Treu \cite{Mariconda-Treu11} to the parabolic problem
\begin{equation*}
	\left\{
	\begin{array}{ll}
		\mbox{$\partial_t u - \operatorname{div} \left( D f(Du)\right)  + D_u g(x, u) = 0$ } & \mbox{\quad in $\Omega_T$}, \\[5pt]
		\mbox{$u = u_o$} & \mbox{\quad on $\partial_{\mathcal{P}} \Omega _T$},\\[5pt]
	\end{array}
	\right.
\end{equation*}
where $f$ is convex and $p$-coercive with some $p>1$ and the lower-order term $g$ satisfies a technical condition, in particular convexity with respect to $u$. As a corollary, the authors in \cite{Rainer-Siltakoski-Stanin} obtained the global Lipschitz continuity with respect to the spatial variables of variational solutions under the classical two-sided bounded slope condition provided that $f \in C^2$ is uniformly convex in a suitable sense.

The objective of the present paper is to extend the result of \cite{BDMS} to include time-dependent integrands.
In order to focus on the novelty and to include integrands $f$ with linear growth, we consider the classical bounded slope condition and avoid lower-order terms.
We are concerned with parabolic partial differential equations of the form
\begin{equation}
	\partial_t u - \Div (D_\xi f(t, Du)) = 0
	\quad \text{in } \Omega_T,
	\label{eq:pde}
\end{equation}
where $\Omega \subset \mathbb{R}^n$ is a convex domain,  $T \in (0,\infty]$, and the integrand $f \colon [0,T] \times \R^n \to \R$ satisfies the following assumptions:
\begin{equation}
	\left\{
	\begin{array}{l}
		\mbox{$\xi \mapsto f(t,\xi)$ is convex for a.e.~$t \in [0,T]$,} \\[5pt]
		\mbox{$t \mapsto f(t,\xi) \in L^1(0,\tau)$ for all $\xi \in \R^n$ and $\tau \in (0, T] \cap \R$.} \\[5pt]
	\end{array}
	\right.
	\label{eq:integrand}
\end{equation}
In particular, $f$ is a Carath\'eodory function and for any $L>0$ and $\tau \in (0, T] \cap \R$ the map $t \mapsto \max_{|\xi| \leq L} |f(t,\xi)|$ belongs to $L^1(0, \tau)$ (see Section \ref{sec:convex} below).
Therefore, for any $\tau \in (0, T] \cap \R$ and $V \in L^\infty(\Omega_T,\R^n)$ we have that 
$$
	\iint_{\Omega_T} |f(t,V)| \,\dx\dt < \infty.
$$
We emphasize that $t \mapsto f(t, \xi)$ is neither assumed to be continuous nor weakly differentiable.

Examples of admissible integrands are functionals with linear growth such as the area integrand $f(\xi) = \sqrt{1 + |\xi|^2}$, integrands with exponential growth like $f(\xi) = \exp(|\xi|^2)$, Orlicz type functionals such as $f(\xi) = |\xi| \log(1 + |\xi|)$ and time-dependent combinations thereof like $f(t,\xi) = \chi_{[0,t_o]} f_1(\xi) + \chi_{(t_o, T]} f_2(\xi)$ or more general $f(t,\xi) = \sum_{i=1}^m a_i(t) f_i(\xi)$ for functions $a_i \in L^1(0,T)$, $i=1,\ldots,m$.

In the present paper, we work with variational solutions in the spirit of Lichnewsky and Temam \cite{Lichnewsky-Temam}, see also for example \cite{BDM, BDM14, BDMS}. We consider the following class of functions that are Lipschitz continuous in space
\begin{equation*}
	K^{\infty} := \{ v \in L^{\infty}(\Omega_T) \cap C^0 ([0, T]; L^2(\Omega)): Dv \in L^\infty(\Omega_T, \mathbb{R}^n) \}.
\end{equation*}
Further, we denote the subclass related to time-independent boundary values $u_o \in W^{1, \infty} (\Omega)$ by
\begin{equation*}
	K^{\infty}_{u_o} := \{ v\in K^L(\Omega_T): v = u_o \text{ on the lateral boundary } \partial \Omega \times (0,T) \}.
\end{equation*}

\begin{definition}[Variational solutions]
\label{def:unconstrainded_solution}
Assume that $f \colon [0,T] \times \R^n \to \R$ satisfies \eqref{eq:integrand}
and consider a boundary datum $u_o \in W^{1,\infty}(\Omega)$.
In the case $T \in (0,\infty)$ a map $u \in K^\infty_{u_o}(\Omega_T)$ is called a \emph{variational solution}
to the Cauchy-Dirichlet problem associated with \eqref{eq:pde} and $u_o$ in $\Omega_T$
if and only if the variational inequality
\begin{align}
	\iint_{\Omega_T} f(t,Du) \,\dx\dt
	&\leq
	\iint_{\Omega_T} \partial_t v (v - u) + f(t,Dv) \,\dx\dt \label{eq:var_ineq_unconstrained} \\
	&\phantom{=}
	+ \tfrac12 \|v(0) - u_o\|_{L^2(\Omega)}^2
	- \tfrac12 \|(v - u)(T)\|_{L^2(\Omega)}^2
	\nonumber
\end{align}
holds true for any comparison map $v \in K^\infty_{u_o}(\Omega_T)$
with $\partial_t v \in L^2(\Omega_T)$.
If $T=\infty$ and $u \in K^\infty_{u_o}(\Omega_\infty)$ is a variational solution
in $\Omega_\tau$ for any $\tau \in (0,\infty)$, $u$ is called a \emph{global variational solution}
or variational solution in $\Omega_\infty$ to the Cauchy-Dirichlet problem
associated with \eqref{eq:pde} and $u_o$.
\end{definition}

Our main result concerning the existence of variational solutions
which are Lipschitz continuous with respect to the spatial variables
can be formulated as follows.
\begin{theorem}
\label{thm:main_existence}
Let $\Omega \subset \R^n$ be an open, bounded and convex set and $T \in (0, \infty]$.
Assume that $f \colon [0,T] \times \R^n \to \R$ satisfies hypotheses
\eqref{eq:integrand}.
Further, let $u_o \in W^{1,\infty}(\Omega_T)$ denote a boundary datum such that
the bounded slope condition with some positive constant $Q$
(see Definition \ref{def:bounded_slope} below)
is fulfilled for $U_o := \left. u_o \right|_{\partial\Omega}$.
Then, there exists a unique variational solution $u$ to the Cauchy-Dirichlet problem
associated with \eqref{eq:pde} and $u_o$ in $\Omega_T$.
Moreover, $u$ satisfies the gradient bound
\begin{equation}
	\|Du\|_{L^\infty(\Omega_T,\R^n)}
	\leq
	\max\{ Q, \|Du_o\|_{L^\infty(\Omega,\R^n)} \}.
	\label{eq:gradient_bound}
\end{equation}
\end{theorem}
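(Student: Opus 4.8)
The strategy is to combine a time-discretization (Rothe/minimizing-movements) scheme with the stationary Lipschitz theory that is quoted in the introduction. Fix a step size $h = \tau/N$ for $\tau \in (0,T] \cap \R$ and define recursively $u_0 := u_o$ and, for $k = 1, \dots, N$, let $u_k \in K^\infty_{u_o}(\Omega)$ be a minimizer of
\begin{equation*}
	v \mapsto \int_\Omega \Big( \tfrac{1}{2h} |v - u_{k-1}|^2 + f_k(Dv) \Big) \,\dx,
	\qquad f_k(\xi) := \dashint_{(k-1)h}^{kh} f(t,\xi) \,\dt,
\end{equation*}
among $v \in W^{1,\infty}(\Omega)$ with $v = u_o$ on $\partial\Omega$. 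Since $f_k$ is convex (an average of convex functions) and finite-valued by \eqref{eq:integrand}, so is $v \mapsto \frac{1}{2h}|v-u_{k-1}|^2 + f_k(v)$; existence of a minimizer follows from the direct method, and uniqueness from strict convexity of the $L^2$-term.

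The crucial point is the uniform Lipschitz bound. Here I would argue inductively that each $u_k$ satisfies the bounded slope condition with the \emph{same} constant $\max\{Q, \|Du_o\|_{L^\infty(\Omega)}\}$, hence is Lipschitz with that constant. The base case is $u_0 = u_o$. For the inductive step, suppose $u_{k-1}$ has the stated Lipschitz constant. Then the quadratic term $\frac{1}{2h}|v - u_{k-1}|^2$ is \emph{not} of the pure gradient form $\int_\Omega F(Dv)$, so I cannot cite the classical theorem verbatim; instead I would use the translation/comparison technique underlying the bounded slope condition (cf.\ \cite{Hartman-Stampacchia, BDMS}): for a boundary point $x_o \in \partial\Omega$ and the affine function $\ell_{x_o}^\pm$ from Definition \ref{def:bounded_slope}, compare $u_k$ with $\max\{u_k, \ell_{x_o}^- + c\}$ / $\min\{u_k, \ell_{x_o}^+ + c\}$ and, crucially, exploit that $u_{k-1}$ already lies between the shifted planes, so that the quadratic penalty does not spoil the comparison. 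This gives $u_k$ between two parallel hyperplanes through each boundary point with slope $\le Q$, and an interior comparison of $u_k$ with its own translates $u_k(\cdot + e)$ upgrades this to the full interior Lipschitz bound $\max\{Q,\|Du_o\|_{L^\infty}\}$. I expect \textbf{this inductive Lipschitz estimate in the presence of the $L^2$-penalty} to be the main obstacle; the needed observation is that the penalty term is monotone under the truncations used in the bounded-slope argument, so it only helps.

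Next I would derive the standard energy and compactness estimates for the piecewise-constant and piecewise-linear interpolants $u^{(h)}$, $\bar u^{(h)}$ built from $\{u_k\}$: testing the Euler–Lagrange inequality for $u_k$ with $v = u_{k-1}$ yields $\sum_k \frac{1}{2h}\|u_k - u_{k-1}\|_{L^2}^2 + \sum_k h \,\dashint f(t,Du_k) \le \sum_k h\, \dashint f(t, Du_{k-1}) + (\text{telescoping})$, which together with the uniform Lipschitz bound and the $L^1$-in-time control $t\mapsto \max_{|\xi|\le L}|f(t,\xi)| \in L^1$ gives uniform bounds on $\|u^{(h)}\|_{L^\infty}$, $\|Du^{(h)}\|_{L^\infty}$, and $\|\partial_t \bar u^{(h)}\|_{L^2(\Omega_\tau)}$. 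By Aubin–Lions (or Arzelà–Ascoli in $C^0([0,\tau];L^2)$ using equicontinuity in time from the $\partial_t$ bound) and weak-$*$ compactness, a subsequence converges to some $u \in K^\infty_{u_o}(\Omega_\tau)$ with $Du^{(h)} \wsto Du$ in $L^\infty$ and $u$ inheriting the gradient bound \eqref{eq:gradient_bound}.

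Finally I would pass to the limit in the discrete variational inequalities to obtain \eqref{eq:var_ineq_unconstrained}. Summing the minimality inequalities $\int_\Omega f_k(Du_k) \le \int_\Omega \partial_t v_k (v_k - u_k) + f_k(Dv_k) + \ldots$ over $k$ for a fixed smooth comparison map $v$ (with $\partial_t v \in L^2$), the left-hand side is lower semicontinuous under weak-$*$ convergence of gradients by convexity of $f$ in $\xi$ together with $L^1$-in-time integrability (a Ioffe-type lsc result, as prepared in Section \ref{sec:convex}), while every term on the right converges by strong $L^2$-convergence of $u^{(h)}$, the time-averaging $f_k(\xi) \to f(t,\xi)$ in $L^1_t$ for fixed $\xi$ (dominated convergence), and continuity of $v$; the two boundary terms $\frac12\|v(0)-u_o\|^2$ and $-\frac12\|(v-u)(\tau)\|^2$ appear from the telescoped $\frac{1}{2h}|v_k - u_{k-1}|^2$ sums and the $C^0([0,\tau];L^2)$-convergence at the endpoint. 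This establishes existence with the gradient bound for every finite $\tau$, hence (taking $\tau = T$ or exhausting $T = \infty$) the global statement. Uniqueness is separate and standard: given two variational solutions $u_1, u_2$, insert mollified-in-time versions of each as comparison map in the inequality for the other, add, and use convexity of $f(t,\cdot)$ to conclude $\|(u_1-u_2)(t)\|_{L^2}^2 \le 0$ for a.e.\ $t$; I would carry this out following the argument of \cite{BDMS}, noting that only convexity and the $L^1$-in-time bound are used.
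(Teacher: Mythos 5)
Your proposal takes a genuinely different route from the paper. The paper proves existence in three separate stages: (i) for integrands with $t\mapsto f(t,\xi)\in W^{1,1}$, a minimizing-movements scheme for a \emph{gradient-constrained} obstacle problem (comparison maps restricted to $\|Dv\|_{L^\infty}\le L$); (ii) the bounded-slope/translation argument carried out at the \emph{continuous} level, using the comparison principle for variational solutions (Theorem~\ref{thm:comparison principle}) against the affine solutions of Lemma~\ref{lem:affine_is_sol}, and the maximum principle (Lemma~\ref{lem:maximum_principle}) for spatial translates, to deduce \eqref{eq:gradient_bound} and remove the constraint; (iii) a Steklov-in-time approximation $f_\eps$ and a second passage to the limit. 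You instead propose to prove the Lipschitz bound inductively at the \emph{discrete} level, and to absorb the time-regularization into the definition of $f_k$ as an interval average, collapsing stages (i) and (iii). That is an appealing and, at heart, viable reorganization, but two of its steps have real gaps.

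First, existence of the discrete minimizers. You minimize $v\mapsto \tfrac1{2h}\int|v-u_{k-1}|^2+\int f_k(Dv)$ over the \emph{unconstrained} class $\{v\in W^{1,\infty}:v=u_o\text{ on }\partial\Omega\}$ and assert existence by the direct method. But the zero-order term only gives coercivity in $L^2$, and $f_k$ is not assumed superlinear (recall the area integrand is admissible), so a minimizing sequence gives no a priori control on $\|Dv_j\|_{L^\infty}$ and hence no compactness for the gradients. This is precisely why the paper introduces the gradient-constrained class $\mathcal A$: there the constraint $\|Dv\|_{L^\infty}\le L$ yields compactness for free, and the constraint is removed only \emph{after} the a~priori bound \eqref{eq:gradient_bound} is established (by the convex-combination argument at the end of Theorem~\ref{thm:existence_unconstrained_regular}). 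You would need to either reintroduce the constrained class as an intermediate device, or make the inductive BSC estimate apply to a minimizing sequence rather than a putative minimizer. Your inductive truncation argument for the boundary bound is otherwise sound (testing the discrete functional with $\min(u_k,w_{x_o}^+)$ works because $u_{k-1}\le w_{x_o}^+$ makes the $L^2$-penalty decrease under truncation, exactly as you indicate), but it logically presupposes that the minimizer exists.

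Second, the discrete energy estimate. You test with $v=u_{k-1}$ and sum, expecting the $f$-terms to ``telescope.'' With $f_k(\xi)=\bint_{(k-1)h}^{kh}f(t,\xi)\,\dt$, the sum $\sum_k\int_\Omega\bigl(f_k(Du_{k-1})-f_k(Du_k)\bigr)\,\dx$ does \emph{not} telescope: after shifting indices one is left with
$\sum_{k}\int_\Omega\bigl(f_{k+1}-f_k\bigr)(Du_k)\,\dx$,
whose absolute value is controlled only by the discrete total variation of $t\mapsto f(t,\cdot)$ on the mesh, and this can be unbounded as $h\downarrow0$ when $f$ is merely $L^1$ in time. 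The paper avoids this by running the minimizing-movements scheme for integrands satisfying \eqref{eq:regular_f}, where $|\partial_t f(\cdot,\xi)|\le\tilde g_L$ makes the cross terms summable (see the computation leading to \eqref{eq:energy_estimate}), and then regularizing a general $f$ by Steklov averaging; this is what produces the uniform $L^2$-bound on $\partial_t u$. Without this, you do not get the stated $L^2$-bound on $\partial_t\bar u^{(h)}$, and both the strong convergence needed for the final boundary term $-\tfrac12\|(v-u)(T)\|_{L^2}^2$ and the $C^0([0,T];L^2(\Omega))$-regularity of the limit would have to be obtained by a different route (for instance along the lines of Lemma~\ref{lem:time_continuity}, which the paper uses in Section~\ref{sec:unconstrained_for_general}); your proposal does not indicate such a route.

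In short: the uniqueness argument and the lower-semicontinuity mechanism match the paper, and the inductive-BSC idea is a legitimate alternative to the paper's continuous-level comparison; but as written, the existence of discrete minimizers and the $\partial_t$-estimate are not justified, and both failures trace back to the decision to skip the gradient constraint and the $W^{1,1}$-in-time regularization that the paper uses as scaffolding.
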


Furthermore, we show that variational solutions to \eqref{eq:pde} are weak solutions and consequently, they are $1/2$-Hölder continuous in time provided that the map $\xi \mapsto f(t, \xi)$ is $C^1$ and uniformly locally Lipschitz in the following sense: For each $L>0$, there exists a constant $M_L>0$ such that
\begin{equation}
\label{eq:uniform_lipschitz}
	\sup_{t\in(0,T)} |D_\xi f(t, \xi)| < M_L \quad \text{for all} \quad \xi \in B_L(0).
\end{equation}

\begin{theorem}
\label{thm:regularity}
Suppose that the assumptions of Theorem \ref{thm:main_existence} hold. Moreover, assume that the mapping $\xi \mapsto f(t, \xi)$ is in $C^1({\mathbb{R}}^n)$ for almost all $t \in (0,T)$ and satisfies \eqref{eq:uniform_lipschitz}. Then the unique variational solution $u$ to the Cauchy-Dirichlet problem associated with \eqref{eq:pde} and $u_o$ is a weak solution. Further, it is contained in the space of H\"older continuous functions $C^{0;1,1/2} ({\overline \Omega}_T)$.
\end{theorem}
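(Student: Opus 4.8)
The plan is to proceed in two steps: first I would show that the variational solution $u$ provided by Theorem~\ref{thm:main_existence} is a weak solution of \eqref{eq:pde}; then, using that it is a weak solution whose spatial gradient is bounded by \eqref{eq:gradient_bound}, I would establish the $\tfrac12$-Hölder continuity in time. Set $L_0:=\max\{Q,\|Du_o\|_{L^\infty(\Omega,\R^n)}\}$, so that $|Du|\le L_0$ a.e.\ in $\Omega_T$ by \eqref{eq:gradient_bound}. The only use of hypothesis \eqref{eq:uniform_lipschitz} is that it immediately gives $|D_\xi f(t,Du)|\le M_{L_0}$ for a.e.\ $(x,t)\in\Omega_T$, i.e.\ $D_\xi f(\cdot,Du)\in L^\infty(\Omega_T,\R^n)$; this is what makes the weak formulation meaningful, and it is needed again in the second step.

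For the first step, fix $\varphi\in C_0^\infty(\Omega\times(0,T))$. Since a variational solution need not possess an $L^2$-integrable time derivative, I would replace $u$ by its exponential time mollification $[u]_h$ (normalised so that $[u]_h(\cdot,0)=u_o$), which satisfies $D[u]_h=[Du]_h$, $\partial_t[u]_h=\tfrac1h(u-[u]_h)\in L^2(\Omega_T)$, keeps $\|[u]_h\|_{L^\infty}$ and $\|D[u]_h\|_{L^\infty}\le L_0$ bounded, and converges to $u$ with $D[u]_h\to Du$ in $L^2(\Omega_T)$. For $s\in\R$ the map $v:=[u]_h+s\varphi$ is an admissible comparison map in \eqref{eq:var_ineq_unconstrained}; inserting it, expanding $\partial_t v\,(v-u)$, and using $\varphi(\cdot,0)=\varphi(\cdot,T)=0$, $[u]_h(\cdot,0)-u_o=0$, $\partial_t[u]_h\,([u]_h-u)=-\tfrac1h|[u]_h-u|^2\le0$, $-\tfrac12\|([u]_h-u)(T)\|_{L^2(\Omega)}^2\le0$ (the two non-positive contributions on the right-hand side being discarded) together with $\iint\partial_t[u]_h\,\varphi=-\iint[u]_h\,\partial_t\varphi$, I obtain
\[
	\iint_{\Omega_T}f(t,Du)\,\dx\dt \;\le\; \iint_{\Omega_T}\big[-s[u]_h\,\partial_t\varphi + s\,\partial_t\varphi\,([u]_h-u) + s^2\partial_t\varphi\,\varphi + f(t,D[u]_h+sD\varphi)\big]\,\dx\dt .
\]
Because $|D[u]_h+sD\varphi|\le L_0+|s|\|D\varphi\|_{L^\infty}=:L_1$, the last integrand is dominated by a multiple of $\max_{|\xi|\le L_1}|f(\cdot,\xi)|\in L^1(\Omega_T)$ (a consequence of \eqref{eq:integrand}), so I can pass to the limit $h\to0$ along a subsequence with $D[u]_h\to Du$ a.e., obtaining the same inequality with $[u]_h$ replaced by $u$. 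Subtracting $\iint f(t,Du)$, then dividing by $s>0$ and letting $s\downarrow0$ (and separately by $s<0$, $s\uparrow0$) --- the difference quotient $s^{-1}[f(t,Du+sD\varphi)-f(t,Du)]$ being monotone in $s$ by convexity, converging to $D_\xi f(t,Du)\cdot D\varphi$ since $f(t,\cdot)\in C^1$, and dominated in $L^1(\Omega_T)$ thanks to \eqref{eq:uniform_lipschitz} and \eqref{eq:integrand} --- yields both $0\le\iint_{\Omega_T}[-u\,\partial_t\varphi+D_\xi f(t,Du)\cdot D\varphi]\,\dx\dt$ and the reverse inequality, hence equality for all $\varphi\in C_0^\infty(\Omega\times(0,T))$. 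Together with $u=u_o$ on $\partial\Omega\times(0,T)$ and the attainment of the initial datum $u(\cdot,0)=u_o$ (part of the general theory of variational solutions), this shows that $u$ is a weak solution.

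For the second step, I would test the weak equation with $\varphi(x,t)=\zeta(x)\psi(t)$, where $\psi\in C_0^\infty(0,T)$ and $\zeta\in C_0^\infty(B_\rho(x_0))$, $0\le\zeta\le1$, $\zeta\equiv1$ on $B_{\rho/2}(x_0)$, $|D\zeta|\le c/\rho$ (with $c$ a dimensional constant), for fixed $x_0\in\Omega$ and $\rho>0$ with $B_\rho(x_0)\subset\Omega$. This shows that $g(t):=\int_\Omega u(x,t)\zeta(x)\,\dx$ has distributional derivative $g'(t)=-\int_\Omega D_\xi f(t,Du)\cdot D\zeta\,\dx$, whence $|g(t_2)-g(t_1)|\le c\,M_{L_0}\,\rho^{n-1}|t_2-t_1|$ for all $t_1,t_2\in[0,T]$ (using $g\in C^0([0,T])$, which follows from $u\in C^0([0,T];L^2(\Omega))$, and $\int_\Omega|D\zeta|\,\dx\le c\rho^{n-1}$). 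Since for every $t$ the slice $u(\cdot,t)$ has an $L_0$-Lipschitz representative ($\Omega$ being convex; for the exceptional null set of times by $L^2(\Omega)$-continuity in time), $|u(x,t)-u(x_0,t)|\le L_0\rho$ on $B_\rho(x_0)$, so with $\theta:=\int_\Omega\zeta\,\dx\in[c^{-1}\rho^n,c\rho^n]$,
\[
	\theta\,\big|u(x_0,t_2)-u(x_0,t_1)\big|\;\le\;\big|g(t_2)-g(t_1)\big|+2L_0\rho\,\theta ,\qquad\text{hence}\qquad \big|u(x_0,t_2)-u(x_0,t_1)\big|\le \frac{c\,M_{L_0}|t_2-t_1|}{\rho}+2L_0\rho .
\]
Taking $\rho=\sqrt{|t_2-t_1|}$ when $\dist(x_0,\partial\Omega)\ge\sqrt{|t_2-t_1|}$, and otherwise comparing $x_0$ with a closest boundary point --- on which $u$ is time-independent because $u=u_o$ on $\partial\Omega\times(0,T)$ --- and using the spatial Lipschitz bound, gives $|u(x_0,t_2)-u(x_0,t_1)|\le C(n,L_0,M_{L_0})\sqrt{|t_2-t_1|}$ in all cases. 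Combined with \eqref{eq:gradient_bound} this is precisely $u\in C^{0;1,1/2}(\overline{\Omega}_T)$, with a time-Hölder constant independent of $T$, hence also valid when $T=\infty$ (where one argues on each $\Omega_\tau$, $\tau<\infty$).

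I expect the first step to be the main obstacle: one cannot insert $u\pm s\varphi$ directly into \eqref{eq:var_ineq_unconstrained} because $u$ has no $L^2$-time derivative, and the detour through $[u]_h$ requires careful bookkeeping of the boundary terms at $t=0$ and $t=T$ and of the non-positive term $-\tfrac1h\|[u]_h-u\|_{L^2(\Omega_T)}^2$, as well as the interchange of the limits $h\to0$ and $s\to0$. Once the equation and the bound $D_\xi f(\cdot,Du)\in L^\infty(\Omega_T,\R^n)$ are available, the second step is a routine comparison argument.
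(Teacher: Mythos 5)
Your proposal is correct, and the two steps deserve separate comments.

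For the weak Euler--Lagrange equation, your argument coincides in all essentials with the paper's: both insert a perturbation of the exponential time mollification $[u]_h$ into \eqref{eq:var_ineq_unconstrained}, discard the non-positive terms $\partial_t[u]_h([u]_h-u)=-\tfrac1h|[u]_h-u|^2$ and $-\tfrac12\|([u]_h-u)(T)\|_{L^2}^2$, pass $h\downarrow0$ by dominated convergence, and then differentiate at $s=0$ using convexity and \eqref{eq:uniform_lipschitz}. The only cosmetic difference is that you take $v=[u]_h+s\varphi$ whereas the paper uses $v_h=[u]_h+s[\varphi]_h$; your choice is slightly cleaner since $\varphi(T)=0$ kills the boundary term at $T$ exactly, whereas with $[\varphi]_h$ it only vanishes in the limit.

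For the H\"older continuity, your route is genuinely different from the paper's, although the two are close in spirit. The paper establishes the parabolic Poincar\'e inequality \eqref{eq:time_reg 1} for the $L^2$-oscillation of $u$ on cylinders $Q_r$, splitting it into the time oscillation of a weighted spatial average $u_\eta(t)$ (controlled by testing the weak equation with $\eta\psi_\delta$) plus the spatial oscillation (controlled by the spatial Poincar\'e inequality and $\|Du\|_\infty$), and then invokes Da Prato's Campanato-space characterization of parabolic H\"older continuity. You instead estimate the pointwise time oscillation directly: test with $\zeta\psi$ to get $|g(t_2)-g(t_1)|\le c\,M_{L_0}\rho^{n-1}|t_2-t_1|$ for $g(t)=\int_\Omega u\zeta\,\dx$, transfer this to $u(x_0,\cdot)$ using the $L_0$-Lipschitz bound of each time slice, and balance by choosing $\rho=\sqrt{|t_2-t_1|}$, with the boundary case handled via the time-independent trace on $\partial\Omega$. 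The testing step is the same; what differs is that you exploit the pointwise spatial Lipschitz continuity (which is available here) to bypass Campanato theory, yielding a shorter and more self-contained argument with an explicit constant. The paper's route via Da Prato is the more standard one and would survive in settings where $Du$ is merely $L^p$-bounded, but in the present Lipschitz setting your more elementary argument is entirely adequate. Both proofs handle the boundary of $\Omega_T$ correctly --- the paper by intersecting $Q_r$ with $\Omega_T$ and using that $B_r\cap\Omega$ contains a comparable ball (by convexity of $\Omega$), you by comparing with the nearest boundary point where $u$ is time-independent.

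One small point worth making explicit in your write-up: the passage from ``for a.e.\ $t$ the slice $u(\cdot,t)$ is $L_0$-Lipschitz'' to ``for every $t$'' uses $u\in C^0([0,T];L^2(\Omega))$ together with Arzel\`a--Ascoli on the Lipschitz slices; you gesture at this but it should be spelled out when producing the continuous representative of $u$ on $\overline\Omega_T$.
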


To prove Theorem \ref{thm:main_existence}, we may assume without a loss of generality that $T < \infty$, see the beginning of Section  \ref{sec:unconstrained_for_general}.
The proof is divided into three parts. We first assume that the integrand is suitably regular and in particular has a weak derivative with respect to the time variable. Then the method of minimizing movements yields a solution $u$ to the so called \textit{gradient constrained obstacle problem} to \eqref{eq:pde}, where the $L^\infty$-norm of the gradients of the solution and the comparison maps are bounded by a fixed constant $L \in (0, \infty)$. Moreover, the regularity assumption on $f$ ensures that $u$ has a weak time derivative in $L^2(\Omega_T)$.

Next, under the same regularity assumptions on $f$ as in the first step, a standard argument exploiting the bounded slope condition and the maximum principle yields the uniform gradient bound \eqref{eq:gradient_bound} for $u$. 
Choosing $L$ large enough, this in turn allows us to deduce that $u$ is in fact already a solution to the unconstrained problem in the sense of Definition \ref{def:unconstrainded_solution}.

To deal with a general integrand $f$, we consider its Steklov average $f_\varepsilon$. Since $f_\varepsilon$ admits a weak time derivative, by the results mentioned in the preceding paragraph there exists a solution $u_\varepsilon$ to the Cauchy-Dirichlet problem associated with $f_\varepsilon$ in the sense of Definition \ref{def:unconstrainded_solution}.
Moreover, since for each $\varepsilon>0$ the solution $u_\varepsilon$ satisfies the gradient bound \eqref{eq:gradient_bound} and $u_\varepsilon$ = $u_o$ on $\partial\Omega \times (0,T)$, there exists a limit map $u \in L^\infty (\Omega_T)$ such that $u_\varepsilon \rightarrow u$ uniformly and $Du_\varepsilon \rightarrow Du$ weakly$^\ast$ up to a subsequence as $\varepsilon \downarrow 0$. This allows us to conclude that $u$ is a variational solution in the sense of Definition \ref{def:unconstrainded_solution}, finishing the proof of Theorem \ref{thm:main_existence}.

The proof of Theorem \ref{thm:regularity} is similar to the one found in \cite[Chapter 8]{BDMS}. The $C^1$ assumption on the integrand ensures the validity of the weak Euler-Lagrange equation, which lets us apply the argument from \cite[p23-24]{BDMi13} to prove a Poincar\'e inequality for variational solutions. The H\"older continuity then follows from the Campanato space characterization of H\"older continuity by Da Prato \cite{DaPrato}.

The paper is organized as follows. Section \ref{sec:preliminaries} contains preliminary definitions and basic observations about the integrand. In Section \ref{sec:properties} we prove certain properties of variational solutions that are required in later sections, including the comparison and maximum principles. Under additional regularity assumptions on $f$ we use the method of minimizing movements to prove the existence of variational solutions to the gradient constrained problem in Section \ref{sec:existence_for_constrained} and in Section \ref{sec:unconstrained_bsc} we consider the unconstrained problem. Finally, in Section \ref{sec:unconstrained_for_general} we consider general integrands and finish the proof of Theorem \ref{thm:main_existence} and H\"older continuity in time is proven in Section \ref{sec:time} under additional regularity assumptions.

\section{Preliminaries}\label{sec:preliminaries}
\subsection{Notation}
Throughout the paper, for $p \in [1,\infty]$ and $m \in \N$ the space $L^p(\Omega,\R^m)$ denotes the usual Lebesgue space (we omit $\R^m$ if $m=1$) and $W^{1,p}(\Omega)$ and $W^{1,p}_0(\Omega)$ denote the usual Sobolev spaces.
If $\Omega$ is a bounded Lipschitz domain, $W^{1,\infty}(\Omega)$ can be identified with the space $C^{0,1}(\overline{\Omega})$ of functions $v \colon \Omega \to \R$ that are Lipschitz continuous (with Lipschitz constant $[v]_{0,1} = \|Dv\|_{L^\infty(\Omega,\R^n)}$) up to the boundary of $\Omega$.
Note that in particular any convex set has a Lipschitz continuous boundary, since convex functions are locally Lipschitz \cite[Corollary 2.4]{Ekeland-Temam}.
Further, for a Banach space $X$ and an integrability exponent $p \in [1,\infty]$ we write $L^p(0,T;X)$ for the space of Bochner measurable functions $v \colon [0,T] \to X$ with $t \mapsto \|v(t)\|_X \in L^p(0,T)$.
Moreover, $C^0([0,T];X)$ is defined as the space of the continuous functions $v \colon [0,T] \to X$.
For maps $v$ defined in $\Omega_T$ we also use the short notation $v(t)$ for the partial map $x \mapsto v(x,t)$ defined in $\Omega$.
Finally, for a set $A \subset \R^m$, the characteristic function $\chi_A \colon \R^m \to \{0,1\}$ is given by $\chi_A(x) = 1$ if $x \in A$ and $\chi_A(x) = 0$ else.

\subsection{Bounded slope condition}
In the proof of the existence result in Section~\ref{sec:unconstrained_bsc} it is crucial that there exist affine comparison functions below and above the initial/boundary datum $u_o$ coinciding with $u_o$ in a point $x_o \in \partial\Omega$.
This is ensured by applying the following bounded slope condition to $\left. u_o \right|_{\partial\Omega}$.
\begin{definition}
\label{def:bounded_slope}
A function $U \colon \partial\Omega \to \R$ satisfies the \emph{bounded slope condition}
with constant $Q>0$ if for any $x_o \in \partial\Omega$ there exist two affine functions
$w_{x_o}^\pm \colon \R^n \to \R$ with Lipschitz constants $[w_{x_o}^\pm]_{0,1} \leq Q$
such that
$$
	\left\{
	\begin{array}{l}
		\mbox{$w_{x_o}^-(x) \leq U(x) \leq w_{x_o}^+(x)$ for any $x \in \partial\Omega$,} \\[5pt]
		w_{x_o}^-(x_o) = U(x_o) = w_{x_o}^+(x_o).
	\end{array}
	\right.
$$
\end{definition}
Note that unless $U$ itself is affine, the convexity of $\Omega$ is necessary for the bounded slope condition to hold.
Note that even strict convexity of $\Omega$ is not sufficient for general $U$, since the boundary can become ``too flat".
However, we know that for a uniformly convex, bounded $C^2$-domain $\Omega$ and $v \in C^2(\R^n)$ the restriction $U= \left. v \right|_{\partial\Omega}$ fulfills the bounded slope condition.
For more details, we refer to \cite{Giusti,Miranda}.
On the other hand, in the parabolic setting the following example is relevant: Consider a convex domain $\Omega$ with flat parts (such as a rectangle) and a Lipschitz continuous function $u_o$ that vanishes at the boundary of $\Omega$; i.e.~we prescribe zero lateral boundary values, but the initial datum is not necessarily identical to zero.

We need the following lemma from \cite[Lemma 2.3]{BDMS}. It states that if $u_o$ is Lipschitz and $u_o|_{\partial\Omega}$ satisfies the bounded slope condition, then $u_o$ can be squeezed between two affine functions that touch $u_o$ at a given boundary boundary point and the Lipschitz constant of these affine functions is bounded by either the Lipschitz constant of $u_o$ or the constant in the bounded slope condition.
\begin{lemma}
\label{lem:bounded_slope}
Let $u_o \in C^{0,1}(\overline{\Omega})$ with Lipschitz constant $[u_o]_{0,1} \leq Q_1$ such that the restriction $U := \left. u_o \right|_{\partial\Omega}$ satisfies the bounded slope condition with constant $Q_2$.
Then for any $x_o \in \partial\Omega$ there exist two affine functions $w_{x_o}^\pm$ with $[w_{x_o}^\pm]_{0,1} \leq \max\{Q_1, Q_2\}$ such that
$$
	\left\{
	\begin{array}{l}
		\mbox{$w_{x_o}^-(x) \leq u_o(x) \leq w_{x_o}^+(x)$ for any $x \in \overline{\Omega}$,} \\[5pt]
		w_{x_o}^-(x_o) = u_o(x_o) = w_{x_o}^+(x_o).
	\end{array}
	\right.
$$
\end{lemma}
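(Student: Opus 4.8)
The plan is to handle the upper affine function $w_{x_o}^+$; the lower one $w_{x_o}^-$ is obtained by applying the same argument to $-u_o$ and negating. Fix $x_o \in \partial\Omega$. By the bounded slope condition applied to $U = u_o|_{\partial\Omega}$, there is an affine function $\tilde{w}^+ \colon \R^n \to \R$ with $[\tilde{w}^+]_{0,1} \leq Q_2$ such that $U(x) \leq \tilde{w}^+(x)$ for all $x \in \partial\Omega$ and $\tilde{w}^+(x_o) = U(x_o) = u_o(x_o)$. If it already happens that $[\tilde{w}^+]_{0,1} \geq Q_1$, I claim $\tilde{w}^+$ works as $w_{x_o}^+$ on all of $\overline{\Omega}$: indeed, consider the function $h := u_o - \tilde{w}^+$ on $\overline{\Omega}$. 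It is Lipschitz (hence continuous), vanishes at $x_o$, and is $\leq 0$ on $\partial\Omega$; but it need not be $\leq 0$ inside. This is precisely where more care is needed, so let me organize the argument differently.

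The cleaner route is to argue pointwise. Fix $x \in \overline{\Omega}$, $x \neq x_o$. Consider the ray from $x_o$ through $x$ and let $x_1 \in \partial\Omega$ be the point where this ray exits $\Omega$ (it exists and is unique by convexity and boundedness of $\Omega$, since $x_o \in \partial\Omega$); then $x = (1-\lambda)x_o + \lambda x_1$ for some $\lambda \in (0,1]$. I want to show $u_o(x) \leq w_{x_o}^+(x)$ for a suitable affine $w_{x_o}^+$ with slope at most $\max\{Q_1,Q_2\}$. Using the Lipschitz bound on $u_o$ along the segment from $x$ to $x_1$ and the bounded-slope bound at $x_1$: on one hand $u_o(x) \leq u_o(x_1) + Q_1 |x - x_1| = u_o(x_1) + Q_1(1-\lambda)|x_o - x_1|$, and on the other the affine function $\tilde{w}^+$ from the bounded slope condition satisfies $u_o(x_1) \leq \tilde{w}^+(x_1)$ with $\tilde{w}^+(x_o) = u_o(x_o)$, so $u_o(x_1) \leq u_o(x_o) + Q_2\lambda|x_o - x_1|$ — wait, this mixes the two estimates and the resulting bound on $u_o(x)$ in terms of $u_o(x_o)$ is $u_o(x_o) + [Q_2\lambda + Q_1(1-\lambda)]|x_o-x_1| \geq u_o(x_o) + \min\{Q_1,Q_2\}|x_o-x_1|$ which is the wrong direction and gives no single affine majorant. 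So the pointwise convex-combination trick alone is not enough; the honest approach is the one in \cite{BDMS}, which I recall below.

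Following \cite[Lemma 2.3]{BDMS}: define $w_{x_o}^+ \colon \R^n \to \R$ by taking the affine function agreeing with $\tilde{w}^+$ but, if necessary, steepening it. Precisely, observe first that $u_o(x) \leq u_o(x_o) + Q_1|x - x_o|$ for all $x \in \overline{\Omega}$ by the global Lipschitz bound, and $\tilde{w}^+(x) = u_o(x_o) + \langle a, x - x_o\rangle$ with $|a| \leq Q_2$. If $|a| \geq Q_1$ one checks directly that for $x \in \partial\Omega$ we have the inequality from the BSC, while for interior $x$ we write $x$ as a convex combination of $x_o$ and a boundary point $x_1$ as above and use that an affine function restricted to a segment is the corresponding convex combination of its endpoint values together with $u_o(x) \leq (1-\lambda)u_o(x_o) + \lambda u_o(x_1) + C\lambda(1-\lambda)|x_o - x_1|$ — no; rather, the correct and standard observation is simply: the BSC affine function $\tilde w^+$ already dominates $u_o$ on $\partial\Omega$ and agrees at $x_o$, and the \emph{only} issue is interior points; replacing $\tilde w^+$ by $\max\{\tilde w^+, \text{(the }Q_1\text{-Lipschitz cone over }u_o\text{ from }x_o)\}$ does not stay affine. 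The genuine argument in \cite{BDMS} instead shows that one may simply \emph{take} $w_{x_o}^+ := \tilde w^+$ when $Q_2 \geq Q_1$, because then $\tilde w^+ - u_o$ is a function which is $\geq 0$ on $\partial\Omega$, vanishes at $x_o$, and ... one must use convexity of $\Omega$ together with $[u_o]_{0,1}\le Q_1 \le Q_2 = [\tilde w^+]_{0,1}$ via a separating-hyperplane / support argument to push the inequality inside; and when $Q_1 > Q_2$ one replaces $\tilde w^+$ by the affine function through $(x_o, u_o(x_o))$ with gradient $a + (Q_1 - Q_2)\nu$ where $\nu$ is the outward unit normal direction, exploiting that $\Omega$ lies on one side of the supporting hyperplane at $x_o$ so the added linear term is $\leq 0$ on $\overline\Omega$ and $=0$ at $x_o$, while the steepened slope now controls $u_o$ in the tangential directions. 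I expect the main obstacle to be exactly this case distinction and the verification, via the supporting hyperplane of the convex set $\Omega$ at $x_o$, that steepening the BSC affine function in the normal direction preserves the one-sided bound on all of $\overline{\Omega}$; I would carry it out by: (i) writing the supporting hyperplane $\langle \nu, x - x_o\rangle \leq 0$ for $x \in \overline\Omega$, (ii) setting $w_{x_o}^+(x) := u_o(x_o) + \langle a, x - x_o\rangle + \max\{0, Q_1 - |a|\}\,\langle \nu, x - x_o\rangle$, checking $[w_{x_o}^+]_{0,1} \leq \max\{Q_1,Q_2\}$, (iii) verifying the inequality on $\overline\Omega$ by reducing to boundary points via convexity and the Lipschitz bound on $u_o$, and (iv) repeating for $w_{x_o}^-$ by symmetry.
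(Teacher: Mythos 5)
The paper does not prove this lemma; it cites it as \cite[Lemma 2.3]{BDMS}. Your proposal correctly identifies the overall strategy (start from the BSC affine function at $x_o$, handle only $w^+_{x_o}$ and get $w^-_{x_o}$ by symmetry, modify by adding a linear term in the normal direction at $x_o$), but the final construction has a sign error that is fatal, and the key interior step is never actually carried out. Let me be concrete.

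\emph{The normal direction is wrong.} You take $\nu$ to be the \emph{outward} normal, so $\langle \nu, x-x_o\rangle \leq 0$ on $\overline\Omega$, and then you \emph{add} $\max\{0, Q_1 - |a|\}\langle\nu, x-x_o\rangle$ to $\tilde w^+$. This added term is $\leq 0$, so $w^+_{x_o} \leq \tilde w^+$ on $\overline\Omega$; the one piece of information the BSC gives you --- that $\tilde w^+ \geq u_o$ on $\partial\Omega$ --- is thereby destroyed, and there is no way to recover the inequality for interior points either. A concrete failure: take $\Omega = (-1,1)\subset\R$, $u_o(x)=|x|$ (so $Q_1=1$), $x_o=-1$; the constant function $\tilde w^+\equiv 1$ verifies the BSC at $x_o$ with $Q_2=0$, $a_0=0$. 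The outward normal at $x_o$ is $\nu=-1$, so your formula gives $w^+_{x_o}(x) = 1 + (0 + 1\cdot(-1))(x+1) = -x$, and $w^+_{x_o}(1/2) = -1/2 < 1/2 = u_o(1/2)$. The correct modification uses the \emph{inward} normal $\nu_{\rm in}$ (for which $\langle \nu_{\rm in}, x-x_o\rangle \geq 0$ on $\overline\Omega$); this increases $w^+_{x_o}$, so domination on $\partial\Omega$ is preserved.

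\emph{The interior step is missing.} Even with the sign fixed, you still need to prove $w^+_{x_o} \geq u_o$ in the interior, and this is where the Lipschitz constant $Q_1$ of $u_o$ and the supporting-hyperplane structure actually enter. The argument is a short ray argument: choose $c\geq 0$ so that $a := a_0 + c\nu_{\rm in}$ satisfies $|a| = \max\{Q_1,|a_0|\}$ (possible by the intermediate value theorem, since $c\mapsto|a_0+c\nu_{\rm in}|$ is continuous, equals $|a_0|$ at $c=0$, and tends to $\infty$). Then $|a|\geq Q_1$, and for any $x\in\Omega$ the ray from $x$ in direction $-a$ meets $\partial\Omega$ at some $y_1$, giving
$$
	w^+_{x_o}(x) = w^+_{x_o}(y_1) + |a|\,|x-y_1| \geq u_o(y_1) + Q_1|x-y_1| \geq u_o(x),
$$
while $[w^+_{x_o}]_{0,1} = |a| = \max\{Q_1,|a_0|\}\leq \max\{Q_1,Q_2\}$.

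\emph{Two smaller points.} The case distinction should be on $|a_0|$ versus $Q_1$, not on $Q_2$ versus $Q_1$: the BSC only gives $|a_0|\leq Q_2$, and indeed even when $Q_2\geq Q_1$ the BSC may hand you a $\tilde w^+$ with $|a_0|<Q_1$ that fails to dominate $u_o$ inside (as the counterexample above shows). Also, the explicit choice $c=\max\{0,Q_1-|a_0|\}$ does not in general give $|a_0+c\nu_{\rm in}|=Q_1$, since the triangle inequality need not be an equality; this is why the intermediate value argument above is used instead.

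Finally, as a stylistic note, the proposal contains several false starts left in place (``so the pointwise convex-combination trick alone is not enough'', ``no; rather\dots''). These should be excised in a final write-up; the mathematically relevant content is the last paragraph, and that is the part with the gaps identified above.
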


\subsection{Dominating functions for the integrand}
\label{sec:convex}
Observe that for any $L>0$ the map $t \mapsto \max_{|\xi| \leq L} f(t,\xi)$ is measurable, since we have that $\max_{|\xi| \leq L} f(t,\xi) = \max_{\xi \in B_L(0) \cap \mathds{Q}^n} f(t, \xi)$ and the maximum of countably many measurable functions is measurable.
The same holds true for $t \mapsto \min_{|\xi| \leq L} f(t,\xi)$.
In the following lemma, we show that they are contained in $L^1(0,T)$.
\begin{lemma}
\label{lem:equiL1}
Let $T\in(0, \infty)$ and assume that $f \colon [0,T] \times \R^n \to \R$ satisfies \eqref{eq:integrand}.
Then, for any $L>0$ there exists a function $g_L \in L^1(0,T)$ such that
\begin{equation}
	|f(t,\xi)| \leq g_L(t)
	\quad \text{for all $t \in (0,T)$ and $\xi \in B_L(0)$.}
	\label{eq:dominating_f}
\end{equation}
\end{lemma}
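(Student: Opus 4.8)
The plan is to use the convexity of $\xi \mapsto f(t,\xi)$ to dominate the entire oscillation of $f(t,\cdot)$ over the ball $B_L(0)$ by the values of $f$ at \emph{finitely many fixed points}, each of which is an $L^1$-in-time function by hypothesis \eqref{eq:integrand}. Since the set of $t \in [0,T]$ for which $\xi \mapsto f(t,\xi)$ fails to be convex is Lebesgue-null, we may first modify $f$ on this null set (which changes neither \eqref{eq:integrand} nor any of the integral quantities appearing in the paper) and thereby assume that $\xi \mapsto f(t,\xi)$ is convex for \emph{every} $t \in [0,T]$.

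For the upper bound, I would observe that $B_L(0)$ is contained in the cube $[-L,L]^n$, which is the convex hull of the $2^n$ vertices $V_L := \{-L,L\}^n$. Hence convexity gives
\[
	\max_{|\xi| \le L} f(t,\xi) \le \max_{\xi \in [-L,L]^n} f(t,\xi) = \max_{v \in V_L} f(t,v) =: h_L(t),
\]
and since each $t \mapsto f(t,v)$, $v \in V_L$, lies in $L^1(0,T)$ by \eqref{eq:integrand} (recall $T < \infty$), the finite maximum $h_L$ belongs to $L^1(0,T)$ as well. For the lower bound I would use convexity at the midpoint of $\xi$ and $-\xi$: for $\xi \in B_L(0)$,
\[
	f(t,0) \le \tfrac12 f(t,\xi) + \tfrac12 f(t,-\xi),
	\quad\text{hence}\quad
	f(t,\xi) \ge 2 f(t,0) - f(t,-\xi) \ge 2 f(t,0) - h_L(t),
\]
where the last step uses the upper bound applied to $-\xi \in B_L(0)$. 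As $t \mapsto f(t,0) \in L^1(0,T)$, the right-hand side is in $L^1(0,T)$. Combining the two estimates, the function $g_L(t) := |h_L(t)| + 2|f(t,0)|$ lies in $L^1(0,T)$ and satisfies $-g_L(t) \le f(t,\xi) \le g_L(t)$ for all $t \in (0,T)$ and $\xi \in B_L(0)$, which is \eqref{eq:dominating_f}.

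There is no substantial obstacle here; the argument is elementary. The only points requiring a little care are the bookkeeping around the almost-everywhere convexity (handled by the modification on a null set above, or equivalently by restricting attention to the complement of that set), and the direction of the convexity estimate: convexity directly controls $f$ from above on a polytope containing $B_L(0)$, while the lower bound needs the extra ``reflection through the origin'' step to convert that into a two-sided bound.
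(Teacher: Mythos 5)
Your proof is correct, and the lower bound is argued by a genuinely different—and considerably simpler—route than the paper's. For the upper bound your approach is essentially the paper's: you enclose $B_L(0)$ in a polytope (the cube $[-L,L]^n$ with its $2^n$ vertices) and use that a convex function on a polytope attains its maximum at a vertex; the paper does the same with a simplex $\Delta \supset B_L(0)$ with $n+1$ vertices, which keeps the count of dominating functions linear in $n$, but since $T<\infty$ either choice lands in $L^1(0,T)$. For the lower bound, however, the paper runs a more elaborate argument: it picks a subgradient $\eta$ of $f(t,\cdot)$ at the maximizer $\xi_{\max}\in B_L(0)$, shows $\cos(\alpha)|\eta|\le -C$ with $C=\tfrac1{2L}(\max-\min)$, then follows the ray from $\xi_{\min}$ through $\xi_{\max}$ out to $\partial B_{L+1}(0)$ to get
\[
(2L+1)\max_{|\xi|\le L}f(t,\xi)-2L\max_{|\xi|\le L+1}f(t,\xi)\ \le\ \min_{|\xi|\le L}f(t,\xi),
\]
so the lower envelope is controlled using the $L^1$ bound on the max over the \emph{larger} ball $B_{L+1}(0)$. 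Your reflection-through-the-origin step $f(t,0)\le\tfrac12 f(t,\xi)+\tfrac12 f(t,-\xi)$ achieves the same control while staying entirely inside $B_L(0)$ and never touching subdifferentials, which is both shorter and more elementary. Your remark on the a.e.\ convexity (modifying $f$ on a null set, or equivalently stating the bound for a.e.\ $t$) is the right way to reconcile \eqref{eq:integrand}$_1$ with the ``for all $t$'' phrasing in \eqref{eq:dominating_f}; the paper implicitly does the same.
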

\begin{proof}
First, we show that for any $L>0$, we have that
\begin{equation}
	t \mapsto \max_{|\xi| \leq L} f(t,\xi) \in L^1(0,T).
	\label{eq:max_aux}
\end{equation}
To this end, fix $\xi_1, \ldots, \xi_{n+1} \in \R^n$ such that the closed ball $B_L(0)$ is a subset of the simplex
$$
	\Delta :=
	\left\{ \xi \in \R^n : \xi = \sum_{i=1}^{n+1} \lambda_i \xi_i \text{ with } 0 \leq \lambda_i \leq 1, i=1,\ldots,n+1, \sum_{i=1}^{n+1} \lambda_i = 1 \right\}.
$$
Note that for any $t \in [0,T]$ such that $\R^n \ni \xi \mapsto f(t,\xi)$ is convex,  the mapping $\xi \mapsto f(t,\xi)$ attains its maximum in one of the points $\xi_1, \ldots, \xi_{n+1}$.
Hence, for a.e.~$t$ we obtain that
\begin{align*}
	f(t,0)
	&\leq
	\max_{|\xi| \leq L} f(t,\xi)
	\leq
	\sum_{i=1}^{n+1} |f(t,\xi_i)|.
\end{align*}

Since the maps $t \mapsto f(t,0)$ and $t \mapsto f(t,\xi_i)$, $i=1,\ldots,n+1$, belong to $L^1(0,T)$ by \eqref{eq:integrand}$_2$, this implies \eqref{eq:max_aux}.

Next, we fix $L>0$ and prove
\begin{equation}
	t \mapsto \min_{|\xi| \leq L} f(t,\xi) \in L^1(0,T).
	\label{eq:min_aux}
\end{equation}
Consider $t \in [0,T]$ such that $\xi \mapsto f(t,\xi)$ is convex.
Then, there exist $\xi_{min}, \xi_{max} \in B_L(0)$ such that $f(t,\xi_{min}) = \min_{|\xi| \leq L} f(t,\xi)$ and $f(t,\xi_{max}) = \max_{|\xi| \leq L} f(t,\xi)$.
Assume that $\xi_{min} \neq \xi_{max}$ (otherwise, $\xi \mapsto f(t,\xi)$ is constant in $B_L(0)$ and thus $f(t,0) = \min_{|\xi|\leq L)} f(t, \xi)$).
First, note that for $C := \tfrac{1}{2L} (f(t,\xi_{max}) - f(t,\xi_{min})) \in (0,\infty)$, we find that
$$
	f(t,\xi_{min})
	\leq
	f(t,\xi_{max}) - C |\xi_{max} - \xi_{min}|.
$$
Furthermore, since $\xi \mapsto f(t,\xi)$ is convex in $\R^n$, its subdifferential at $\xi_{max}$ is non-empty \cite[Proposition 5.2]{Ekeland-Temam}, i.e.~there exists $\eta = \eta(\xi_{max}) \in \R^n$ such that
$$
	f(t,\xi)
	\geq
	f(t,\xi_{max}) + \eta \cdot (\xi - \xi_{max})
$$
for any $\xi \in \R^n$.
In particular, we have that
$$
	f(t,\xi_{min})
	\geq
	f(t,\xi_{max}) + \eta \cdot (\xi_{min} - \xi_{max})
	=
	f(t,\xi_{max}) + \cos(\alpha) |\eta| |\xi_{min} - \xi_{max}|,
$$
where $\alpha$ denotes the angle between $\eta$ and $\xi_{min} - \xi_{max}$.
Together, the preceding two inequalities imply that
$$
	\cos(\alpha) |\eta| \leq -C.
$$
Next, choose $s>1$ such that $\xi_o := \xi_{min} + s(\xi_{max} - \xi_{min}) \in \partial B_{L+1}(0)$.
Note that the vector $\xi_o - \xi_{max} = (1-s)(\xi_{min} - \xi_{max})$ points in the opposite direction as $\xi_{min} - \xi_{max}$.
Therefore, the angle between $\eta$ and $\xi_o - \xi_{max}$ is $\pi - \alpha$.
Using the facts that $\cos(\pi - \alpha) = -\cos(\alpha)$ and $|\xi_o - \xi_{max}| \geq 1$, the preceding inequality and the definition of $C$, we conclude that
\begin{align*}
	\max_{|\xi| \leq L+1} f(t,\xi)
	\geq
	f(t,\xi_o)
	&\geq
	f(t,\xi_{max}) + \eta \cdot (\xi_o - \xi_{max}) \\
	&=
	f(t,\xi_{max}) - \cos(\alpha) |\eta| |\xi_o - \xi_{max}| \\
	&\geq
	f(t,\xi_{max}) + C \\
	&=
	\max_{|\xi| \leq L} f(t,\xi) + \tfrac{1}{2L} (\max_{|\xi| \leq L} f(t,\xi)) - \min_{|\xi| \leq L} f(t,\xi))).
\end{align*}
This is equivalent to
$$
	(2L+1) \max_{|\xi| \leq L} f(t,\xi) - 2L \max_{|\xi| \leq L+1} f(t,\xi)
	\leq	
	\min_{|\xi| \leq L} f(t,\xi)
	\leq
	\max_{|\xi| \leq L} f(t,\xi),
$$
which holds for almost every $t \in [0, T]$. Since we have already shown that $t \mapsto \max_{|\xi| \leq L} f(t,\xi)$ and $t \mapsto \max_{|\xi| \leq L+1} f(t,\xi)$ are contained in $L^1(0,T)$, the preceding inequality proves \eqref{eq:min_aux}.
The claim of Lemma \ref{lem:equiL1} follows by combining \eqref{eq:max_aux} and \eqref{eq:min_aux}.
\end{proof}

\subsection{Lower semicontinuity}
In the course of the paper we will need Le following result on the lower semicontinuity of integrals involving $f$ with respect to the weak$^\ast$ topology of $L^\infty(\Omega_T,\R^n)$.

\begin{lemma}
\label{lem:lower_semicontinuity}
Let $\Omega \subset \R^n$ be a bounded open set and $0<T<\infty$.
Assume that $f \colon [0,T] \times \R^n \to \R$ satisfies \eqref{eq:integrand}.
Then, for any sequence $(V_i)_{i \in \N} \subset L^\infty(\Omega_T,\R^n)$ and $V \in L^\infty(\Omega_T,\R^n)$ such that $V_i \wsto V$ weakly$^\ast$ in $L^\infty(\Omega_T,\R^n)$ as $i \to \infty$ we have that
$$
	\iint_{\Omega_T} f(t,V) \,\dx\dt
	\leq
	\liminf_{i \to \infty} \iint_{\Omega_T} f(t,V_i) \,\dx\dt.
$$
\end{lemma}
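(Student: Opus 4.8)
The plan is to reduce the weak$^*$ lower semicontinuity statement to a classical convexity-based lower semicontinuity theorem for integral functionals. The key obstacle is the mere $L^1$-in-time integrability of $f$: we cannot appeal to a textbook theorem that assumes $f$ is (locally) bounded or continuous jointly in $(t,\xi)$, and we cannot test against all of $L^\infty$ uniformly. However, by Lemma~\ref{lem:equiL1}, if $L := \sup_i \|V_i\|_{L^\infty(\Omega_T,\R^n)}$ (which is finite since weak$^*$ convergent sequences are bounded) and we note that $\|V\|_{L^\infty} \le L$ as well by weak$^*$ lower semicontinuity of the norm, then $|f(t,\xi)| \le g_L(t)$ for all $\xi \in B_L(0)$ with $g_L \in L^1(0,T)$. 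Thus we are in the situation of an integrand that is convex in $\xi$, measurable in $t$ (Carath\'eodory, as remarked after \eqref{eq:integrand}), and dominated on the relevant range by an $L^1(0,T)$ function independent of $x$; this is precisely the hypothesis class for the De Giorgi--Ioffe-type lower semicontinuity theorem (see e.g.\ \cite{Ekeland-Temam}).

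First I would fix $L$ as above and observe that all $V_i$ and $V$ take values in $\overline{B_L(0)}$ a.e.\ in $\Omega_T$. Next I would replace $f$ by the truncated integrand $\tilde f(t,\xi) := f\!\left(t, \pi_L(\xi)\right)$, where $\pi_L$ denotes the nearest-point projection onto $\overline{B_L(0)}$; since $\pi_L$ is $1$-Lipschitz and $f(t,\cdot)$ is convex, $\tilde f(t,\cdot)$ is still convex, it is Carath\'eodory, it now satisfies the global bound $|\tilde f(t,\xi)| \le g_L(t)$ for \emph{all} $\xi \in \R^n$, and it agrees with $f$ on the range of the $V_i$ and of $V$, so the two integral functionals coincide on these maps. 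For such a globally $L^1(0,T)$-dominated Carath\'eodory integrand, convex in $\xi$, the functional $W \mapsto \iint_{\Omega_T} \tilde f(t,W)\,\dx\dt$ is sequentially lower semicontinuous with respect to weak$^*$ convergence in $L^\infty(\Omega_T,\R^n)$ — indeed weak$^*$ convergence in $L^\infty$ implies weak convergence in $L^1(\Omega_T,\R^n)$ (on the bounded domain $\Omega_T$), and lower semicontinuity of convex integral functionals along $L^1$-weakly convergent sequences is the classical result, whose proof goes via Mazur's lemma to pass to convex combinations converging strongly, then Fatou along a subsequence converging a.e., using the domination $g_L$ to control the negative parts.

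I would therefore structure the proof as: (i) extract $L$ and record $V, V_i \in \overline{B_L(0)}$ a.e.; (ii) invoke Lemma~\ref{lem:equiL1} to get $g_L \in L^1(0,T)$; (iii) introduce the truncation $\tilde f$ and check it is Carath\'eodory, convex in $\xi$, and globally dominated by $g_L$; (iv) note $\iint \tilde f(t,V_i) = \iint f(t,V_i)$ and likewise for $V$; (v) apply the classical weak lower semicontinuity theorem for convex integrands (via Mazur + Fatou, or by direct citation of \cite[Ch.~VIII, Cor.~2.1]{Ekeland-Temam} suitably adapted, noting $L^\infty$-weak$^*$ $\Rightarrow$ $L^1$-weak here) to conclude $\iint \tilde f(t,V) \le \liminf_i \iint \tilde f(t,V_i)$, which is the desired inequality. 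The main obstacle is genuinely just step (iii)--(v): ensuring that the low regularity in $t$ (no continuity, no differentiability) is harmless, which the $L^1$-domination $g_L$ provided by Lemma~\ref{lem:equiL1} and the Carath\'eodory property together guarantee, so that the standard Mazur-lemma-plus-Fatou argument applies verbatim on the bounded cylinder $\Omega_T$.
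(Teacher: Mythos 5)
Your overall strategy (reduce to the case of a fixed ball $\overline{B_L(0)}$ containing all the values, use the domination $g_L$ from Lemma~\ref{lem:equiL1}, and then exploit convexity to pass from strong to weak lower semicontinuity via Mazur/Fatou or the Ekeland--Temam device) is essentially the same mechanism as the paper's proof. The paper sets the functional to $+\infty$ outside the convex constraint set $\{W:\|W\|_{L^\infty}\leq M\}$ and then cites \cite[Corollary 2.2]{Ekeland-Temam}; you instead try to truncate the integrand. But your truncation step contains a genuine error.

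The claim that $\tilde f(t,\cdot)=f(t,\pi_L(\cdot))$ is convex is false: composing a convex function with a $1$-Lipschitz map (even a metric projection onto a convex set) does \emph{not} in general preserve convexity. Concretely, with $n=1$, $f(\xi)=|\xi|$ and $L=1$ one gets
$$
	\tilde f(\xi)=|\pi_1(\xi)|=\min(|\xi|,1),
$$
which fails convexity: $\tilde f(1)=1>\tfrac12\big(\tilde f(0)+\tilde f(2)\big)=\tfrac12$. So $\tilde f$ is not an admissible integrand for a black-box convex lower semicontinuity theorem as you propose in step (v). This gap is repairable, and in two natural ways. First, you do not actually need the truncation: in the Mazur--Fatou argument, every convex combination of the $V_i$ and the limit $V$ takes values in $\overline{B_L(0)}$, where $f(t,\cdot)$ \emph{is} convex and dominated by $g_L$, so you can run Mazur plus Fatou directly on $f$ without ever evaluating it outside the ball. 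Second, you can follow the paper and encode the constraint at the level of the functional rather than the integrand, defining $F[W]=\iint f(t,W)\,\dx\dt$ when $\|W\|_{L^\infty}\leq L$ and $F[W]=+\infty$ otherwise; then $F$ is proper, convex, strongly lower semicontinuous in $L^2$ (by dominated convergence using $g_L$), hence weakly lower semicontinuous, and on bounded $\Omega_T$ weak$^*$ convergence in $L^\infty$ implies weak convergence in $L^2$. Either repair works; as written, the convexity claim for $f\circ\pi_L$ does not.
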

\begin{proof}
Consider an arbitrary sequence $(V_i)_{i \in \N} \subset L^\infty(\Omega_T,\R^n)$ and a limit map $V \in L^\infty(\Omega_T,\R^n)$ such that $V_i \wsto V$ weakly$^\ast$ in $L^\infty(\Omega_T,\R^n)$ as $i \to \infty$.
First, note that $(V_i)_{i \in \N}$ is bounded in $L^\infty(\Omega_T,\R^n)$ and set $M := \sup_{i \in \N} \| V_i \|_{L^\infty(\Omega_T,\R^n)} \geq \| V \|_{L^\infty(\Omega_T,\R^n)}$.
We find that
$$
	C := \{ W \in L^2(\Omega_T,\R^n) : \| W \|_{L^\infty(\Omega_T,\R^n)} \leq M \}
$$
is a convex subset of $L^2(\Omega_T,\R^n)$.
Therefore, since $\xi \mapsto f(t,\xi)$ is convex for a.e.~$t \in [0,T]$ and since $\iint_{\Omega_T} f(t,W) \,\dx\dt$ is finite for any $W \in C$ by \eqref{eq:dominating_f}, we obtain that the functional $F \colon L^2(\Omega_T,\R^n) \to (-\infty,\infty]$ given by
$$
	F[W] :=
	\left\{
	\begin{array}{ll}
		\iint_{\Omega_T} f(t,W) \,\dx\dt
		&\text{if } W \in C, \\[5pt]
		\infty
		&\text{else}
	\end{array}
	\right.
$$
is proper and convex.
Further, $F$ is lower semicontinuous with respect to the norm topology in $L^2(\Omega_T,\R^n)$.
Indeed, assume that the sequence $(W_i)_{i \in \N} \subset L^2(\Omega_T,\R^n)$ converges strongly in $L^2(\Omega_T,\R^n)$ to a limit map $W \in L^2(\Omega_T,\R^n)$ as $i \to \infty$.
If $\liminf_{i \to \infty} F[W_i] = \infty$, the assertion $F[W] \leq \liminf_{i \to \infty} F[W_i]$ holds trivially.
Otherwise, there exists a subsequence $\mathfrak{K} \subset \N$ such that $W_i \in C$ for any $i \in \mathfrak{K}$, $\liminf_{i \to \infty} F[W_i] = \lim_{\mathfrak{K} \ni i \to \infty} F[W_i]$
and $W_i \to W$ a.e.~in $\Omega_T$ as $\mathfrak{K} \ni i \to \infty$.
By \eqref{eq:dominating_f} and the dominated convergence theorem, we conclude that $F[W] = \lim_{\mathfrak{K} \ni i \to \infty} F[W_i] = \liminf_{i \to \infty} F[W_i]$.
Therefore, $F$ is also lower semicontinuous with respect to the weak topology in $L^2(\Omega_T,\R^n)$, cf.~\cite[Corollary 2.2]{Ekeland-Temam}.
Since $\Omega_T$ is bounded, we have that $V_i \wto V$ weakly in $L^2(\Omega_T,\R^n)$ as $i \to \infty$ and hence
$$
	\iint_{\Omega_T} f(t,V) \,\dx\dt
	=
	F[V]
	\leq
	\liminf_{i \to \infty} F[V_i]
	=
	\liminf_{i \to \infty} \iint_{\Omega_T} f(t,V_i) \,\dx\dt.
$$
This concludes the proof of the lemma.
\end{proof}

\subsection{Steklov averages of the integrand}
For the final approximation argument in the proof of Theorem \ref{thm:main_existence} we need to regularize the integrand $f$ with respect to time.
To this end, extend $f$ to $[0,\infty] \times \R^n$ by zero if $T<\infty$.
For $\epsilon>0$ define the \emph{Steklov average} $f_\epsilon \colon [0,T] \times \R^n \to \R$ of the extended integrand by
\begin{equation}
	f_\epsilon(t,\xi) := \bint_t^{t+\epsilon} f(s,\xi) \,\ds.
	\label{eq:Steklov}
\end{equation}
In order to investigate convergence of the Steklov averages as $\epsilon \downarrow 0$, first note that specializing the proof of \cite[Corollary 2.4]{Ekeland-Temam} gives us the following result.
\begin{lemma}
\label{lem:quantitative_lipschitz}
Let $L>0$ and assume that $f \colon \R^n \to \R$ is a convex function 
with $\|f\|_{L^\infty(B_{L+1}(0))} \leq C$.
Then, $f$ satisfies the local Lipschitz continuity condition
$$
	|f(\xi_1) - f(\xi_2)| \leq 2C |\xi_1 - \xi_2|
	\quad \text{for all } \xi_1, \xi_2 \in B_L(0).
$$
\end{lemma}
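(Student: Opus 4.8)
The plan is to reduce the estimate to a single application of the definition of convexity along a well-chosen line segment, which is exactly the specialization of the proof of \cite[Corollary 2.4]{Ekeland-Temam} referred to above. Fix $\xi_1, \xi_2 \in B_L(0)$. If $\xi_1 = \xi_2$ there is nothing to prove, so assume $\xi_1 \neq \xi_2$ and set $e := (\xi_1 - \xi_2)/|\xi_1 - \xi_2|$. I would then introduce the auxiliary point $\xi_3 := \xi_1 + e$, which lies on the ray from $\xi_2$ through $\xi_1$ at distance $1$ beyond $\xi_1$; since $|\xi_1| \leq L$, the triangle inequality gives $|\xi_3| \leq L + 1$, so that $|f(\xi_3)| \leq C$ by hypothesis. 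This is the only place where the extra unit of radius in the assumption $\|f\|_{L^\infty(B_{L+1}(0))} \leq C$ is used.

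Next, observe that $\xi_1$ lies on the segment $[\xi_2, \xi_3]$: more precisely $\xi_1 = (1 - \lambda)\xi_2 + \lambda \xi_3$ with $\lambda = |\xi_1 - \xi_2| / |\xi_3 - \xi_2| = |\xi_1 - \xi_2| / (|\xi_1 - \xi_2| + 1) \in (0,1)$. Convexity of $f$ then yields $f(\xi_1) \leq (1-\lambda) f(\xi_2) + \lambda f(\xi_3)$, that is, $f(\xi_1) - f(\xi_2) \leq \lambda\, (f(\xi_3) - f(\xi_2))$. Since $\xi_2, \xi_3 \in B_{L+1}(0)$ we have $f(\xi_3) - f(\xi_2) \leq 2C$, and since the closed-form expression for $\lambda$ gives $\lambda \leq |\xi_1 - \xi_2|$, this yields $f(\xi_1) - f(\xi_2) \leq 2C\, |\xi_1 - \xi_2|$.

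Finally, the roles of $\xi_1$ and $\xi_2$ are symmetric, so running the same argument with $\xi_1$ and $\xi_2$ interchanged produces $f(\xi_2) - f(\xi_1) \leq 2C\, |\xi_1 - \xi_2|$, and combining the two one-sided bounds gives the assertion. There is no genuine obstacle here; the only points needing a moment's care are the separate treatment of the degenerate case $\xi_1 = \xi_2$ (where $e$ is undefined) and the verification that the auxiliary point $\xi_3$ remains inside $B_{L+1}(0)$.
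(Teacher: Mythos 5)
Your argument is correct and is exactly the standard one the paper alludes to by citing \cite[Corollary 2.4]{Ekeland-Temam}: extend the segment from $\xi_2$ past $\xi_1$ by one unit to a point $\xi_3 \in B_{L+1}(0)$, use convexity along $[\xi_2,\xi_3]$ to get a one-sided bound with factor $\lambda = |\xi_1-\xi_2|/(|\xi_1-\xi_2|+1) \leq |\xi_1-\xi_2|$, and conclude by symmetry. The paper gives no explicit proof, so there is nothing to compare beyond noting that your computation of $\lambda$ and the verification $|\xi_3|\leq L+1$ are both correct.
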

We also need the following variant of the dominated convergence theorem that can be found for example in \cite[Theorem 1.20]{Evans-Gariepy}.
\begin{lemma}
\label{lem:dominated_variant}
Assume that $v, v_k \in L^1(\R^n)$ and $w, w_k \in L^1(\R^n)$ are measurable for all $k \in \N$. Suppose that $w_k \rightarrow w$ a.e.~in $\R^n$ and $|w_k| \leq v_k$  for all $k \in \N$.
Suppose moreover that $v_k \rightarrow v$ a.e.~in $\R^n$ and
\begin{equation*}
	\lim_{k\rightarrow \infty} \int_{\R^n} v_k \,\dx = \int_{\R^n} v \,\dx.
\end{equation*}
Then 
\[
	\lim_{k\rightarrow \infty} \int_{\R^n} |w_k - w| \,\dx = 0.
\]
\end{lemma}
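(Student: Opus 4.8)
The plan is to derive the statement from Fatou's lemma applied to a suitable sequence of nonnegative functions, which is the standard route to generalized dominated convergence theorems of this type.

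First I would record the pointwise bound $|w| \le v$ a.e.\ in $\R^n$, obtained by passing to the limit $k \to \infty$ in $|w_k| \le v_k$ using the a.e.\ convergences $w_k \to w$ and $v_k \to v$. This gives, for every $k \in \N$, the nonnegativity
\[
	g_k := v_k + v - |w_k - w| \ge 0
	\quad \text{a.e.\ in } \R^n,
\]
since $|w_k - w| \le |w_k| + |w| \le v_k + v$ a.e.

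Next I would apply Fatou's lemma to the measurable nonnegative functions $g_k$. Because $w_k \to w$ and $v_k \to v$ a.e., we have $g_k \to 2v$ a.e., so
\[
	2 \int_{\R^n} v \,\dx
	\le
	\liminf_{k \to \infty} \int_{\R^n} g_k \,\dx
	=
	\int_{\R^n} v \,\dx + \liminf_{k \to \infty}\left( \int_{\R^n} v_k \,\dx - \int_{\R^n} |w_k - w| \,\dx \right).
\]
Using the hypothesis $\int_{\R^n} v_k \,\dx \to \int_{\R^n} v \,\dx$ (a finite limit, as $v \in L^1(\R^n)$) together with the elementary identity $\liminf_k (a_k - b_k) = \lim_k a_k - \limsup_k b_k$, valid since $\lim_k a_k$ exists and is finite, the right-hand side equals $2\int_{\R^n} v \,\dx - \limsup_{k\to\infty} \int_{\R^n}|w_k - w|\,\dx$. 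Cancelling the finite quantity $2\int_{\R^n}v\,\dx$ yields $\limsup_{k\to\infty}\int_{\R^n}|w_k-w|\,\dx \le 0$, and since the integrand is nonnegative this is exactly $\lim_{k\to\infty}\int_{\R^n}|w_k-w|\,\dx = 0$.

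The argument is entirely routine; the only care needed is the finiteness bookkeeping — one uses $v \in L^1(\R^n)$ so that the subtraction and cancellation of $2\int_{\R^n}v\,\dx$ are legitimate, and $w_k, w \in L^1(\R^n)$ so that $\int_{\R^n}|w_k-w|\,\dx$ is finite. All of this is contained in the hypotheses, so there is no genuine obstacle; alternatively one may simply cite \cite[Theorem 1.20]{Evans-Gariepy}.
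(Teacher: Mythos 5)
Your argument is correct, and it is the standard Fatou-based proof of this generalized dominated convergence theorem. The paper itself offers no proof of Lemma~2.5 — it simply cites \cite[Theorem~1.20]{Evans-Gariepy} — and your proposal reproduces essentially the argument given there: establish $|w|\le v$ a.e.\ by passing to the limit in $|w_k|\le v_k$, apply Fatou's lemma to $g_k = v_k + v - |w_k - w| \ge 0$, and use the finiteness of $\int v$ (and of $\int v_k$, $\int|w_k-w|$, guaranteed by the $L^1$ hypotheses) to split the integral and cancel. One small point worth making explicit is that the splitting $\int g_k = \int v_k + \int v - \int|w_k-w|$ is legitimate precisely because each term is finite by hypothesis, which you do flag; with that, the argument is airtight.
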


With the preceding lemmas at hand, we prove the following convergence result.
\begin{lemma}
\label{lem:Steklov_convergence}
Let $T \in (0,\infty)$ and assume that $f \colon [0,T] \times \R^n \to \R$ satisfies hypotheses
\eqref{eq:integrand}.
For $\epsilon>0$ let $f_\epsilon \colon [0,T] \times \R^n \to \R$ denote the Steklov average of $f$ given by \eqref{eq:Steklov}.
Then, we have that
$$
	\lim_{\epsilon \downarrow 0}
	\int_0^T \sup_{|\xi| \leq L} |f_\epsilon(t,\xi) - f(t,\xi)| \,\dt
	= 0
	\quad \text{for any } L>0.
$$
\end{lemma}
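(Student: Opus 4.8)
Fix $L>0$ and, as in \eqref{eq:Steklov}, regard $f$ as extended by zero beyond $T$. The statement is a uniform-in-$\xi$ assertion, and the plan is to reduce it to countably many scalar statements plus an equicontinuity estimate in $\xi$ that is uniform in $\epsilon$. Choose a dense sequence $(\xi_j)_{j\in\N}$ in $B_L(0)$, and for $N\in\N$ put $\rho_N:=\sup_{\xi\in B_L(0)}\min_{1\le j\le N}|\xi-\xi_j|$; by compactness of $B_L(0)$ and density of $(\xi_j)$ one has $\rho_N\to 0$ as $N\to\infty$. The two ingredients I would combine are: (i) for each fixed $j$, the map $t\mapsto f(t,\xi_j)$ lies in $L^1(0,T)$ by \eqref{eq:integrand}, hence $f_\epsilon(\cdot,\xi_j)\to f(\cdot,\xi_j)$ in $L^1(0,T)$ as $\epsilon\downarrow 0$ (the standard $L^1$-convergence of Steklov averages of an $L^1$-function, via continuity of translations in $L^1$); and (ii) an equi-Lipschitz bound in $\xi$, uniform in $\epsilon$, for $f_\epsilon(t,\cdot)-f(t,\cdot)$ on $B_L(0)$.

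For (ii): since $f_\epsilon(t,\cdot)$ is an average of the convex functions $f(s,\cdot)$, $s\in(t,t+\epsilon)$, it is convex, and by Lemma \ref{lem:equiL1} it satisfies $\|f_\epsilon(t,\cdot)\|_{L^\infty(B_{L+1}(0))}\le (g_{L+1})_\epsilon(t)$, where $(g_{L+1})_\epsilon(t):=\bint_t^{t+\epsilon}g_{L+1}(s)\,\ds$ is the Steklov average of $g_{L+1}$; likewise $\|f(t,\cdot)\|_{L^\infty(B_{L+1}(0))}\le g_{L+1}(t)$ for a.e.\ $t$. Hence Lemma \ref{lem:quantitative_lipschitz} gives that, for a.e.\ $t$, $f_\epsilon(t,\cdot)$ and $f(t,\cdot)$ are Lipschitz on $B_L(0)$ with constants $2(g_{L+1})_\epsilon(t)$ and $2g_{L+1}(t)$, respectively. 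Inserting, for $\xi\in B_L(0)$ and $N\in\N$, an index $j\le N$ with $|\xi-\xi_j|\le\rho_N$, we obtain for a.e.\ $t$
\[
 \sup_{|\xi|\le L}|f_\epsilon(t,\xi)-f(t,\xi)| \le \max_{1\le j\le N}|f_\epsilon(t,\xi_j)-f(t,\xi_j)| + 2\rho_N\big(g_{L+1}(t)+(g_{L+1})_\epsilon(t)\big),
\]
and integrating in $t$ (and bounding the maximum by the sum),
\begin{align*}
 \int_0^T \sup_{|\xi|\le L}|f_\epsilon(t,\xi)-f(t,\xi)|\,\dt
 &\le \sum_{j=1}^N \|f_\epsilon(\cdot,\xi_j)-f(\cdot,\xi_j)\|_{L^1(0,T)} \\
 &\quad + 2\rho_N\big(\|g_{L+1}\|_{L^1(0,T)} + \|(g_{L+1})_\epsilon\|_{L^1(0,T)}\big).
\end{align*}

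The last term is controlled uniformly in $\epsilon$: a direct application of Tonelli's theorem (using that $g_{L+1}$ vanishes past $T$) gives $\|(g_{L+1})_\epsilon\|_{L^1(0,T)}\le\|g_{L+1}\|_{L^1(0,T)}$. Thus, given $\eta>0$, I would first choose $N$ so large that $4\rho_N\|g_{L+1}\|_{L^1(0,T)}<\eta/2$, and then, with this $N$ fixed, use (i) to pick $\epsilon_o>0$ with $\sum_{j=1}^N\|f_\epsilon(\cdot,\xi_j)-f(\cdot,\xi_j)\|_{L^1(0,T)}<\eta/2$ for all $\epsilon\in(0,\epsilon_o)$. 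This yields $\int_0^T\sup_{|\xi|\le L}|f_\epsilon(t,\xi)-f(t,\xi)|\,\dt<\eta$ for $\epsilon\in(0,\epsilon_o)$, which is the claim.

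The only non-routine point is step (ii), the passage from the countable set $\{\xi_j\}$ to the full supremum over $B_L(0)$: this is precisely where the convexity of the integrand and the slightly oversized domination of Lemma \ref{lem:equiL1} (controlling $|f(t,\xi)|$ on $B_{L+1}(0)$, not merely $B_L(0)$) are used, so that Lemma \ref{lem:quantitative_lipschitz} applies and produces a modulus of continuity in $\xi$ that survives integration in $t$ uniformly in $\epsilon$. Everything else reduces to the scalar $L^1$-convergence of Steklov averages. (As an alternative one could phrase the entire argument as $L^1(0,T;C(B_L(0)))$-convergence of Steklov averages of the $C(B_L(0))$-valued map $t\mapsto f(t,\cdot)$, but the elementary argument above avoids invoking vector-valued measurability; Lemma \ref{lem:dominated_variant} would likewise give an alternative route via a subsequence and an Arzel\`a--Ascoli argument.)
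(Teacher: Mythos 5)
Your proof is correct, and it takes a genuinely different route from the paper's. The paper first establishes the pointwise a.e.\ convergence $\sup_{|\xi|\le L}|f_\epsilon(t,\xi)-f(t,\xi)|\to 0$ (for a.e.\ $t$, via Lebesgue differentiation at countably many rational $\xi$, an equi-Lipschitz bound in $\xi$ from Lemma~\ref{lem:quantitative_lipschitz}, and an Arzel\`a--Ascoli argument on $B_L(0)$), and then upgrades this to $L^1$-convergence in $t$ by invoking the generalized dominated convergence theorem of Lemma~\ref{lem:dominated_variant}, with the $\epsilon$-dependent dominating sequence $\bint_t^{t+\epsilon}g_L(s)\,\ds + g_L(t)$. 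You instead estimate the $L^1(0,T)$ norm directly: you split $\sup_{|\xi|\le L}$ into a finite max over a $\rho_N$-net plus a Lipschitz error, integrate, and then control the two pieces separately using (a) the ordinary $L^1$-convergence of Steklov averages at each fixed $\xi_j$ and (b) a uniform-in-$\epsilon$ bound $\|(g_{L+1})_\epsilon\|_{L^1}\le\|g_{L+1}\|_{L^1}$. Both proofs hinge on the same two structural ingredients --- convexity via Lemma~\ref{lem:quantitative_lipschitz} and the $B_{L+1}$-domination from Lemma~\ref{lem:equiL1} --- but yours is arguably more elementary: it bypasses both Arzel\`a--Ascoli (replaced by an explicit $\rho_N$-net estimate) and the nonstandard dominated convergence lemma (replaced by a quantitative $\eta/2$-argument and Tonelli). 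The paper's version, on the other hand, delivers the a.e.\ pointwise statement \eqref{eq:steklov_aux1} as a byproduct, which may be of independent use. One minor point to make explicit if you were to write this up: the extension of $f$ by zero past $T$ forces you to also extend $g_{L+1}$ by zero so that the bound $|f(s,\xi)|\le g_{L+1}(s)$ and the Tonelli computation remain valid on $(0,T+\epsilon)$; this is harmless but worth stating.
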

\begin{proof}
Fix $L>0$.
First of all, we show that
\begin{equation}
	\lim_{\epsilon \downarrow 0} \sup_{|\xi| \leq L} |f_\epsilon(t,\xi) - f(t,\xi)|
	= 0
	\quad \text{for a.e.~$t \in [0,T]$.}
	\label{eq:steklov_aux1}
\end{equation}
By \eqref{eq:integrand}$_2$, for fixed $\xi \in \R^n$ we have that $f_\epsilon(t,\xi) \to f(t,\xi)$ for a.e.~$t \in [0,T]$ by Lebesgue's differentiation theorem.
Thus, there exists a set $N$ of $\L^1$-measure zero such that
\begin{equation}
	f_\epsilon(t,\xi) \to f(t,\xi)
	\quad \text{for any $t \in [0,T] \setminus N$ and $\xi \in \mathds{Q}^n$.}
	\label{eq:steklov_aux2}
\end{equation}
Without loss of generality assume that additionally for all $t \in [0,T] \setminus N$ the map $\xi \mapsto f(t,\xi)$ is convex, the function $g_{L+1}$ from \eqref{eq:dominating_f} fulfills $g_{L+1}(t) < \infty$ and there holds $\bint_t^{t+\epsilon} g_{L+1}(s) \,\ds \to g_{L+1}(t)$.
Now, fix $t \in [0,T] \setminus N$.
By \eqref{eq:dominating_f} and Lemma~\ref{lem:quantitative_lipschitz} we conclude that $\xi \mapsto f(t,\xi)$ is Lipschitz continuous in $B_L(0)$
with Lipschitz constant $2 g_{L+1}(t)$.
Using this together with the definition of the Steklov average, for any $\xi_1, \xi_2 \in B_L(0)$ we compute that
\begin{align*}
	|f_\epsilon(t,\xi_1) - f_\epsilon(t,\xi_2)|
	\leq
	\bint_t^{t+\epsilon} |f(s,\xi_1) - f(s,\xi_2)| \,\ds
	\leq
	2 \bint_t^{t+\epsilon} g_{L+1}(s) \,\ds \,|\xi_1 - \xi_2|.
\end{align*}
Since $\bint_t^{t+\epsilon} g_{L+1}(s) \,\ds \to g_{L+1}(t)$, there exists $\epsilon_o>0$ such that $\xi \mapsto f_\epsilon(t,\xi)$ is Lipschitz continuous with Lipschitz constant $4 g_{L+1}(t)$ for all $\epsilon \in (0,\epsilon_o]$.
This shows that the sequence $(f_\epsilon(t,\cdot))_{\epsilon \in (0,\epsilon_o]}$ is equicontinuous in $B_L(0)$.
Moreover, $(f_\epsilon(t,\cdot))_{\epsilon \in (0,\epsilon_o]}$ is equibounded in $B_L(0)$, since for any $\xi \in B_L(0)$ and $\epsilon \in (0,\epsilon_o]$, we find that
$$
	|f_\epsilon(t,\xi)|
	\leq
	\bint_t^{t+\epsilon} |f(s,\xi)| \,\ds
	\leq
	\bint_t^{t+\epsilon} g_{L+1}(s) \,\ds
	\leq
	2 g_{L+1}(t).
$$
Therefore, we infer from the Arz\`ela-Ascoli theorem that $(f_\epsilon(t,\cdot))_{\epsilon \in (0,\epsilon_o]}$ converges uniformly in $B_L(0)$ as $\epsilon \downarrow 0$ and the limit $f(t,\cdot)$ is determined by \eqref{eq:steklov_aux2}.
This concludes the proof of \eqref{eq:steklov_aux1}.
Next, since
$$
	\sup_{|\xi| \leq L} |f_\epsilon(t,\xi) - f(t,\xi)|
	\leq
	\sup_{|\xi| \leq L} |f_\epsilon(t,\xi)| + \sup_{|\xi| \leq L} |f(t,\xi)|
	\leq
	\bint_t^{t+\epsilon} g_L(s) \,\ds + g_L(t),
$$
where $\bint_t^{t+\epsilon} g_L(s) \,\ds \to g_L(t)$ in $L^1(0,T)$, the claim now follows from Lemma \ref{lem:dominated_variant}.
\end{proof}

\subsection{Mollification in time}
In general, variational solutions are not admissible as comparison maps in the variational inequality \eqref{eq:var_ineq_unconstrained}, since they do not necessarily admit a derivative with respect to time.
Therefore, we use the following mollification procedure with respect to time.
More precisely, consider a separable Banach space $X$, an initial datum $v_o \in X$ and a map $v \in L^r(0,T;X)$ for some $r \in [1,\infty]$.
For $h>0$ define the mollification
\begin{equation}
	[v]_h(t) :=
	e^{-\frac{t}{h}} v_o + \tfrac1h \int_0^t e^\frac{s-t}{h} v(s) \,\ds
	\quad\text{for any } t \in [0,T].
	\label{eq:time_mollification}
\end{equation}
Later on, we will mainly use $X= L^q(\Omega)$ or $X = W^{1,q}(\Omega)$ for some $q \in [1,\infty)$.
A vital feature of this mollification procedure is that $[v]_h$ solves the ordinary differential equation
\begin{equation}
	\partial_t [v]_h = \tfrac1h \big( v - [v]_h \big)
	\label{eq:ode_mollification}
\end{equation}
with initial condition $[v]_h(0) = v_o$.
This shows in particular that if $v$ and $[v]_h$ are contained in a function space, the same holds true for the time derivative of $[v]_h$.
The basic properties of time mollifications are collected in the following lemma (cf.~\cite[Lemma 2.2]{Kinnunen-Lindqvist} and \cite[Appendix B]{BDM} for the proofs).
\begin{lemma}
\label{lem:time_mollification}
Let $X$ be a separable Banach space and $v_o \in X$.
If $v \in L^r(0,T;X)$ for some $r \in [1,\infty]$, then also $[v]_h \in L^r(0,T;X)$
and if $r < \infty$, then $[v]_h \to v$ in $L^r(0,T;X)$ as $h \downarrow 0$.
Further, for any $t_o \in (0,T]$ there holds the bound
$$
	\big\| [v]_h \big\|_{L^r(0,t_o;X)}
	\leq
	\| v \|_{L^r(0,t_o;X)}
	+ \Big[ \tfrac{h}{r} \Big( 1 - e^{-\tfrac{t_o r}{h}} \Big) \Big]^\frac{1}{r} \|v_o\|_X,
$$
where the bracket $[\ldots]^\frac{1}{r}$ has to be interpreted as $1$ if $r = \infty$.
Moreover, if $v \in C^0([0,T];X)$, then also $[v]_h \in C^0([0,T];X)$ with $[v]_h(0) = v_o$ and there holds $[v]_h \to v$ in $L^\infty(0,T;X)$ as $h \downarrow 0$.
\end{lemma}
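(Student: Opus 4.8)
The plan is to regard $[v]_h$ as the sum of an explicitly decaying term carrying the initial datum and a convolution of $v$ with a normalised exponential kernel, and then to read off all three assertions from properties of these two pieces. Concretely, set
$$
	K_h(u) := \tfrac1h\, e^{-u/h}\, \chi_{(0,\infty)}(u),
	\qquad
	\int_0^\infty K_h(u)\,\d u = 1,
$$
and let $\tilde v$ be the extension of $v$ by $0$ outside $[0,T]$. Then $[v]_h(t) = e^{-t/h} v_o + (K_h * \tilde v)(t)$ for all $t \in [0,T]$, and \eqref{eq:ode_mollification} together with $[v]_h(0) = v_o$ is immediate from this formula.

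For the quantitative bound --- which also yields $[v]_h \in L^r(0,T;X)$ --- I would apply the triangle (generalised Minkowski) inequality in $L^r(0,t_o;X)$ to the above splitting. The first summand contributes
$$
	\|v_o\|_X \Big( \int_0^{t_o} e^{-rt/h}\,\dt \Big)^{1/r}
	= \Big[ \tfrac hr \big( 1 - e^{-t_o r/h} \big) \Big]^{1/r} \|v_o\|_X
$$
by direct integration (and $\|v_o\|_X$ in the case $r = \infty$). For the second summand, passing to the scalar function $t \mapsto \|v(t)\|_X$ and invoking Young's convolution inequality, which is valid for Bochner-integrable functions, together with $\|K_h\|_{L^1(\R)} = 1$, bounds it by $\|v\|_{L^r(0,t_o;X)}$; here one uses that for $s \in (0,t) \subset (0,t_o)$ only the values of $\tilde v$ on $(0,t_o)$ enter the convolution, so restricting to $(0,t_o)$ only decreases the norm. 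Adding the two contributions gives exactly the claimed estimate.

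To prove $[v]_h \to v$ in $L^r(0,T;X)$ when $r < \infty$, observe first that $e^{-t/h} v_o \to 0$ in $L^r(0,T;X)$, since its norm is $[\tfrac hr(1 - e^{-Tr/h})]^{1/r}\|v_o\|_X \to 0$. It remains to check that $A_h v := K_h * \tilde v \to v$ in $L^r(0,T;X)$. The operators $A_h$ are linear and, by the previous step, uniformly bounded (by $1$) on $L^r(0,T;X)$, so by density it suffices to treat $v \in C^0([0,T];X)$. For such $v$, using $\tfrac1h\int_0^t e^{(s-t)/h}\,\ds = 1 - e^{-t/h}$ one writes
$$
	A_h v(t) - v(t)
	= - e^{-t/h} v(t)
	+ \tfrac1h \int_0^t e^{(s-t)/h}\big( v(s) - v(t) \big)\,\ds ;
$$
for each fixed $t \in (0,T]$ both terms tend to $0$ as $h \downarrow 0$ (the integral after the substitution $u = (t-s)/h$ by dominated convergence), and both are bounded by $2\sup_{[0,T]}\|v(\cdot)\|_X$, so a further application of dominated convergence on $(0,T)$ gives the $L^r$-convergence.

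For the last assertion, if $v \in C^0([0,T];X)$ then $t \mapsto e^{-t/h} v_o$ and $t \mapsto (K_h * \tilde v)(t)$ are continuous, so $[v]_h \in C^0([0,T];X)$ --- in fact $[v]_h \in C^1$, since by \eqref{eq:ode_mollification} its derivative equals $\tfrac1h(v - [v]_h) \in C^0$ --- and $[v]_h(0) = v_o$ as noted. For the uniform convergence one refines the previous estimate using $v_o = v(0)$:
$$
	[v]_h(t) - v(t)
	= e^{-t/h}\big( v(0) - v(t) \big)
	+ \tfrac1h \int_0^t e^{(s-t)/h}\big( v(s) - v(t) \big)\,\ds ,
$$
and, with $\omega$ the modulus of continuity of $v$ on $[0,T]$, splitting the integral at $s = t - \delta$ bounds the near-diagonal part by $\omega(\delta)$, while the remaining part and the boundary term $e^{-t/h}\|v(0) - v(t)\|_X$ are controlled by $2\sup_{[0,T]}\|v(\cdot)\|_X \, e^{-\delta/h}$; taking $\delta = \sqrt h$ makes the total tend to $0$ uniformly in $t$, which is convergence in $L^\infty(0,T;X)$. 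The one genuinely delicate point is this uniform estimate near $t = 0$, where $e^{-t/h}$ is not small and one must use the continuity of $v$ at $0$ and the identity $v_o = v(0)$; the rest is routine Young's inequality and dominated convergence.
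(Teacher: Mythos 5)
Your proof is correct and complete. The paper itself does not prove this lemma but defers to \cite[Lemma~2.2]{Kinnunen-Lindqvist} and \cite[Appendix~B]{BDM}, so there is no in-paper argument to compare against; your convolution-with-$K_h$ plus Young's-inequality framing is a clean, self-contained way to get the quantitative bound and, via uniform boundedness of the operators and density of $C^0([0,T];X)$, the $L^r$ convergence. One small imprecision, which you do flag yourself at the end: the bound ``boundary term $e^{-t/h}\|v(0)-v(t)\|_X \leq 2\sup\|v\|\,e^{-\delta/h}$'' is only valid for $t\geq\delta$; for $t<\delta$ the split at $s=t-\delta$ is vacuous and one instead bounds the boundary term by $\omega(\delta)$ using $v_o=v(0)$ and the modulus of continuity of $v$ at the origin. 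With that case made explicit the choice $\delta=\sqrt h$ does give the claimed uniform convergence. (Note also, as your argument makes visible, that the final $L^\infty$-convergence assertion tacitly requires $v_o=v(0)$; the $L^r$-convergence for $r<\infty$ holds for arbitrary $v_o$ since the initial-datum term vanishes in $L^r$.)
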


For maps $v \in L^r(0,T;X)$ with $\partial_t v \in L^r(0,T;X)$
we have the following assertion.
\begin{lemma}
\label{lem:time_mollification_2}
Let $X$ be a separable Banach space and $r \ge 1$.
Assume that $v \in L^r(0,T;X)$ with $\partial_t v \in L^r(0,T;X)$.
Then, for the mollification in time defined by
\begin{equation*}
	[v]_h(t)
	:=
	e^{-\frac{t}{h}}v(0) + \tfrac{1}{h} \int_0^t e^\frac{s-t}{h}v(s) \, \ds
\end{equation*}
the time derivative can be computed by
\begin{equation*}
	\partial_t [v]_h(t)
	=
	\tfrac{1}{h} \int_0^t e^\frac{s-t}{h}\partial_s v(s) \, \ds.
\end{equation*}
\end{lemma}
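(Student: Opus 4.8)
The plan is to rewrite $[v]_h$ in a form from which the time derivative can be read off directly, and then to differentiate. Since $v \in L^r(0,T;X)$ with $\partial_t v \in L^r(0,T;X)$, the map $v$ coincides almost everywhere with an absolutely continuous representative (still denoted by $v$) satisfying the fundamental theorem of calculus $v(s) = v(0) + \int_0^s \partial_r v(r)\,\dr$ for every $s\in[0,T]$; in particular $v(0)$ is well defined and $\partial_r v \in L^1(0,T;X)$.

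First I would insert this identity into the integral in the definition \eqref{eq:time_mollification} of $[v]_h$ and interchange the order of integration, which is legitimate by Fubini's theorem for Bochner integrals, since $\partial_r v \in L^1(0,T;X)$ and the scalar kernel $(r,s)\mapsto e^{\frac{s-t}{h}}$ is bounded on $[0,t]^2$. Using the elementary identities $\tfrac1h\int_0^t e^{\frac{s-t}{h}}\,\ds = 1-e^{-\frac th}$ and $\tfrac1h\int_r^t e^{\frac{s-t}{h}}\,\ds = 1-e^{\frac{r-t}{h}}$, a short computation gives
\[
	\tfrac1h \int_0^t e^{\frac{s-t}{h}} v(s)\,\ds
	= v(t) - e^{-\frac th}v(0) - \int_0^t e^{\frac{r-t}{h}}\partial_r v(r)\,\dr ,
\]
and therefore
\[
	[v]_h(t) = v(t) - \int_0^t e^{\frac{r-t}{h}}\partial_r v(r)\,\dr .
\]
(The same identity can alternatively be obtained by integrating by parts in $s$ in \eqref{eq:time_mollification}.)

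Next I would differentiate this representation with respect to $t$. Writing the remaining integral as $e^{-\frac th}\int_0^t e^{\frac rh}\partial_r v(r)\,\dr$, the inner integral is an absolutely continuous $X$-valued function of $t$ with derivative $e^{\frac th}\partial_t v(t)$ for a.e.~$t$, while $t\mapsto e^{-\frac th}$ is smooth, so the product rule for absolutely continuous functions yields
\[
	\partial_t\Big(\int_0^t e^{\frac{r-t}{h}}\partial_r v(r)\,\dr\Big)
	= \partial_t v(t) - \tfrac1h \int_0^t e^{\frac{r-t}{h}}\partial_r v(r)\,\dr
\]
for a.e.~$t$. Subtracting this from $\partial_t v(t)$ gives exactly $\partial_t[v]_h(t) = \tfrac1h\int_0^t e^{\frac{r-t}{h}}\partial_r v(r)\,\dr$. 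Equivalently, once the representation $[v]_h(t) = v(t) - \int_0^t e^{\frac{r-t}{h}}\partial_r v(r)\,\dr$ is available, the claimed formula follows at once from the ODE \eqref{eq:ode_mollification}, since then $\tfrac1h\big(v(t)-[v]_h(t)\big) = \tfrac1h\int_0^t e^{\frac{r-t}{h}}\partial_r v(r)\,\dr$.

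The steps that require some care — and the closest thing to a genuine obstacle — are the measure-theoretic justifications in the vector-valued setting: the existence of the absolutely continuous representative of $v$, the validity of Fubini's theorem (or of integration by parts) for Bochner integrals, and the differentiation of an $X$-valued integral with respect to its upper limit. All of these are standard for maps with $L^1$ time derivative; if one prefers to avoid invoking them directly, one can first prove the formula with $\partial_t v$ replaced by a simple (or smooth) approximation and then pass to the limit using Lemma~\ref{lem:time_mollification}, so that the computation above goes through essentially as in the scalar case.
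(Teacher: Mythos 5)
Your proof is correct. The paper states this lemma without a proof (it is treated as a standard fact, with the nearby reference to \cite[Appendix B]{BDM} covering the basic properties of this mollification), so there is no in-text argument to compare against; the substance of the claim is exactly the computation you carry out. Your central identity $[v]_h(t) = v(t) - \int_0^t e^{(s-t)/h}\partial_s v(s)\,\ds$ is right, and both of your routes from it to the claimed formula are valid. The shortest and cleanest version of your own argument is the one you mention in passing: obtain that identity by a single integration by parts in $s$ (rather than inserting the fundamental theorem of calculus and invoking Fubini, which reaches the same place with more bookkeeping), and then read off the time derivative directly from the ODE \eqref{eq:ode_mollification}, since $v(t)-[v]_h(t)=\int_0^t e^{(s-t)/h}\partial_s v(s)\,\ds$ and $\partial_t[v]_h = \tfrac1h(v-[v]_h)$. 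The measure-theoretic points you flag (absolutely continuous representative, Bochner--Fubini, differentiation under the integral with a moving upper limit) are indeed the only places requiring care in the vector-valued setting, and all hold under the stated hypothesis $\partial_t v \in L^r(0,T;X)\subset L^1(0,T;X)$ for separable $X$; your closing remark about approximating by simple functions and passing to the limit via Lemma~\ref{lem:time_mollification} is a legitimate fallback but is not needed.
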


\section{Properties of variational solutions}\label{sec:properties}
As mentioned in the introduction, besides variational solutions in the sense of Definition~\ref{def:unconstrainded_solution}, we consider variational solutions of the so-called gradient constrained obstacle problem to \eqref{eq:pde}. They enjoy the same basic properties as variational solutions to the unconstrained Cauchy-Dirichlet problem to \eqref{eq:pde} and proofs will be given in a unified way in this section.  

Let  $L \in (0, \infty]$. We define the following class of functions that are $L$-Lipschitz in space
$$
	K^L(\Omega_T) :=
	\{v \in K^\infty(\Omega_T) : \|Dv\|_{L^\infty(\Omega_T,\R^n)} \leq L \}
$$
and given time-independent boundary values $u_o \in W^{1,\infty}(\Omega)$ with  $\|Du_o\|_{L^\infty(\Omega,\R^n)} \leq L$, we denote the subclass 
$$
	K^L_{u_o}(\Omega_T) :=
	\{v \in K^L(\Omega_T) :
	\text{$v = u_o$ on the lateral boundary $\partial\Omega \times (0,T)$} \}.
$$

\begin{definition}
\label{def:constrained_solution}
Assume that $f \colon [0,T] \times \R^n \to \R$ satisfies \eqref{eq:integrand}, consider a boundary datum $u_o \in W^{1,\infty}(\Omega)$
and let $L \in (0,\infty)$ be such that $\|Du_o\|_{L^\infty(\Omega,\R^n)} \leq L$.
In the case $T<\infty$ a map $u \in K^L_{u_o}(\Omega_T)$ is called a \emph{variational solution}
to the gradient constrained Cauchy-Dirichlet problem associated with \eqref{eq:pde} and $u_o$ in $\Omega_T$
if and only if the variational inequality
\begin{align}
	\iint_{\Omega_T} f(t,Du) \,\dx\dt
	&\leq
	\iint_{\Omega_T} \partial_t v (v - u) + f(t,Dv) \,\dx\dt \label{eq:var_ineq_constrained} \\
	&\phantom{=}
	+ \tfrac12 \|v(0) - u_o\|_{L^2(\Omega)}^2
	- \tfrac12 \|(v - u)(T)\|_{L^2(\Omega)}^2
	\nonumber
\end{align}
holds true for any comparison map $v \in K^L_{u_o}(\Omega_T)$
with $\partial_t v \in L^2(\Omega_T)$.
If $T=\infty$ and $u \in K^L_{u_o}(\Omega_\infty)$ is a variational solution
in $\Omega_\tau$ for any $\tau > 0$, $u$ is called a \emph{global variational solution}
or variational solution in $\Omega_\infty$ to the gradient constrained Cauchy-Dirichlet problem
associated with \eqref{eq:pde} and $u_o$.
\end{definition}

\subsection{Continuity with respect to time}
In Definitions \ref{def:unconstrainded_solution} and \ref{def:constrained_solution} we require that variational solutions are contained in the space $C^0([0,T];L^2(\Omega))$.
However, this is already implied if $u$ satisfies a variational inequality for a.e.~$\tau \in [0,T]$.
More precisely, we have the following Lemma, which will be applied with $L=\infty$ in Section \ref{sec:unconstrained_for_general}.
\begin{lemma}
\label{lem:time_continuity}
Let $\Omega \subset \R^n$ be open and bounded and $T\in(0,\infty)$ and assume that $f \colon [0,T] \times \R^n \to \R$ satisfies \eqref{eq:integrand}.
Let $L \in (0,\infty]$ and consider $u_o\in W^{1,\infty}(\Omega)$ such that $\|Du_o\|_{L^\infty(\Omega,\R^n)} \leq L$.
Further, consider $u\in L^{\infty}(\Omega_{T})$ with $u=u_o$ on $\partial_{\mathcal{P}}\Omega_{T}$ and $\left\Vert Du\right\Vert _{L^{\infty}(\Omega_T,\mathbb{R}^{n})} \leq L$ if $L \in (0,\infty)$ and $\left\Vert Du\right\Vert _{L^{\infty}(\Omega_T,\mathbb{R}^{n})}<\infty$ if $L=\infty$, respectively.
Suppose that $u$ satisfies the variational inequality 
\begin{align}
	\iint_{\Omega_{\tau}}f(t,Du)\,\dx\dt & \leq\iint_{\Omega_{\tau}}\partial_{t}v(v-u)\,\dx\dt+\iint_{\Omega_{\tau}}f(t,Dv)\,\dx\dt\nonumber \\
	 &\phantom{=} +\tfrac{1}{2}\left\Vert v(0)-u_o\right\Vert _{L^{2}(\Omega)}^{2}-\tfrac{1}{2}\left\Vert v(\tau)-u(\tau)\right\Vert _{L^{2}(\Omega)}^{2}\label{eq:time continuity 1}
\end{align}
for almost all $\tau\in(0,T)$ whenever $v\in K_{u_o}^L(\Omega_{T})$
with $\partial_{t}v\in L^{2}(\Omega_{T})$.
Then, we have that $u\in C^{0}([0,T];L^{2}(\Omega))$.
\end{lemma}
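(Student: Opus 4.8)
The plan is to show that the partial map $t \mapsto u(t)$ agrees almost everywhere with a function that is continuous into $L^2(\Omega)$. The natural strategy is to extract from the variational inequality \eqref{eq:time continuity 1} enough control to conclude that $u$ has a representative that is both right-continuous and left-continuous at every time, and to do this by testing with carefully chosen comparison maps. The basic mechanism is the one behind the time-continuity statements in \cite{BDM} and \cite{BDMS}: the appearance of the negative boundary term $-\tfrac12\|v(\tau)-u(\tau)\|_{L^2(\Omega)}^2$ on the right and the absence of a corresponding initial penalty term (beyond $\tfrac12\|v(0)-u_o\|_{L^2(\Omega)}^2$) forces the solution to be close, in $L^2$, to its values at nearby times.

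First I would reduce to the homogeneous situation: replacing $u$ by $u - u_o$ (and correspondingly $v$ by $v - u_o$) turns the comparison class into functions vanishing on the lateral boundary, and since $u_o$ is time-independent this changes none of the relevant structure — the only cost is bounded correction terms involving $f(t, Du_o)$, which are in $L^1$ by Lemma~\ref{lem:equiL1}. Next, I would fix a Lebesgue point $t_1 \in (0,T)$ of $\tau \mapsto \iint_{\Omega_\tau} f(t,Du)\,\dx\dt$ at which \eqref{eq:time continuity 1} holds, and for a test time $t_2 > t_1$ construct a comparison map $v$ that equals the time-mollification $[u]_h$ of $u$ on $(0,t_1)$, is frozen (constant in time, equal to $[u]_h(t_1)$) on $(t_1,t_2)$, and is arbitrary admissible afterwards; here $[\,\cdot\,]_h$ is the mollification \eqref{eq:time_mollification} with $X = L^2(\Omega)$ and $v_o = 0$, which by Lemma~\ref{lem:time_mollification} lies in $K^L_{u_o}$ (after undoing the homogenization) and has $\partial_t v \in L^2$. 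Applying \eqref{eq:time continuity 1} at $\tau = t_1$ and at $\tau = t_2$ and subtracting, the $f(t,Dv)$ terms on $(t_1,t_2)$ are controlled (the gradient is frozen and bounded, so the integrand is dominated by $g_L \in L^1$), the $\partial_t v(v-u)$ term is handled using the ODE \eqref{eq:ode_mollification}, and one is left with an estimate of the form $\tfrac12\|u(t_2) - [u]_h(t_1)\|_{L^2(\Omega)}^2 \leq \omega(t_2 - t_1) + (\text{error in } h)$ for an $L^1$-modulus $\omega$; letting $h \downarrow 0$ (using $[u]_h(t_1) \to u(t_1)$ in $L^2$ for a.e.\ $t_1$, by Lemma~\ref{lem:time_mollification}) yields that $\limsup_{t_2 \downarrow t_1}\|u(t_2) - u(t_1)\|_{L^2(\Omega)} = 0$ along a full-measure set of $t_1$. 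A symmetric construction, freezing on $(t_2, t_1)$ with $t_2 < t_1$ and comparing at $\tau = t_2$ and $\tau = t_1$, gives the corresponding left-sided estimate; combining the two shows that $\{u(t)\}$ is an $L^2$-Cauchy family as $t$ ranges over the good set accumulating at any point of $(0,T)$, hence $u$ has a continuous representative on $(0,T)$, and the same argument with $\tau \downarrow 0$ together with the $\tfrac12\|v(0) - u_o\|^2$ term handles continuity up to $t=0$ (giving $u(0) = u_o$) and up to $t = T$.

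The main obstacle I expect is the bookkeeping in the subtraction step: the variational inequality at $\tau = t_2$ contains $\iint_{\Omega_{t_2}} f(t,Du)$ on the left whereas at $\tau = t_1$ it contains $\iint_{\Omega_{t_1}} f(t,Du)$, so the difference carries $\iint_{\Omega_{t_2}\setminus\Omega_{t_1}} f(t,Du)$, which has the wrong sign and must be absorbed — this is exactly where the choice of $t_1$ as a Lebesgue point of $\tau \mapsto \iint_{\Omega_\tau} f(t,Du)\,\dx\dt$ is used, so that this term is $o(1)$ as $t_2 \downarrow t_1$. One must also take care that the comparison map $v$ built by concatenation lies in $K^L_{u_o}$ with $\partial_t v \in L^2(\Omega_T)$: continuity in time across the junctions $t_1$ and $t_2$ holds because $[u]_h \in C^0([0,T];L^2(\Omega))$, the spatial Lipschitz bound is preserved because mollification in time and freezing do not increase the spatial gradient, and square-integrability of $\partial_t v$ follows from Lemma~\ref{lem:time_mollification_2} on $(0,t_1)$ and vanishing of $\partial_t v$ on $(t_1,t_2)$. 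Once these technical points are in place the rest is the standard telescoping argument, and the conclusion $u \in C^0([0,T];L^2(\Omega))$ follows.
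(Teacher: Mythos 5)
Your proposal contains the right ingredients --- the time mollification $[u]_h$ with $v_o = u_o$, the ODE $\partial_t[u]_h = \tfrac1h(u - [u]_h)$ from \eqref{eq:ode_mollification}, and dominated convergence for the $f$-terms --- but assembles them along a much longer and somewhat shaky route than the paper. The paper's proof is a one-shot argument: take $v = [u]_h$ directly as the comparison map in \eqref{eq:time continuity 1} (it is admissible since $[u]_h$ is a convex combination of $u_o$ and past values of $u$, so $\|D[u]_h\|_{L^\infty}\le L$, it equals $u_o$ on the lateral boundary, and $\partial_t[u]_h\in L^2$); observe that $\iint_{\Omega_\tau}\partial_t[u]_h([u]_h-u)\,\dx\dt=-h\iint_{\Omega_\tau}|\partial_t[u]_h|^2\,\dx\dt\le 0$ and that the initial penalty vanishes because $[u]_h(0)=u_o$; then take the essential supremum over $\tau\in(0,T)$ to get $\operatorname*{ess\,sup}_\tau\tfrac12\|[u]_h(\tau)-u(\tau)\|_{L^2}^2\le\iint_{\Omega_T}|f(t,D[u]_h)-f(t,Du)|\,\dx\dt$, and let $h\downarrow0$ (dominated convergence via \eqref{eq:dominating_f}). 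Since each $[u]_h\in C^0([0,T];L^2(\Omega))$, this proves $u$ has a representative in $C^0([0,T];L^2(\Omega))$. Your construction of a concatenated comparison map frozen on $(t_1,t_2)$ and the whole Lebesgue-point/telescoping apparatus are therefore superfluous: once you insert $v=[u]_h$ and use its properties you already have a uniform-in-$\tau$ bound, so there is nothing left to patch together from two times.

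Beyond being roundabout, there is a step in your sketch that as written does not parse: you propose to apply \eqref{eq:time continuity 1} at $\tau=t_1$ and $\tau=t_2$ and \emph{subtract} the two inequalities, but both inequalities run in the same direction, so subtraction produces no usable estimate. What you presumably want instead is to apply the $\tau=t_2$ inequality alone with your frozen $v$; its right-hand side already splits over $\Omega_{t_1}$ and $\Omega\times(t_1,t_2)$, with the $(t_1,t_2)$-pieces all bounded by $c\int_{t_1}^{t_2}g_L\,\dt\to0$ (no Lebesgue-point selection needed, because both $f(t,Du)$ and $f(t,Dv)$ are dominated by $g_L$ in $L^1$ on any interval), and the negative boundary term yields $\tfrac12\|[u]_h(t_1)-u(t_2)\|_{L^2}^2$ directly. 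If you fix that step the proof can be completed, but you would still be doing substantially more work than the direct argument above.
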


\begin{proof}
The proof is similar to that of Lemma 2.6 in \cite{Schaetzler17} except for the
estimate of the second integral in \eqref{eq:time continuity 2} below. Denote by $[u]_h$ the time mollification of $u$ with initial values $u_o$ as defined in \eqref{eq:time_mollification}.
In particular, observe that $[u]_{h}\in C^{0}([0,T];L^{2}(\Omega))$, since we know that $\partial_{t}[u]_{h}\in L^{2}(\Omega_{T})$ and $[u]_{h}(0) = u_o \in L^{2}(\Omega)$.
Using $[u]_h$ as a comparison function in \eqref{eq:time continuity 1}, taking the essential supremum over $\tau\in(0,T)$ and recalling that $([u]_h - u) = - h \partial_t [u]_h$, we obtain that
\begin{align}
	\sup_{\tau\in(0,T)}\tfrac{1}{2}\left\Vert [u]_h(\tau)-u(\tau)\right\Vert _{L^{2}(\Omega)}^{2} & \leq\sup_{\tau\in(0,T)}\iint_{\Omega_{\tau}}\partial_{t}  [u]_{h}([u]_{h}-u)\,\dx\dt\nonumber\\
	&\phantom{=}+\iint_{\Omega_{T}}f(t,D[u]_{h})-f(t,Du)\,\dx\dt\nonumber\\
	&\leq \iint_{\Omega_{T}}|f(t,D[u]_{h})-f(t,Du)|\,\dx\dt.
	\label{eq:time continuity 2}
\end{align}
Furthermore, we have that $D[u]_{h}\rightarrow Du$ almost everywhere in
$\Omega_{T}$ as $h\downarrow0$ (up to a subsequence) and that $|D[u]_{h}| \leq |Du_o| + \sup_{\Omega_T} |Du|$. Therefore, by \eqref{eq:dominating_f} and the dominated
convergence theorem we find that the second integral in \eqref{eq:time continuity 2}
vanishes in the limit $h\downarrow0$.
Hence, we have shown that $[u]_{h}\rightarrow u$ in $L^{\infty}(0,T;L^2(\Omega))$.
Combining this with the fact that $[u]_h \in C^0([0,T];L^2(\Omega))$, it follows that also $u\in C^{0}([0,T];L^2(\Omega))$. 
\end{proof}

\subsection{Localization in time}
Here, we show that a variational solution in a space-time cylinder $\Omega_T$ is also a solution in any sub-cylinder $\Omega_\tau$, $\tau \in (0,T)$.
\begin{lemma}[Localization in time]\label{lem:local time}
Let $T \in (0,\infty)$, assume that $\Omega \subset \mathbb{R}^n$ is open and bounded, and that $f \colon [0,T] \times \R^n \to \R$ satisfies \eqref{eq:integrand}.
Consider $u_o \in W^{1,\infty}(\Omega)$ and $L \in (0, \infty]$ such that $\|Du_o\|_{L^\infty(\Omega,\R^n)} \leq L$.
Suppose that $u$ is a variational solution to \eqref{eq:pde} in $K^{L}_{u_o}(\Omega_{T})$ (in the sense of Definition \ref{def:constrained_solution} if $L < \infty$, in the sense of Definition \ref{def:unconstrainded_solution} if $L = \infty$).
Then $\left.u\right|_{\Omega_{\tau}}$ is a variational solution to \eqref{eq:pde} in $K^{L}_{u_o}(\Omega_{\tau})$ for any $\tau\in(0,T]$. 
\end{lemma}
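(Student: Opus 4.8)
Fix $\tau \in (0,T)$ (the case $\tau = T$ being trivial) and let $v \in K^L_{u_o}(\Omega_\tau)$ with $\partial_t v \in L^2(\Omega_\tau)$ be an arbitrary comparison map. The plan is to extend $v$ to an admissible comparison map on the whole cylinder $\Omega_T$ by gluing it to a time mollification of the solution $u$ itself beyond time $\tau$, to test the variational inequality on $\Omega_T$ with this extension, and to pass to the limit. Concretely, for $h>0$ set $\tilde v_h := v$ on $\Omega_\tau$ and
\[
	\tilde v_h(t) := e^{-\frac{t-\tau}{h}} v(\tau) + \tfrac1h \int_\tau^t e^{\frac{s-t}{h}} u(s) \, \ds
	\qquad \text{for } t \in (\tau, T],
\]
i.e.~on $(\tau,T)$ the time mollification of $u$ in the sense of \eqref{eq:time_mollification}, translated to start at $\tau$, with initial datum $v(\tau)$; by \eqref{eq:ode_mollification} it satisfies $\partial_t \tilde v_h = \tfrac1h(u - \tilde v_h)$ there. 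Since the mollification kernel has total mass one and $\|Du\|_{L^\infty(\Omega_T,\R^n)} \le L$ and $\|Dv(\tau)\|_{L^\infty(\Omega,\R^n)} \le L$ (with the obvious modification when $L=\infty$), one checks $\|D\tilde v_h\|_{L^\infty(\Omega_T,\R^n)} \le L$; moreover $\tilde v_h = u_o$ on $\partial\Omega \times (0,T)$, $\tilde v_h$ is continuous across $\tau$ with value $v(\tau)$, and Lemma \ref{lem:time_mollification} (using $u \in C^0([0,T];L^2(\Omega))$) gives $\tilde v_h \in K^L_{u_o}(\Omega_T)$ with $\partial_t \tilde v_h \in L^2(\Omega_T)$. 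Hence $\tilde v_h$ is admissible in Definition \ref{def:constrained_solution} (resp.\ Definition \ref{def:unconstrainded_solution} if $L=\infty$).

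Testing the variational inequality satisfied by $u$ on $\Omega_T$ with $\tilde v_h$ and splitting each space--time integral over $\Omega_\tau$ and $\Omega\times(\tau,T)$ -- using that $\tilde v_h, D\tilde v_h, \partial_t\tilde v_h$ agree with $v, Dv, \partial_t v$ on $\Omega_\tau$, that $\tilde v_h(0)=v(0)$, and that $\partial_t\tilde v_h(\tilde v_h-u) = -\tfrac1h|\tilde v_h-u|^2$ on $\Omega\times(\tau,T)$ -- one arrives after rearranging at
\[
	\iint_{\Omega_\tau} f(t,Du) \,\dx\dt
	\le \iint_{\Omega_\tau} \bigl[ \partial_t v (v-u) + f(t,Dv) \bigr] \,\dx\dt
	+ \tfrac12 \|v(0) - u_o\|_{L^2(\Omega)}^2 + E_h ,
\]
where
\[
	E_h := \iint_{\Omega\times(\tau,T)} \bigl[ f(t,D\tilde v_h) - f(t,Du) \bigr] \,\dx\dt
	- \tfrac1h \iint_{\Omega\times(\tau,T)} |\tilde v_h - u|^2 \,\dx\dt
	- \tfrac12 \|(\tilde v_h - u)(T)\|_{L^2(\Omega)}^2 .
\]
It therefore suffices to prove $\limsup_{h\downarrow0} E_h \le -\tfrac12\|(v-u)(\tau)\|_{L^2(\Omega)}^2$: letting $h\downarrow0$ then yields exactly the variational inequality of Definition \ref{def:constrained_solution} (resp.\ \ref{def:unconstrainded_solution}) on $\Omega_\tau$, and together with $u|_{\Omega_\tau} \in K^L_{u_o}(\Omega_\tau)$ (restriction preserves the spatial Lipschitz bound, the lateral boundary values, and membership in $C^0([0,\tau];L^2(\Omega))$) this gives the claim.

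For the limit, write on $(\tau,T)$
\[
	\tilde v_h - u = e^{-\frac{t-\tau}{h}}\bigl( v(\tau) - u(\tau) \bigr) + r_h, \qquad r_h := [u]_h^\tau - u ,
\]
where $[u]_h^\tau$ is the mollification of $u$ on $[\tau,T]$ with initial datum $u(\tau)$; by Lemma \ref{lem:time_mollification}, $\|r_h\|_{L^\infty(\tau,T;L^2(\Omega))} \to 0$. Hence $(\tilde v_h-u)(T)\to 0$ in $L^2(\Omega)$, so the last term of $E_h$ tends to $0$. Since $D\tilde v_h \to Du$ in $L^2(\Omega\times(\tau,T),\R^n)$, a subsequence argument combined with the dominated convergence theorem and the $L^1$-bound \eqref{eq:dominating_f} shows the first term of $E_h$ tends to $0$. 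For the middle term, expanding $|\tilde v_h-u|^2$, using Cauchy--Schwarz in $x$ on the cross term together with $\tfrac1h\int_\tau^T e^{-2(t-\tau)/h}\,\dt \to \tfrac12$ and $\tfrac1h\int_\tau^T e^{-(t-\tau)/h}\,\dt \to 1$, one obtains
\[
	\liminf_{h\downarrow0} \tfrac1h \iint_{\Omega\times(\tau,T)} |\tilde v_h - u|^2 \,\dx\dt \ge \tfrac12 \|(v-u)(\tau)\|_{L^2(\Omega)}^2 ,
\]
which gives the desired bound on $\limsup_{h\downarrow0} E_h$.

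The only genuinely delicate point is this last estimate: one must see that the ``wrong'' initial datum $v(\tau)\ne u(\tau)$ of the mollification produces, in the limit $h\downarrow0$, exactly the final-time term $-\tfrac12\|(v-u)(\tau)\|_{L^2(\Omega)}^2$ that is absent from the $\Omega_T$-inequality, while the mollification error $r_h$ contributes nothing. This is handled by the reverse-triangle/Cauchy--Schwarz estimate above and uses only $u \in C^0([0,T];L^2(\Omega))$, no extra regularity of $u$ in time; everything else is routine bookkeeping with the decomposition of the cylinder.
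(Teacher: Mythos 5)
Your proof is correct and takes a genuinely different route from the paper. The paper instead introduces a cut-off function $\xi_\theta$ supported on $[0,\tau]$ (decreasing linearly on $(\tau-\theta,\tau]$) and tests with $v_\theta := \xi_\theta v + (1-\xi_\theta)[u]_h$, where $[u]_h$ is the mollification of $u$ on all of $[0,T]$ with initial datum $u_o$; it then passes to the limit first in $\theta\downarrow 0$ and afterwards in $h\downarrow 0$, with the final-time boundary term $-\tfrac12\|(v-u)(\tau)\|_{L^2(\Omega)}^2$ emerging from the $\limsup_{\theta\downarrow0}$ of $\iint\partial_t v_\theta(v_\theta-u)$ and the subsequent identification of $[u]_h(\tau)$ with $u(\tau)$. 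You avoid the cut-off entirely and the two-parameter limit: you glue $v$ on $\Omega_\tau$ to a mollification of $u$ restarted at time $\tau$ with the ``wrong'' initial datum $v(\tau)$, so that the exponential memory $e^{-(t-\tau)/h}(v(\tau)-u(\tau))$ in $\tilde v_h-u$, fed into the quadratic dissipation term $-\tfrac1h\iint|\tilde v_h-u|^2$, produces exactly the factor $\tfrac12$ and the correct boundary term in a single limit $h\downarrow0$. This is cleaner, but it silently uses that $v(\tau)\in W^{1,\infty}(\Omega)$ with $\|Dv(\tau)\|_{L^\infty}\le L$ and $v(\tau)=u_o$ on $\partial\Omega$; these facts do hold (for a.e.\ $t<\tau$ the slice $v(t)$ is $L$-Lipschitz and equals $u_o$ on $\partial\Omega$, and by $C^0([0,\tau];L^2)$-continuity together with Arzel\`a--Ascoli the same passes to the limit $t\uparrow\tau$), but you should say so, since in the paper's construction the mollification's initial datum is $u_o$, which is admissible without argument. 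The paper's two-parameter scheme, while more elaborate, is thus slightly more robust on this admissibility point; your approach is shorter and exposes more transparently where the boundary term at $\tau$ comes from.
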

\begin{proof}
For $\theta\in(0,\tau)$, consider the cut-off function
\[
	\xi_{\theta}(t):=\chi_{[0,\tau-\theta]}(t)+\frac{\tau-t}{\theta}\chi_{(\tau-\theta,\tau]}(t).
\]
For $v\in K_{u_o}^{L}(\Omega_{\tau})$ satisfying $\partial_{t}v\in L^{2}(\Omega_{\tau})$
we define a function $v_{\theta} \colon \Omega_{T}\rightarrow\mathbb{R}$
by 
\[
	v_{\theta}:=\xi_{\theta}v+(1-\xi_{\theta})[u]_{h},
\]
where $\xi_{\theta}v$ has been extended to $\Omega_{T}$ by zero and $[u]_h$ is defined according to \eqref{eq:time_mollification} with initial datum $u_o$. Then
we have $v_{\theta}\in K_{u_o}^{L}(\Omega_{T})$ with $\partial_{t}v_{\theta}\in L^{2}(\Omega_{T})$,
and therefore we may use $v_{\theta}$ as a comparison map for $u$
in the variational inequality. This yields
\begin{align}
	\iint_{\Omega_{T}}f(t,Du)\,\dx\dt & \leq \iint_{\Omega_{T}}\partial_{t}v_{\theta}(v_{\theta}-u)+f(t,Dv_{\theta})\,\dx\dt\nonumber \\
	 &\phantom{=}+
	 \tfrac{1}{2} \left\Vert v(0)-u_o\right\Vert _{L^{2}(\Omega)}^{2}
	 -\tfrac{1}{2} \left\Vert ([u]_{h}-u)(T)\right\Vert _{L^{2}(\Omega)}^{2}. \label{eq:local time 1}
\end{align}
The first term on the right-hand side of (\ref{eq:local time 1}) is identical
to the one in \cite[Equation (3.2)]{BDMS}
and can be estimated in the same way to obtain
\begin{align*}
	& \limsup_{\theta\rightarrow0}\iint_{\Omega_{T}}\partial_{t}v_{\theta}(v_{\theta}-u)\,\dx\dt \\
	& \phantom{=} \leq \iint_{\Omega_{\tau}}\partial_{t}v(v-u)\,\dx\dt 
	+\iint_{\Omega\times(\tau,T)}\partial_{t}[u]_{h}([u]_{h}-u)\,\dx\dt
	-\tfrac{1}{2}\int_{\Omega}(v-[u]_{h})^{2}(\tau)\,\dx\\
	& \phantom{=}\phantom{leq}+\int_{\Omega}([u]_{h}-u)(v-[u]_{h})(\tau)\,\dx.
\end{align*}
The second term on the right-hand side of (\ref{eq:local time 1})
is given by
\begin{align*}
\iint_{\Omega_{T}}f(t,Dv_{\theta})\,\dx\dt & =  \iint_{\Omega\times(\tau-\theta,\tau)}f(t,\xi_{\theta}Dv+(1-\xi_{\theta})D[u]_{h})\,\dx\dt\\
 &\phantom{=}+\iint_{\Omega\times(0,\tau-\theta)}f(t,Dv)\,\dx\dt+\iint_{\Omega\times(\tau,T)}f(t,D[u]_{h})\,\dx\dt.
\end{align*}
Since we know that
\begin{align*}
	\|\xi_{\theta}Dv&+(1-\xi_{\theta})D[u]_{h}\|_{L^\infty(\Omega_T,\R^n)}
	\leq
	\|Dv\|_{L^\infty(\Omega_T,\R^n)} + \|D[u]_h\|_{L^\infty(\Omega_T,\R^n)} \\
	&\leq
	\|Dv\|_{L^\infty(\Omega_T,\R^n)} + \|Du_o\|_{L^\infty(\Omega,\R^n)} + \|Du\|_{L^\infty(\Omega_T,\R^n)}
	=: M < \infty,
\end{align*}
by \eqref{eq:dominating_f} we find that
$$
	\bigg| \iint_{\Omega\times(\tau-\theta,\tau)} f(t,\xi_{\theta}Dv+(1-\xi_{\theta})D[u]_{h})\,\dx\dt \bigg|
	\leq
	|\Omega| \int_{\tau-\theta}^\tau g_M(t) \,\dt
	\to 0
$$
in the limit $\theta \downarrow 0$.
Combining the preceding estimates we arrive at
\begin{align}
	\iint_{\Omega_{T}}f(t,Du)\,\dx\dt \nonumber
	&\leq
	\iint_{\Omega\times(0,\tau)}f(t,Dv)\,\dx\dt
	+\iint_{\Omega\times(\tau,T)}f(t,D[u]_{h})\,\dx\dt\nonumber\\
	&\phantom{==} -\tfrac{1}{2}\int_{\Omega}(v-[u]_{h})^{2}(\tau)\,\dx+\int_{\Omega}([u]_{h}-u)(v-[u]_{h})(\tau)\,\dx\nonumber\\
	&\phantom{==} +\iint_{\Omega_{\tau}}\partial_{t}v(v-u)\,\dx\dt
	+\iint_{\Omega\times(\tau,T)}\partial_{t}[u]_{h}([u]_{h}-u)\,\dx\dt\nonumber\\
	&\phantom{==} +\tfrac{1}{2} \left\Vert v(0)- u_o\right\Vert _{L^{2}(\Omega)}^{2}
	-\tfrac{1}{2} \left\Vert ([u]_{h}-u)(T)\right\Vert _{L^{2}(\Omega)}^{2}. \label{eq:local time 2}
\end{align}
Note that $[u]_h \to u$ in $L^\infty(0,T;L^2(\Omega))$ as $h \downarrow 0$, since $u \in C^0([0,T];L^2(\Omega))$.
Further, we have that $D[u]_{h}\rightarrow Du$ pointwise almost everywhere in $\Omega_{T}$ as $h \downarrow 0$ (up to a subsequence) and that
$$
	\left\Vert D[u]_{h}\right\Vert _{L^{\infty}(\Omega_{T},\R^n)}
	\leq
	\|Du_o\|_{L^\infty(\Omega,\R^n)}
	+ \left\Vert Du\right\Vert _{L^{\infty}(\Omega_{T},\R^n)}
	=: L' <\infty
	\quad \text{for any } h>0.
$$
Therefore, assumption \eqref{eq:dominating_f}, the fact that $\Omega$ is bounded and the dominated convergence theorem imply that
\[
\lim_{h\downarrow0} \iint_{\Omega\times(\tau,T)} f(t,D[u]_h) \,\dx\dt = \iint_{\Omega\times(\tau, T)} f(t, Du) \,\dx\dt.
\]
Hence, using that $\partial_{t}[u]_{h}([u]_{h}-u)\leq0$ and letting $h\downarrow0$ in \eqref{eq:local time 2}, we obtain the desired inequality
\begin{align*}
	\iint_{\Omega_{\tau}} f(t,Du)\,\dx\dt
	& \leq
	\iint_{\Omega_{\tau}} \partial_{t}v(v-u) + f(t,Dv)\,\dx\dt\\
 	&\phantom{=}
 	+\tfrac{1}{2} \left\Vert v(0)-u_o\right\Vert _{L^{2}(\Omega)}^{2}
 	-\tfrac{1}{2} \left\Vert (v-u)(\tau)\right\Vert _{L^{2}(\Omega)}^{2} \qedhere.
\end{align*}
\end{proof}

\subsection{The initial condition}
As a consequence of the localization in time principle, we find that variational solutions attain the initial datum $u_o$ in the $C^0$-$L^2$-sense. 
The precise statement is as follows.

\begin{lemma}
\label{lem:initial_datum}
Let $T \in (0,\infty)$, assume that $\Omega\subset\mathbb{R}^{n}$ is bounded and open, and that $f \colon [0,T]\times\mathbb{R}^{n}\rightarrow\mathbb{R}$ satisfies \eqref{eq:integrand}. Consider $u_o\in W^{1,\infty}(\Omega)$ and $L\in(0,\infty]$ such that $\left\Vert Du_o\right\Vert _{L^{\infty}(\Omega,\mathbb{R}^{n})}\leq L$. Suppose that $u$ is a variational solution to \eqref{eq:pde} in $K_{u_o}^{L}(\Omega_{T})$  (in the sense of Definition \ref{def:constrained_solution} if $L<\infty$, in the sense of Definition \ref{def:unconstrainded_solution} if $L=\infty$). Then, there holds
\[
	\lim_{\tau\downarrow0}\left\Vert u(\tau)-u_o\right\Vert _{L^{2}(\Omega)}^{2}=0.
\]
\end{lemma}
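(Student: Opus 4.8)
The plan is to combine the localization-in-time principle (Lemma~\ref{lem:local time}) with the most elementary admissible comparison map, namely the time-independent extension of $u_o$.

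First I would fix $\tau \in (0,T)$. By Lemma~\ref{lem:local time}, the restriction $u|_{\Omega_\tau}$ is a variational solution to \eqref{eq:pde} in $K^L_{u_o}(\Omega_\tau)$ (in the sense of Definition~\ref{def:constrained_solution} if $L<\infty$ and of Definition~\ref{def:unconstrainded_solution} if $L=\infty$). I then use the comparison map $v \colon \Omega_\tau \to \R$, $v(x,t) := u_o(x)$. Since $u_o \in W^{1,\infty}(\Omega)$ with $\|Du_o\|_{L^\infty(\Omega,\R^n)} \le L$, the constant-in-time map $v$ lies in $K^L_{u_o}(\Omega_\tau)$, is an element of $C^0([0,T];L^2(\Omega))$, has $\partial_t v = 0 \in L^2(\Omega_\tau)$, and satisfies $v(0) = u_o$; hence it is admissible in the variational inequality on $\Omega_\tau$. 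Plugging it in, the term $\iint_{\Omega_\tau} \partial_t v\,(v-u)\,\dx\dt$ and the term $\tfrac12\|v(0)-u_o\|_{L^2(\Omega)}^2$ both vanish, and a rearrangement leaves
\[
	\tfrac12\,\|u(\tau) - u_o\|_{L^2(\Omega)}^2
	\leq
	\iint_{\Omega_\tau} f(t,Du_o) - f(t,Du) \,\dx\dt
	\leq
	\iint_{\Omega_\tau} |f(t,Du_o)| + |f(t,Du)| \,\dx\dt .
\]

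Next I would invoke the dominating function from \eqref{eq:dominating_f}. Setting $L' := \max\{\|Du\|_{L^\infty(\Omega_T,\R^n)}, \|Du_o\|_{L^\infty(\Omega,\R^n)}\}$, which is finite both when $L<\infty$ (then $L' \le L$) and when $L=\infty$ (since any variational solution has $Du \in L^\infty$), Lemma~\ref{lem:equiL1} supplies $g_{L'} \in L^1(0,T)$ with $|f(t,Du_o)| \le g_{L'}(t)$ and $|f(t,Du)| \le g_{L'}(t)$ for a.e.\ $t$. Therefore the right-hand side above is bounded by $2|\Omega| \int_0^\tau g_{L'}(t)\,\dt$, and letting $\tau \downarrow 0$ the absolute continuity of the Lebesgue integral forces this quantity to zero, which yields the assertion.

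There is no genuine obstacle here; the only points that need a little care are verifying that the time-independent extension of $u_o$ really is an admissible comparison map (in particular that it has the prescribed lateral boundary values, the required gradient bound, and lies in $C^0([0,T];L^2(\Omega))$ with square-integrable time derivative) and choosing the dominating exponent $L'$ so that the estimate covers the constrained case $L<\infty$ and the unconstrained case $L=\infty$ simultaneously.
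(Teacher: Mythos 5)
Your proposal is correct and coincides with the paper's own proof: both restrict to $\Omega_\tau$ via Lemma~\ref{lem:local time}, insert the time-independent comparison map $v(x,t)=u_o(x)$, and bound the resulting right-hand side by $2|\Omega|\int_0^\tau g_M(t)\,\dt$ with $M=\max\{\|Du\|_{L^\infty(\Omega_T,\R^n)},\|Du_o\|_{L^\infty(\Omega,\R^n)}\}$ from Lemma~\ref{lem:equiL1}, letting $\tau\downarrow 0$ at the end.
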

\begin{proof}
By Lemma \ref{lem:local time}, the function $u$ is a variational solution in any smaller cylinder
$\Omega_{\tau}$, $\tau \in (0,T]$. Using $v \colon \Omega_{\tau}\rightarrow\mathbb{R}$,
$v(x,t):=u_o(x)$ as a comparison function for $u$ and taking \eqref{eq:dominating_f} with $M:= \max\{ \|Du\|_{L^{\infty}(\Omega_{T},\mathbb{R}^{n})}, \|Du_o\|_{L^{\infty}(\Omega,\mathbb{R}^{n})} \}$ into account, we obtain that
\[
\tfrac{1}{2}\left\Vert u(\tau)-u_o\right\Vert _{L^{2}(\Omega)}\leq\iint_{\Omega_{\tau}}f(t,Du_o)-f(t,Du)\,\dx\dt\leq2\left|\Omega\right|\int_{0}^{\tau}g_{M}(t)\,\dt.
\]
Since $g_M \in L^1(0,T)$, this implies the claim.
\end{proof}

\subsection{Comparison principle}
The following comparison principle ensures in particular that variational solutions to the problems considered in the present paper are unique.
\begin{theorem}[Comparison principle]
\label{thm:comparison principle} Let $T \in (0,\infty)$, assume that $\Omega \subset \mathbb{R}^n$ is bounded and open, and that $f \colon [0,T] \times \R^n \to \R$ satisfies \eqref{eq:integrand}. Let $L \in (0, \infty]$ and suppose that $u$ and $\tilde{u}$ are variational
solutions to \eqref{eq:pde} in $K^{L}(\Omega_{T})$ (in the sense of Definition \ref{def:constrained_solution} if $L < \infty$ and in the sense of Definition \ref{def:unconstrainded_solution} if $L = \infty$) such that
$\|Du(0)\|_{L^{\infty}(\Omega,\R^n)}$ and $\|D\tilde{u}(0)\|_{L^{\infty}(\Omega,\R^n)}$ are bounded by $L$ if $L \in (0,\infty)$ and finite if $L=\infty$, respectively.
Then the assumption that
\begin{equation*}
u\leq\tilde{u} \quad \text{on} \quad \partial_{\mathcal{P}}\Omega_{T}
\end{equation*}
implies
\begin{equation*}
u\le\tilde{u} \quad \text{in} \quad \Omega_{T}.
\end{equation*}
\end{theorem}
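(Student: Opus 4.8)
By Lemma~\ref{lem:local time} it suffices to prove $u(T)\le\tilde u(T)$ a.e.\ in $\Omega$ for an arbitrary cylinder $\Omega_T$ and then to apply this on every sub-cylinder $\Omega_\tau$, $\tau\in(0,T]$; together with the attainment of the (ordered) initial data from Lemma~\ref{lem:initial_datum} and with $u,\tilde u\in C^0([0,T];L^2(\Omega))$, this gives $u\le\tilde u$ in $\Omega_T$. Let $u_o,\tilde u_o\in W^{1,\infty}(\Omega)$ be the time-independent lateral data of $u$ and $\tilde u$; the assumption $u\le\tilde u$ on $\partial_{\mathcal{P}}\Omega_T$ forces $u_o\le\tilde u_o$ on $\overline\Omega$. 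Put $m:=\min\{u,\tilde u\}$, $M:=\max\{u,\tilde u\}$, $w:=(u-\tilde u)_+$ and $w':=(\tilde u-u)_+$, so that $m,M\in K^L(\Omega_T)$, $m+M=u+\tilde u$, $M-m=w+w'$, $w\,w'=0$, $u-m=M-\tilde u=w$, $w(0)=0$ and $w=0$ on $\partial_{\mathcal{P}}\Omega_T$; moreover, since $Du=D\tilde u$ a.e.\ on $\{u=\tilde u\}$, one has $\{Dm,DM\}=\{Du,D\tilde u\}$ and hence $f(t,Dm)+f(t,DM)=f(t,Du)+f(t,D\tilde u)$, a.e.\ in $\Omega_T$.

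The plan is to test the variational inequality for $u$ with $v_1:=[m]_h$ and the one for $\tilde u$ with $v_2:=[M]_h$, where the time mollifications \eqref{eq:time_mollification} are taken with initial data $m(0)=u_o$ and $M(0)=\tilde u_o$. These are admissible: the mollification is a convex combination, so $\|Dv_j\|_{L^\infty(\Omega_T,\R^n)}\le L$; the lateral traces are the time-independent functions $u_o$, $\tilde u_o$; $v_1(0)=u_o$, $v_2(0)=\tilde u_o$; and $\partial_t v_j=\tfrac1h(\,\cdot\,-v_j)\in L^2(\Omega_T)$ by \eqref{eq:ode_mollification}. A further ingredient I would record first is an \emph{energy estimate}: testing the variational inequality for $u$ with $v=[u]_h$ (initial datum $u_o$) and using $\partial_t[u]_h\,([u]_h-u)=-\tfrac1h([u]_h-u)^2$ yields
\[
	\tfrac1h\|[u]_h-u\|_{L^2(\Omega_T)}^2\le\iint_{\Omega_T}\big|f(t,D[u]_h)-f(t,Du)\big|\,\dx\dt\longrightarrow 0\quad\text{as }h\downarrow0,
\]
the convergence by Lemma~\ref{lem:equiL1} and dominated convergence (with $D[u]_h=[Du]_h\to Du$ in $L^2(\Omega_T,\R^n)$ and $f(t,\cdot)$ continuous); likewise $\tfrac1h\|[\tilde u]_h-\tilde u\|_{L^2(\Omega_T)}^2\to0$.

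Adding the two variational inequalities and letting $h\downarrow0$: by Lemma~\ref{lem:equiL1} and dominated convergence the $f$-integrals on the right, $\iint_{\Omega_T}f(t,[Dm]_h)+f(t,[DM]_h)\,\dx\dt$, converge to $\iint_{\Omega_T}f(t,Dm)+f(t,DM)\,\dx\dt=\iint_{\Omega_T}f(t,Du)+f(t,D\tilde u)\,\dx\dt$, which cancels the left-hand side; and since $[m]_h\to m$, $[M]_h\to M$ in $C^0([0,T];L^2(\Omega))$ (Lemma~\ref{lem:time_mollification}), one has $(v_1-u)(T)\to m(T)-u(T)=-w(T)$ and $(v_2-\tilde u)(T)\to M(T)-\tilde u(T)=w(T)$ in $L^2(\Omega)$, so $-\tfrac12\|(v_1-u)(T)\|^2-\tfrac12\|(v_2-\tilde u)(T)\|^2\to-\|w(T)\|_{L^2(\Omega)}^2$. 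Hence everything reduces to showing
\[
	\limsup_{h\downarrow0}\ \iint_{\Omega_T}\big[\partial_t v_1\,(v_1-u)+\partial_t v_2\,(v_2-\tilde u)\big]\,\dx\dt\ \le\ \tfrac12\|w(T)\|_{L^2(\Omega)}^2,
\]
since then $\|w(T)\|_{L^2(\Omega)}^2\le\tfrac12\|w(T)\|_{L^2(\Omega)}^2$, forcing $w(T)=0$, which by the first paragraph completes the proof.

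This last estimate — the treatment of the time-derivative terms — is the heart of the argument and the step I expect to be the main obstacle. Using \eqref{eq:ode_mollification} and the relations $u-m=M-\tilde u=w$, a direct computation gives (with $L^2(\Omega_T)$-norms)
\[
	\iint_{\Omega_T}\big[\partial_t v_1\,(v_1-u)+\partial_t v_2\,(v_2-\tilde u)\big]\,\dx\dt=-\tfrac1h\|m-v_1\|^2-\tfrac1h\|M-v_2\|^2+\iint_{\Omega_T}w\,\partial_t[M-m]_h\,\dx\dt,
\]
and, splitting $[M-m]_h=[w]_h+[w']_h$ (initial data $0$ and $\tilde u_o-u_o\ge0$),
\[
	\iint_{\Omega_T}w\,\partial_t[M-m]_h\,\dx\dt=\tfrac12\|[w]_h(T)\|_{L^2(\Omega)}^2+\tfrac1h\|w-[w]_h\|_{L^2(\Omega_T)}^2-\tfrac1h\iint_{\Omega_T}w\,[w']_h\,\dx\dt,
\]
where the last term is $\le0$ because $w,[w']_h\ge0$. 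The only ``wrong sign'' term is $\tfrac1h\|w-[w]_h\|^2$, which cannot be treated by integration by parts since $u,\tilde u$ are merely continuous into $L^2$. Instead, one uses that $[w]_h$ equals both $[u]_h-v_1$ and $v_2-[\tilde u]_h$, so that
\[
	w-[w]_h=\tfrac12\big((u-[u]_h)-(m-v_1)\big)+\tfrac12\big((M-v_2)-(\tilde u-[\tilde u]_h)\big),
\]
and the elementary inequality $(a+b+c+d)^2\le4(a^2+b^2+c^2+d^2)$ yields $\tfrac1h\|w-[w]_h\|^2\le\tfrac1h\|u-[u]_h\|^2+\tfrac1h\|m-v_1\|^2+\tfrac1h\|M-v_2\|^2+\tfrac1h\|\tilde u-[\tilde u]_h\|^2$. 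The terms $\tfrac1h\|m-v_1\|^2$ and $\tfrac1h\|M-v_2\|^2$ are absorbed by the two negative terms above, $\tfrac1h\|u-[u]_h\|^2$ and $\tfrac1h\|\tilde u-[\tilde u]_h\|^2$ vanish as $h\downarrow0$ by the energy estimate, and $\|[w]_h(T)\|^2\to\|w(T)\|^2$; this gives the required bound. Without the energy estimate as input the one-sided mollification alone does not control $\tfrac1h\|w-[w]_h\|^2$, which is exactly why that step is the delicate point.
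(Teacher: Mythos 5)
Your proof is correct, but it takes a genuinely different route from the paper's. The paper chooses as comparison maps the \emph{min/max of mollifications}, $v:=\min([u]_h,[\tilde u]_h)$ and $w:=\max([u]_h,[\tilde u]_h)$ (with initial data $u(0)$, $\tilde u(0)$). This has the virtue that $\{Dv,Dw\}=\{D[u]_h,D[\tilde u]_h\}$ pointwise, so the $f$-terms cancel to $f(t,D[u]_h)+f(t,D[\tilde u]_h)-f(t,Du)-f(t,D\tilde u)$ immediately, and the time-derivative terms can be estimated by a direct pointwise computation using the indicators $\chi_{\{[u]_h\le[\tilde u]_h\}}$, $\chi_{\{[\tilde u]_h<[u]_h\}}$, landing on the bound $\tfrac12\partial_t\big(([u]_h-[\tilde u]_h)_+^2\big)+h\big((\partial_t[u]_h)^2+(\partial_t[\tilde u]_h)^2\big)$. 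You instead take the \emph{mollifications of min/max}, $v_1:=[\min(u,\tilde u)]_h$ and $v_2:=[\max(u,\tilde u)]_h$. This makes the ODE $\partial_t[\,\cdot\,]_h=\tfrac1h(\,\cdot\,-[\,\cdot\,]_h)$ directly available for $v_1,v_2$, so the time-derivative terms reduce to the neat identity $-\tfrac1h\|m-v_1\|^2-\tfrac1h\|M-v_2\|^2+\iint w\,\partial_t[M-m]_h$, at the cost of having to control the ``wrong-sign'' term $\tfrac1h\|w-[w]_h\|^2$, which you do via the averaging decomposition $w-[w]_h=\tfrac12\big((u-[u]_h)-(m-v_1)\big)+\tfrac12\big((M-v_2)-(\tilde u-[\tilde u]_h)\big)$. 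Both approaches feed on the same energy estimate: your $\tfrac1h\|u-[u]_h\|_{L^2(\Omega_T)}^2\le\iint_{\Omega_T}\big(f(t,D[u]_h)-f(t,Du)\big)\,\dx\dt\to0$ is identical to the paper's (3.6) $\iint h(\partial_t[u]_h)^2\le\iint f(t,D[u]_h)-f(t,Du)$ since $h(\partial_t[u]_h)^2=\tfrac1h(u-[u]_h)^2$. The paper's computation is more pedestrian but elementary; yours is cleaner algebraically but hinges on the observation that $[w]_h$ admits two expressions, $[u]_h-v_1$ and $v_2-[\tilde u]_h$, which enables the averaging. Both are sound.

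Two small remarks for the record. First, the citation of Lemma~\ref{lem:initial_datum} in your opening paragraph is unnecessary: once you show $w(\tau)=0$ for every $\tau\in(0,T]$ and have $u,\tilde u\in C^0([0,T];L^2(\Omega))$, the conclusion $u\le\tilde u$ in $\Omega_T$ follows directly, and at $t=0$ it is part of the hypothesis. Second, in your energy estimate, the convergence $\iint\big(f(t,D[u]_h)-f(t,Du)\big)\dx\dt\to0$ holds for the full sequence, not only along an a.e.-convergent subsequence, because the would-be limit is uniquely determined (a standard subsequence-of-subsequence argument); it may be worth stating this, since you later use it to cancel the $f$-terms in the limit.
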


\begin{proof}
Let $\tau\in(0,T]$. By Lemma \ref{lem:local time}, the functions $u$ and $\tilde{u}$
are variational solutions in $K^{L}(\Omega_{\tau})$. Consider the
functions
\[
v:=\min([u]_{h},[\tilde{u}]_{h})\quad\text{and}\quad w:=\max([u]_{h},[\tilde{u}]_{h}),
\]
where $[u]_h$ and $[\tilde{u}]_h$ denote the mollifications of $u$ and $\tilde{u}$ according to \eqref{eq:time_mollification} with initial values $u(0) \in W^{1,\infty}(\Omega)$ and $\tilde{u}(0) \in W^{1,\infty}(\Omega)$, respectively.
Since the boundary values attained by $u$ and $\tilde{u}$ are independent of time,
we have that $v\in K_{u}^{L}(\Omega_{\tau})$ and $w\in K_{\tilde{u}}^{L}(\Omega_{\tau})$
with $\partial_{t}v,\partial_{t}w\in L^{2}(\Omega_{\tau})$. Therefore
we may use $v$ and $w$ as comparison functions in the variational
inequalities of $u$ and $\tilde{u}$, respectively. Adding the resulting
inequalities and using that $[u]_h(0) = u(0) \leq \tilde{u}(0) = [\tilde{u}]_h(0)$, we obtain
\begin{align}
	0\leq &
	\iint_{\Omega_{\tau}}\partial_{t}v(v-u)+\partial_{t}w(w-\tilde{u})\, \dx \dt \nonumber\\
 	& +\iint_{\Omega_{\tau}}f(t,Dv)-f(t,Du)+f(t,Dw)-f(t,D\tilde{u})\, \dx \dt \nonumber \\
 	& -\tfrac{1}{2}\left\Vert (v-u)(\tau)\right\Vert _{L^{2}(\Omega)}^{2}-\tfrac{1}{2}\left\Vert (w-\tilde{u})(\tau)\right\Vert _{L^{2}(\Omega)}^{2}. 	\label{eq:comparison 1}
\end{align}
Using the identities
\begin{align*}
	\left\{
	\begin{array}{l}
		v-u=\min([u]_{h},[\tilde{u}]_{h})-[u]_{h}-(u-[u]_{h}) =-([u]_{h}-[\tilde{u}]_{h})_{+}-h\partial_{t}[u]_{h}, \\[5pt]
		w-\tilde{u} = ([u]_{h}-[\tilde{u}]_{h})_{+} - h\partial_{t}[\tilde{u}]_{h},
	\end{array}
	\right.
\end{align*}
we compute that
\begin{align*}
	&\partial_{t}v (v-u)+\partial_{t}w(w-\tilde{u})\\
	&=
	\big(\partial_{t}[u]_{h} \chi_{\{ [u]_{h}\leq[\tilde{u}]_{h}\} }
	+\partial_{t}[\tilde{u}]_{h}\chi_{\{ [\tilde{u}]_{h}<[u]_{h}\} }\big)
	\big(- \big([u]_{h}-[\tilde{u}]_{h} \big)_{+}-h\partial_{t}[u]_{h}\big)\\
	&\phantom{=}
	+\big(\partial_{t}[\tilde{u}]_{h} \chi_{\{ [u]_{h}\leq[\tilde{u}]_{h}\} }
	+\partial_{t}[u]_{h} \chi_{\{ [\tilde{u}]_{h}<[u]_{h}\} }\big)
	\big( \big([u]_{h}-[\tilde{u}]_{h} \big)_{+}-h\partial_{t}[\tilde{u}]_{h}\big)\\
	&=
	\big( \partial_{t}[\tilde{u}]_{h} \big( [u]_{h}-[\tilde{u}]_{h} \big)_{+}
	-\partial_{t}[u]_{h} ([u]_{h} - [\tilde{u}]_{h})_{+}
	- h(\partial_{t}[u]_{h})^{2}
	- h(\partial_{t}[\tilde{u}]_{h})^{2}\big)\\
	&\phantom{=}
	\cdot\chi_{\{ [u]_{h}\leq[\tilde{u}]_{h}\} }\\
	&\phantom{=}
	+\big( \partial_{t}[u]_{h} \big( [u]_{h}-[\tilde{u}]_{h} \big)_{+}
	- \partial_{t}[\tilde{u}]_{h} \big( [u]_{h}-[\tilde{u}]_{h} \big)_{+}
	- h\partial_{t}[\tilde{u}]_{h}\partial_{t}[u]_{h}
	- h\partial_{t}[u]_{h}\partial_{t}[\tilde{u}]_{h}\big)\\
	&\phantom{=}
	\cdot\chi_{\{ [\tilde{u}]_{h}<[u]_{h}\} }\\
	&\leq
	\big( \partial_{t}[u]_{h} \big( [u]_{h}-[\tilde{u}]_{h} \big)_{+}
	-\partial_{t}[\tilde{u}]_{h} \big( [u]_{h}-[\tilde{u}]_{h} \big)_{+}
	- h\partial_{t}[\tilde{u}]_{h} \partial_{t}[u]_{h}
	- h\partial_{t}[u]_{h}\partial_{t}[\tilde{u}]_{h} \big)\\
	&\phantom{=}
	\cdot\chi_{\{ [\tilde{u}]_{h}<[u]_{h} \} }\\
	&=
	\partial_{t} \big( [u]_{h}-[\tilde{u}]_{h} \big) \big([u]_{h}-[\tilde{u}]_{h} \big)_{+}
	-2h \partial_{t}[u]_{h} \partial_{t}[\tilde{u}]_{h}
	\chi_{\{ [\tilde{u}]_{h}<[u]_{h} \} }\\
	&\leq
	\tfrac{1}{2} \partial_{t} \big( \big([u]_{h}-[\tilde{u}]_{h} \big)_{+})^{2} \big)
	+ h \big( \big(\partial_{t}[u]_{h} \big)^{2} + \big( \partial_{t}[\tilde{u}]_{h} \big)^{2} \big).
\end{align*}
Therefore, taking into account that $[u]_h(0) = u(0) \leq \tilde{u}(0) = [\tilde{u}]_h(0)$,
we find that
\begin{align}
	\iint_{\Omega_{\tau}} &\partial_{t}v(v-u)+\partial_{t}w(w-\tilde{u})\, \dx \dt \nonumber \\
	& \leq
 	\tfrac{1}{2} \big\Vert \big([u]_{h}-[\tilde{u}]_{h} \big)_{+}(\tau) \big\Vert _{L^{2}(\Omega)}^{2}
	+\iint_{\Omega_{\tau}} h \big( \big(\partial_{t}[u]_{h} \big)^{2} + \big( \partial_{t}[\tilde{u}]_{h} \big)^{2} \big)\, \dx \dt.
	\label{eq:comparison 2}
\end{align}
Furthermore, using $[u]_{h}$ as a comparison function for $u$ and omitting the boundary term at time $\tau$ on the right-hand side of the variational inequality, we obtain
\begin{align}
	\iint_{\Omega_{\tau}} h \big( \partial_{t}[u]_{h} \big)^{2}\, \dx \dt
	&=
	- \iint_{\Omega_{\tau}} \partial_{t}[u]_{h} \big( [u]_{h} - u \big) \, \dx \dt \nonumber \\
 	& \leq
 	\iint_{\Omega_{\tau}} f \big( t, D[u]_{h} \big) - f(t,Du)\, \dx \dt\label{eq:comparison 3}
\end{align}
and a similar inequality holds for $\tilde{u}.$ Observe also that
\begin{align}
 	f(t,Dv) &- f(t,Du)+f(t,Dw)-f(t,D\tilde{u})\nonumber \\
 	& =
 	\chi_{\{ [u]_{h}\leq[\tilde{u}]_{h} \} } f\big(t, D[u]_{h} \big)
 	+\chi_{\{ [\tilde{u}]_{h}<[u]_{h} \} } f\big(t, D[\tilde{u}]_{h} \big)
 	-f(t,Du) \nonumber \\
	&\phantom{=}
	+\chi_{\{ [u]_{h}\leq[\tilde{u}]_{h}\} } f\big(t, D[\tilde{u}]_{h} \big)
	+\chi_{\{ [\tilde{u}]_{h}<[u]_{h}\} } f\big(t, D[u]_{h} \big)
	-f(t, D\tilde{u})\nonumber \\
 	& =
 	f\big(t,D[u]_{h}\big)-f(t,Du)+f\big(t,D[\tilde{u}]_{h}\big)-f(t,D\tilde{u}).\label{eq:comparison 4}
\end{align}
Combining the estimates \eqref{eq:comparison 2}, \eqref{eq:comparison 3}
and \eqref{eq:comparison 4} with \eqref{eq:comparison 1} we arrive
at
\begin{align}
	-\tfrac{1}{2} \big\Vert &\big([u]_{h}-[\tilde{u}]_{h}\big)_{+}(\tau)\big\Vert _{L^{2}(\Omega)}^{2}
 	+\tfrac{1}{2} \Vert (v-u)(\tau) \Vert _{L^{2}(\Omega)}^{2}
 	+\tfrac{1}{2} \Vert (w-\tilde{u})(\tau) \Vert _{L^{2}(\Omega)}^{2} \nonumber \\
	&\leq
 	2\iint_{\Omega_{\tau}} f\big(t,D[u]_{h}\big)-f(t,Du)+f\big(t,D[\tilde{u}]_{h}\big)-f(t,D\tilde{u})\, \dx \dt.
 	\label{eq:comparison 5}
\end{align}
By the same argument as in the end of proof of Lemma \ref{lem:local time} involving the dominated convergence theorem, the integral on the right-hand side of \eqref{eq:comparison 5} vanishes in the limit $h \downarrow 0$.
Writing $v-u = -([u]_{h}-[\tilde{u}]_{h})_{+} + [u]_h - u$ and
$w - \tilde{u} = ([u]_{h}-[\tilde{u}]_{h})_{+} + [\tilde{u}]_h - \tilde{u}$
and using that $[u]_{h}\rightarrow u$ and $[\tilde{u}]_h \to \tilde{u}$
in $L^{\infty}([0,\tau],L^{2}(\Omega))$ as $h \downarrow 0$ since $u, \tilde{u} \in C^0([0,T];L^2(\Omega))$, we conclude that
\begin{align*}
	\lim_{h \downarrow 0}
	&\Big(
	-\tfrac{1}{2} \big\Vert \big([u]_{h}-[\tilde{u}]_{h}\big)_{+}(\tau)\big\Vert _{L^{2}(\Omega)}^{2}
 	+\tfrac{1}{2} \Vert (v-u)(\tau) \Vert _{L^{2}(\Omega)}^{2}
 	+\tfrac{1}{2} \Vert (w-\tilde{u})(\tau) \Vert _{L^{2}(\Omega)}^2 \Big)\\
 	&=
 	\tfrac12 \big\Vert (u-\tilde{u})_{+}(\tau)\big\Vert _{L^{2}(\Omega)}^{2}.
\end{align*}
Hence, taking the limit $h \downarrow 0$ in \eqref{eq:comparison 5}, we infer
$$
	\tfrac12 \big\Vert (u-\tilde{u})_{+}(\tau)\big\Vert _{L^{2}(\Omega)}^{2}
	\leq 0,
$$
which implies that $u\leq\tilde{u}$ in $\Omega_{\tau}$. Since $\tau$
was arbitrary, the claim follows.
\end{proof}

\subsection{Maximum principle and localization in space for regular solutions}
In this section, we consider more regular variational solutions $u$ satisfying $\partial_{t}u \in L^{2}(\Omega_{T})$.
As a consequence, $u$ is directly admissible as comparison map in its variational inequality without regularization with respect to the time variable.
Further, due to the requirements of the proof of the existence result in Section~\ref{sec:unconstrained_bsc}, we will take time-dependent boundary values $\left. u \right|_{\Omega \times (0,T)}$ into account here.
In particular, the proof of the comparison principle in Theorem~\ref{thm:comparison principle} is easily adapted to allow time-dependent boundary values if $\partial_{t}u$ and $\partial_t \tilde{u}$ are contained in $L^{2}(\Omega_{T})$ by using $\min(u,\tilde{u})$ and $\max(u,\tilde{u})$ as comparison maps in the variational inequalities satisfied by $u$ and $\tilde{u}$, respectively, and proceeding in a similar way as above.
However, most arguments can be simplified, since mollification with respect to time is not necessary in the present situation.
This allows us to deduce the following maximum principle.
\begin{lemma}[Maximum principle]
\label{lem:maximum_principle}
Let $T \in (0,\infty)$, assume that $\Omega\subset\mathbb{R}^{n}$ is open and bounded, and that $f \colon [0,T]\times\mathbb{R}^{n}\rightarrow\mathbb{R}$ satisfies \eqref{eq:integrand}. Consider $L\in(0,\infty]$ and functions $u,\tilde{u}\in K^{L}(\Omega_{T})$ such that $\partial_{t}u,\partial_{t}\tilde{u}\in L^{2}(\Omega_{T})$. Suppose moreover that $\left\Vert Du(0)\right\Vert _{L^{\infty}(\Omega,\mathbb{R}^{n})}$ and $\left\Vert D\tilde{u}(0)\right\Vert _{L^{\infty}(\Omega,\mathbb{R}^{n})}$ are bounded by $L$ if $L \in (0,\infty)$ and finite if $L=\infty$.
Finally, assume that for any $\tau \in (0,T]$ the function $u$ satisfies the variational inequality
\begin{align}\label{eq:max_principle 1}
\iint_{\Omega_{\tau}}f(t,Du)\,\dx\dt\leq & \iint_{\Omega_{\tau}} \partial_{t}v(v-u)+f(t,Dv)\,\dx\dt\nonumber\\
 & +\tfrac{1}{2}\left\Vert u(0)-v(0)\right\Vert^2 _{L^{2}(\Omega)}-\tfrac{1}{2}\left\Vert u(\tau)-v(\tau)\right\Vert^2 _{L^{2}(\Omega)}
\end{align}
whenever $v\in K^{L}(\Omega_{\tau})$ with $\partial_{t}v\in L^{2}(\Omega_{\tau})$
and $v=u$ on $\Omega \times (0,\tau)$, and that $\tilde{u}$ fulfills the analogical inequality.
Then
\[
\sup_{\Omega_{T}}(u-\tilde{u}) = \sup_{\partial_{\mathcal{P}}\Omega_{T}}(u-\tilde{u}).
\]
\end{lemma}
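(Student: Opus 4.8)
The plan is to mimic the proof of the comparison principle in Theorem~\ref{thm:comparison principle}, exploiting that here $\partial_t u, \partial_t \tilde u \in L^2(\Omega_T)$ so that no mollification in time is needed: $u$ and $\tilde u$ are themselves admissible in their own variational inequalities, and functions of the form $\min(u,\tilde u+M)$, $\max(\tilde u,u-M)$ can be used directly as comparison maps. Set $M := \sup_{\partial_{\mathcal P}\Omega_T}(u - \tilde u)$; since $u,\tilde u\in L^\infty(\Omega_T)$ this is a finite number. The bound $\sup_{\Omega_T}(u-\tilde u)\ge M$ is routine: because $Du, D\tilde u\in L^\infty(\Omega_T,\R^n)$, for a.e.\ $t$ the map $x\mapsto (u-\tilde u)(x,t)$ is Lipschitz up to $\partial\Omega$, and $u(0)-\tilde u(0)\in W^{1,\infty}(\Omega)=C^{0,1}(\overline\Omega)$, so together with $u,\tilde u\in C^0([0,T];L^2(\Omega))$ the essential supremum over $\Omega_T$ dominates the one over $\partial_{\mathcal P}\Omega_T$. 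Hence it suffices to show $u-\tilde u\le M$ a.e.\ in $\Omega_T$.

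To this end introduce $\phi := (u - \tilde u - M)_+$. On the lateral boundary $\partial\Omega\times(0,T)$ we have $u-\tilde u\le M$, and since $u(0),\tilde u(0)$ are continuous up to $\partial\Omega$ also $u(0)-\tilde u(0)\le M$ on $\overline\Omega$; therefore $\phi=0$ on $\partial\Omega\times(0,T)$ and $\phi(0)=0$ in $L^2(\Omega)$. Moreover $\phi$ is bounded and lies in $C^0([0,T];L^2(\Omega))$ with $D\phi\in L^\infty(\Omega_T,\R^n)$ and $\partial_t\phi\in L^2(\Omega_T)$, and by the chain rule for the positive part of a function in $H^1(0,T;L^2(\Omega))$ one has $\partial_t\phi = \chi_{\{u-\tilde u>M\}}\,\partial_t(u-\tilde u)$ a.e. I would then define the comparison maps
\[
	v_1 := u - \phi = \min(u,\tilde u + M), \qquad v_2 := \tilde u + \phi = \max(\tilde u,u - M).
\]
Both lie in $K^L(\Omega_\tau)$ for every $\tau\in(0,T]$ (the spatial Lipschitz bound $L$ is inherited by $\min$ and $\max$, and $\tilde u+M$ has the same gradient as $\tilde u$), have $L^2$ time derivatives, and agree with $u$, respectively $\tilde u$, on $\partial\Omega\times(0,\tau)$ since $\phi$ vanishes there; so $v_1$ is admissible in \eqref{eq:max_principle 1} for $u$ and $v_2$ in the analogous inequality for $\tilde u$, on every sub-cylinder $\Omega_\tau$.

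Next I would plug $v_1$ into the inequality for $u$ and $v_2$ into the inequality for $\tilde u$ over $\Omega_\tau$ and add them. With $v_1-u=-\phi$, $v_2-\tilde u=\phi$ and $\phi(0)=0$, the boundary terms at time $0$ drop out and those at time $\tau$ sum to $-\|\phi(\tau)\|_{L^2(\Omega)}^2$. The gradient terms cancel exactly: on $\{\phi>0\}=\{u-\tilde u>M\}$ one has $Dv_1=D\tilde u$ and $Dv_2=Du$, while on $\{\phi=0\}$ one has $Dv_1=Du$ and $Dv_2=D\tilde u$, so $f(t,Dv_1)+f(t,Dv_2)=f(t,Du)+f(t,D\tilde u)$ a.e., and all four integrands are in $L^1$ by \eqref{eq:dominating_f}. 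After cancellation we are left with
\[
	0 \le \iint_{\Omega_\tau}\bigl(\partial_t v_2-\partial_t v_1\bigr)\phi\,\dx\dt - \|\phi(\tau)\|_{L^2(\Omega)}^2 .
\]
Since $\partial_t v_2-\partial_t v_1=\partial_t(\tilde u-u)+2\partial_t\phi$, and on $\{\phi>0\}$ we have $\partial_t\phi=\partial_t(u-\tilde u)$ whereas on $\{\phi=0\}$ the factor $\phi$ vanishes, one gets $(\partial_t v_2-\partial_t v_1)\phi=\phi\,\partial_t\phi=\tfrac12\partial_t(\phi^2)$ a.e.\ in $\Omega_\tau$; integrating and using $\phi(0)=0$ this yields $\iint_{\Omega_\tau}(\partial_t v_2-\partial_t v_1)\phi\,\dx\dt=\tfrac12\|\phi(\tau)\|_{L^2(\Omega)}^2$. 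Substituting back gives $0\le-\tfrac12\|\phi(\tau)\|_{L^2(\Omega)}^2$, hence $\phi(\tau)=0$ in $L^2(\Omega)$ for every $\tau\in(0,T]$. Thus $\phi\equiv 0$, i.e.\ $u-\tilde u\le M$ a.e.\ in $\Omega_T$, which establishes the claim.

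The step I expect to be the main obstacle is the careful handling of the time-derivative term: justifying the chain-rule identity $\partial_t\phi=\chi_{\{u-\tilde u>M\}}\partial_t(u-\tilde u)$ in $L^2(\Omega_T)$ and checking that $(\partial_t v_2-\partial_t v_1)\phi$ collapses to $\tfrac12\partial_t(\phi^2)$ on both $\{\phi>0\}$ and $\{\phi=0\}$. Everything else---the admissibility of $v_1,v_2$ (preservation of the gradient bound $L$ and of the lateral boundary values under $\min$ and $\max$, together with $\partial_t v_1,\partial_t v_2\in L^2$) and the direction $\sup_{\Omega_T}(u-\tilde u)\ge\sup_{\partial_{\mathcal P}\Omega_T}(u-\tilde u)$---is routine.
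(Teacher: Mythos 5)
Your proof is correct and is essentially the paper's own argument, merely reorganized by a constant shift: the paper sets $\hat{u} := \tilde{u} + M$ with $M := \sup_{\partial_{\mathcal{P}}\Omega_T}(u-\tilde{u})$ and uses $v = \min(u,\hat{u})$, $w = \max(u,\hat{u})$ as comparison maps for $u$ and $\hat{u}$ respectively, whereas you work with $\phi = (u-\tilde{u}-M)_+$ and $v_1 = u-\phi = \min(u,\hat u)$, $v_2 = \tilde u+\phi = \max(u,\hat u)-M$; since $v_2-\tilde u = w-\hat u = \phi$, this produces the same terms. Both arguments add the two variational inequalities, observe that the integrands $f(t,Dv_1)+f(t,Dv_2)$ exactly rearrange to $f(t,Du)+f(t,D\tilde u)$, use $\phi(0)=0$ to kill the initial boundary terms, and collapse the time-derivative contribution to $\tfrac12\partial_t(\phi^2)$ to conclude $\|\phi(\tau)\|_{L^2(\Omega)}=0$ for all $\tau$; the chain-rule identity you flagged as the main subtlety is the same one the paper relies on implicitly, and is standard for $H^1(0,T;L^2(\Omega))\cap L^\infty(\Omega_T)$ functions.
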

\begin{proof}
Let $\tau \in (0,T]$. Define
\[
	\hat{u}:=\tilde{u}+\sup_{\partial_{\mathcal{P}}\Omega_{T}}(u-\tilde{u}).
\]
Then $\hat{u}$ satisfies the variational inequality \eqref{eq:max_principle 1} with its own boundary values, and
\begin{equation}
	u\leq \hat{u}\quad\text{on}\quad\partial_{\mathcal{P}}\Omega_{T}. \label{eq:max_principle 2}
\end{equation}
Consider the functions $v:=\min(u,\hat{u})$ and $w:=\max(u,\hat{u})$.
Then $v,w\in K^{L}(\Omega_{\tau})$ with  $\partial_t v, \partial_t w \in L^2(\Omega_\tau)$ and $v=u$, $w=\hat{u}$ on $\partial\Omega\times(0,\tau)$.
Observe also that $v-u=-(u-\hat{u})_{+}$ and $w-\hat{u}=(u-\hat{u})_{+}$.
Using $v$ and $w$ as comparison functions for $u$ and $\hat{u}$
in the variational inequality \eqref{eq:max_principle 1}, we obtain
\begin{align*}
0& \leq \iint_{\Omega_{\tau}}\partial_{t}v(v-u)+\partial_{t}w(w-\hat{u})\,\dx\dt\\
 	&\phantom{=}+\iint_{\Omega_{\tau}}f(t,Dv)-f(t,Du)+f(t,Dw)-f(t,D\hat{u})\,\dx\dt\\
	&\phantom{=}+\tfrac{1}{2}\left\Vert (v-u)(0)\right\Vert _{L^{2}(\Omega)}^{2}+\tfrac{1}{2}\left\Vert (w-\hat{u})(0)\right\Vert _{L^{2}(\Omega)}^{2}\\
	&\phantom{=}-\tfrac{1}{2}\left\Vert (v-u)(\tau)\right\Vert _{L^{2}(\Omega)}^{2}-\tfrac{1}{2}\left\Vert (w-\hat{u})(\tau)\right\Vert _{L^{2}(\Omega)}^{2}\\
	&=\iint_{\Omega_{\tau}}\tfrac{1}{2}\partial_{t}((u-\hat{u})_{+})^{2}\,\dx\dt-\left\Vert (u-\hat{u})_{+}(\tau)\right\Vert _{L^{2}(\Omega)}^{2}\\
 & =-\tfrac{1}{2}\left\Vert (u-\hat{u})_{+}(\tau)\right\Vert _{L^{2}(\Omega)}^{2},
\end{align*}
where we used that $(v-u)(0)=(w-\hat{u})(0)=0$ and that the
terms with $f$ cancel one another. As $\tau$ was arbitrary, we
obtain 
\[
	u\leq\hat{u}=\tilde{u}+\sup_{\partial_{\mathcal{P}}\Omega_{T}}(u-\hat{u})\quad\text{in }\Omega_T
\]
so that
\[
	\sup_{\Omega_{T}}(u-\tilde{u})\leq\sup_{\partial_{\mathcal{P}}\Omega_{T}}(u-\tilde{u}).
\]
Since the reverse inequality holds by continuity, this proves the claim.
\end{proof}

\begin{lemma}[Localization in space]
\label{lem:spatial_localization}
Let $T \in (0,\infty)$, assume that $\Omega\subset\mathbb{R}^{n}$ is open and bounded, and that $f \colon [0,T]\times\mathbb{R}^{n}\rightarrow\mathbb{R}$ satisfies
\eqref{eq:integrand}. Consider $u_o\in W^{1,\infty}(\Omega)$
and $L\in(0,\infty]$ such that $\left\Vert Du_o\right\Vert _{L^{\infty}(\Omega,\mathbb{R}^{n})}\leq L$.
Suppose that $u$ is a variational solution to \eqref{eq:pde} in $K_{u_o}^{L}(\Omega_{T})$,
$L\in(0,\infty]$ (in the sense of Definition \ref{def:constrained_solution}
if $L<\infty$, in the sense of Definition \ref{def:unconstrainded_solution}
if $L=\infty$). Moreover, suppose that $\partial_{t}u\in L^{2}(\Omega_{T})$.
Then for any domain $\Omega^{\prime}\subset\Omega$ and any $\tau \in (0,T]$, the variational inequality
\begin{align}
	\iint_{\Omega_{\tau}^{\prime}}f(t,Du)\,\dx\dt &\leq \iint_{\Omega_{\tau}^{\prime}}\partial_{t}v(v-u)+f(t,Dv)\,\dx\dt\nonumber \label{eq:spatial_localization 1}\\
	 &\phantom{=}+\tfrac{1}{2}\left\Vert u(0)-v(0)\right\Vert^2 _{L^{2}(\Omega^{\prime})}-\tfrac{1}{2}\left\Vert u(\tau)-v(\tau)\right\Vert^2 _{L^{2}(\Omega^{\prime})}
\end{align}
holds whenever $v\in K^{L}_{u_o}(\Omega_{\tau}^{\prime})$ with $\partial_{t}v\in L^{2}(\Omega_{\tau})$
and $v=u$ on $\partial \Omega ^\prime \times (0,\tau)$.
\end{lemma}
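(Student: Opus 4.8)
The plan is to extend $v$ from $\Omega_{\tau}'$ to the whole cylinder $\Omega_{\tau}$ by gluing it to $u$ on $\Omega_{\tau}\setminus\Omega_{\tau}'$, to insert the resulting map into the variational inequality satisfied by $u$ on $\Omega_{\tau}$, and then to split every space--time integral and every $L^{2}(\Omega)$-norm into its contributions over $\Omega'$ and over $\Omega\setminus\Omega'$. On the latter set the extension coincides with $u$, so the $f(t,Du)$-terms on the two sides cancel and the terms involving $\partial_{t}v$ and the boundary slice at time $\tau$ drop out there; what survives is precisely \eqref{eq:spatial_localization 1}.

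Two preliminary facts would be recorded first. Since $u\in C^{0}([0,T];L^{2}(\Omega))$ and $\|u(\tau)-u_o\|_{L^{2}(\Omega)}\to0$ as $\tau\downarrow0$ by Lemma~\ref{lem:initial_datum}, we have the identification $u(0)=u_o$ in $L^{2}(\Omega)$. Moreover, by Lemma~\ref{lem:local time}, $u|_{\Omega_{\tau}}$ is a variational solution in $K^{L}_{u_o}(\Omega_{\tau})$ for every $\tau\in(0,T]$, so the inequality \eqref{eq:var_ineq_constrained} (if $L<\infty$), respectively \eqref{eq:var_ineq_unconstrained} (if $L=\infty$), holds with $T$ replaced by $\tau$.

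Next, given an admissible comparison map $v$ on $\Omega_{\tau}'$ with $v=u$ on $\partial\Omega'\times(0,\tau)$, define $\bar v:=v$ on $\Omega_{\tau}'$ and $\bar v:=u$ on $\Omega_{\tau}\setminus\Omega_{\tau}'$. I would verify that $\bar v$ is an admissible comparison map for $u$ on $\Omega_{\tau}$: it lies in $L^{\infty}(\Omega_{\tau})\cap C^{0}([0,\tau];L^{2}(\Omega))$ because $\|\bar v(t)-\bar v(s)\|_{L^{2}(\Omega)}^{2}=\|v(t)-v(s)\|_{L^{2}(\Omega')}^{2}+\|u(t)-u(s)\|_{L^{2}(\Omega\setminus\Omega')}^{2}$; since the traces of $v$ and $u$ agree on $\partial\Omega'\times(0,\tau)$, a standard gluing argument (integration by parts shows there is no concentrated part on the interface) gives $D\bar v=Dv\,\chi_{\Omega'}+Du\,\chi_{\Omega\setminus\Omega'}$, so that $\|D\bar v\|_{L^{\infty}(\Omega_{\tau},\R^{n})}\leq\max\{\|Dv\|_{L^{\infty}(\Omega_{\tau}',\R^{n})},\|Du\|_{L^{\infty}(\Omega_{T},\R^{n})}\}$, which respects the constraint $L$; similarly $\partial_{t}\bar v=\partial_{t}v\,\chi_{\Omega'}+\partial_{t}u\,\chi_{\Omega\setminus\Omega'}\in L^{2}(\Omega_{\tau})$, where the hypothesis $\partial_{t}u\in L^{2}(\Omega_{T})$ is used; and $\bar v=u=u_o$ on $\partial\Omega\times(0,\tau)$, so $\bar v\in K^{L}_{u_o}(\Omega_{\tau})$.

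Finally, inserting $\bar v$ into the variational inequality for $u$ on $\Omega_{\tau}$ and splitting all integrals over $\Omega'$ and $\Omega\setminus\Omega'$, the relations $\bar v=u$, $D\bar v=Du$ and $\partial_{t}\bar v(\bar v-u)=0$ on $\Omega_{\tau}\setminus\Omega_{\tau}'$, together with $(\bar v-u)(\tau)=0$ there, cancel the term $\iint_{(\Omega\setminus\Omega')_{\tau}}f(t,Du)\,\dx\dt$ that appears on both sides and remove the remaining $\Omega\setminus\Omega'$ contributions; using $u(0)=u_o$ in $L^{2}(\Omega)$ once to see $\|\bar v(0)-u_o\|_{L^{2}(\Omega\setminus\Omega')}^{2}=0$ and once more to rewrite $\|v(0)-u_o\|_{L^{2}(\Omega')}^{2}=\|u(0)-v(0)\|_{L^{2}(\Omega')}^{2}$, and noting $\|(\bar v-u)(\tau)\|_{L^{2}(\Omega)}^{2}=\|u(\tau)-v(\tau)\|_{L^{2}(\Omega')}^{2}$, the surviving inequality is exactly \eqref{eq:spatial_localization 1}. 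The bookkeeping in this last step is routine; the only point requiring genuine care is the admissibility of $\bar v$, specifically that gluing $v$ to $u$ along the lateral interface $\partial\Omega'\times(0,\tau)$ — where their traces coincide — preserves both the $L^{\infty}$-bound on the spatial gradient and membership in $C^{0}([0,\tau];L^{2}(\Omega))$ with square-integrable time derivative.
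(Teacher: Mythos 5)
Your proposal is correct and follows exactly the paper's argument: extend $v$ to all of $\Omega_\tau$ by gluing it to $u$ across $\partial\Omega'\times(0,\tau)$, note that the resulting map is an admissible comparison function for $u|_{\Omega_\tau}$ (which is itself a variational solution by Lemma~\ref{lem:local time}), and read off \eqref{eq:spatial_localization 1} after the obvious cancellations. The paper states this in a single line; you have merely supplied the routine verifications of admissibility and the identification $u(0)=u_o$ from Lemma~\ref{lem:initial_datum}, both of which are correct.
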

\begin{proof}
By Lemma \ref{lem:local time} the function $u|_{\Omega_\tau}$ is a variational solution to \eqref{eq:pde} in the function space $K_{u_o}^L(\Omega_\tau)$. Observe that 
\[
	w:=\begin{cases}
		v & \text{in }\Omega_{\tau}^{\prime},\\
		u & \text{in }(\Omega\setminus\Omega^{\prime})_{\tau},
\end{cases}
\]
is an admissible comparison function for  $u|_{\Omega_\tau}$  in the variational inequality.
Inserting $w$ into the variational inequality \eqref{eq:var_ineq_constrained} if $L<\infty$ (or \eqref{eq:var_ineq_unconstrained} if $L=\infty$) with $T$ replaced by $\tau$ immediately yields \eqref{eq:spatial_localization 1}.
\end{proof}

\section{Existence for the gradient constrained problem for regular integrands}\label{sec:existence_for_constrained}
In this section, we are concerned with integrands that admit a time derivative.
More precisely, we consider $f \colon [0,T] \times \R^n \to \R$ such that
\begin{equation}
	\left\{
	\begin{array}{l}
		\mbox{$\xi \mapsto f(t,\xi)$ is convex for any $t \in [0,T]$,} \\[5pt]
		\mbox{$t \mapsto f(t,\xi) \in W^{1,1}(0,T)$ for any $\xi \in \R^n$,} \\[5pt]
		\mbox{for any $L>0$  there exists $\tilde{g}_L \in L^1(0,T)$ such that $|\partial_t f(t,\xi)| \leq \tilde{g}_L(t)$} \\
		\mbox{for a.e.~$t \in [0,T]$ and all $\xi \in B_L(0)$.}
	\end{array}
	\right.
	\label{eq:regular_f}
\end{equation}
The aim of this section is to prove the following existence result.
\begin{theorem}
\label{thm:existence_constrained}
Let $\Omega \subset \R^n$ be a bounded Lipschitz domain and $T \in (0,\infty)$.
Consider a boundary datum $u_o \in W^{1,\infty}(\Omega)$ such that $\|Du_o\|_{L^\infty(\Omega,\R^n)} \leq L$ for a constant $L \in (0, \infty)$.
Further, assume that the integrand $f \colon [0,T] \times \R^n \to \R$ satisfies hypothesis \eqref{eq:regular_f}.
Then, there exists a variational solution $u \in K^L_{u_o}(\Omega_T)$ to the gradient constrained problem in the sense of Definition~\ref{def:constrained_solution}.
Further, there holds $\partial_t u \in L^2(\Omega_T)$ with the quantitative bound
$$
	\iint_{\Omega_T} |\partial_t u|^2 \,\dx\dt
	\leq
	4|\Omega| \big( \sup_{|\xi| \leq L} |f(0,\xi)| + \|\tilde{g}_L\|_{L^1(0,T)} \big).
$$
\end{theorem}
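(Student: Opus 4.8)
The natural approach is the method of minimizing movements (Rothe's method / implicit time discretization). Fix $N \in \N$, set $h := T/N$ and, inductively, let $u_0 := u_o$ and for $i = 1, \dots, N$ define $u_i \in K^L_{u_o}(\Omega)$ as a minimizer of the functional
\begin{equation*}
	v \mapsto \int_\Omega \tfrac{1}{2h} |v - u_{i-1}|^2 \,\dx + \int_{(i-1)h}^{ih} \!\!\int_\Omega f(t, Dv) \,\dx\dt
\end{equation*}
over the closed convex set $\{ v \in W^{1,\infty}(\Omega) : v = u_o \text{ on } \partial\Omega,\ \|Dv\|_{L^\infty(\Omega,\R^n)} \leq L \}$. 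Existence of each minimizer follows from the direct method: the constraint $\|Dv\|_{L^\infty} \leq L$ together with the boundary condition gives a bound in $W^{1,\infty}(\Omega)$, hence compactness in $C^0(\overline\Omega)$ and weak-$*$ compactness of gradients, while lower semicontinuity of the gradient term is Lemma~\ref{lem:lower_semicontinuity} applied on the slab $\Omega \times ((i-1)h, ih)$ (the $L^2$-term is continuous). I would then assemble the piecewise-constant interpolant $u^{(h)}$ and the piecewise-affine-in-time interpolant $\widehat u^{(h)}$ from the $u_i$.

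The key a priori estimate is the discrete energy bound, obtained by comparing $u_i$ with the competitor $u_{i-1}$ in the $i$-th minimization problem. This yields $\tfrac{1}{2h}\|u_i - u_{i-1}\|_{L^2(\Omega)}^2 \leq \int_{(i-1)h}^{ih}\!\int_\Omega \big( f(t, Du_{i-1}) - f(t, Du_i) \big) \,\dx\dt$. To control the right-hand side after summing over $i$, I would use hypothesis \eqref{eq:regular_f}: write $f(t, Du_{i-1}) = f((i-1)h, Du_{i-1}) + \int_{(i-1)h}^{t} \partial_s f(s, Du_{i-1})\,\ds$ and similarly for the other term, bounding the time-increments by $\int \tilde g_L$ on the relevant slabs and telescoping the $f((i-1)h, \cdot) - f(ih, \cdot)$-type contributions against $\sup_{|\xi|\leq L}|f(0,\xi)| + \|\tilde g_L\|_{L^1(0,T)}$. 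The outcome is $\sum_i \tfrac{1}{h}\|u_i - u_{i-1}\|_{L^2(\Omega)}^2 \leq 4|\Omega|\big(\sup_{|\xi|\le L}|f(0,\xi)| + \|\tilde g_L\|_{L^1(0,T)}\big)$, which is exactly the asserted bound: note $\int_{\Omega_T}|\partial_t \widehat u^{(h)}|^2 \,\dx\dt = \sum_i \tfrac1h \|u_i - u_{i-1}\|_{L^2(\Omega)}^2$. This uniform bound gives, along a subsequence, $\widehat u^{(h)} \rightharpoonup u$ with $\partial_t \widehat u^{(h)} \rightharpoonup \partial_t u$ weakly in $L^2(\Omega_T)$, uniform convergence $\widehat u^{(h)}, u^{(h)} \to u$ (via Arzelà–Ascoli in space plus equicontinuity in time from the $L^2$-estimate and Aubin–Lions type reasoning), $D u^{(h)} \wsto Du$ weakly-$*$ in $L^\infty$, and $\|Du\|_{L^\infty} \leq L$, so $u \in K^L_{u_o}(\Omega_T)$ with $\partial_t u \in L^2(\Omega_T)$ satisfying the quantitative bound by weak lower semicontinuity of the $L^2$-norm.

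It remains to pass to the limit in the Euler–Lagrange-type variational inequality satisfied by each $u_i$. For a fixed comparison map $v \in K^L_{u_o}(\Omega_T)$ with $\partial_t v \in L^2(\Omega_T)$, minimality of $u_i$ against the convex combination competitor $u_i + \lambda(v(ih) - u_i)$ (which stays in the constraint set by convexity) and letting $\lambda \downarrow 0$ gives a discrete variational inequality; summing over $i$ and using convexity of $\xi \mapsto f(t,\xi)$ to replace $f(t, Du_i)$-differences by $f(t, Dv^{(h)}) - f(t, Du^{(h)})$ produces a discrete analogue of \eqref{eq:var_ineq_constrained}. The main obstacle — and the place requiring the most care — is the limit passage in the parabolic term $\sum_i \int (u_i - u_{i-1})(v(ih) - u_i)\,\dx$: one rewrites it via summation by parts to isolate $\iint_{\Omega_T} \partial_t \widehat u^{(h)}(v^{(h)} - u^{(h)})$ plus boundary contributions at $t=0$ and $t=T$, then uses the weak $L^2$-convergence of $\partial_t \widehat u^{(h)}$, strong $L^2$-convergence of $u^{(h)}$, and the identity $\iint \partial_t u\,(u - v) = \tfrac12\|(u-v)(T)\|^2 - \tfrac12\|(u-v)(0)\|^2 - \iint \partial_t v\,(u-v)$ (valid since now $\partial_t u \in L^2$) to recover the correct boundary terms $\tfrac12\|v(0)-u_o\|^2 - \tfrac12\|(v-u)(T)\|^2$; the gradient term limit is handled by Lemma~\ref{lem:lower_semicontinuity} on the lower-semicontinuous side and by \eqref{eq:dominating_f} plus dominated convergence (using $\|Dv^{(h)}\|_{L^\infty} \leq L$ and $Dv^{(h)} \to Dv$ a.e.) on the other side. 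A technical subtlety is that $v^{(h)}$ must be built as a suitable piecewise interpolation of $v$ so that it converges appropriately in both space and time; standard estimates for $\partial_t v \in L^2$ make this routine. Finally, $u \in C^0([0,T];L^2(\Omega))$ follows either directly from $\partial_t u \in L^2(\Omega_T)$ or from Lemma~\ref{lem:time_continuity}, completing the verification that $u$ is a variational solution in the sense of Definition~\ref{def:constrained_solution}.
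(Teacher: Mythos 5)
Your proposal is correct and follows essentially the same route as the paper: the method of minimizing movements with gradient-constrained minimizers, the energy estimate obtained by testing the $i$-th problem against $u_{i-1}$ and absorbing the time increments of $f$ into $\|\tilde g_L\|_{L^1}$, weak/weak-$*$ compactness of the interpolants, lower semicontinuity for the gradient term, and summation by parts to recover the parabolic boundary terms. The only cosmetic deviations are your use of the slab average $\int_{(i-1)h}^{ih} f(t,Dv)\,\dt$ in place of the paper's endpoint evaluation $h\,f(ih,Dv)$ (which makes the telescoping in the energy estimate slightly less clean but is handled by the same $\tilde g_L$-bound), and your plan to pass through $\iint \partial_t u\,(u-v)$ and then integrate by parts in time, where the paper instead keeps the difference quotient on the test function $v$ throughout — both are standard and lead to the same variational inequality.
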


We prove Theorem~\ref{thm:existence_constrained} via the method of minimizing movements.
The proof is divided into five steps.

\subsection{A sequence of minimizers to elliptic variational functionals}
Fix a step size $h := \frac{T}{m}$ for some $m \in \N$ and consider times
$ih$, $i = 0,\ldots,m$.
For $i=0$, set $u_0 := u_o \in W^{1,\infty}(\Omega)$ with
$\|Du_o\|_{L^\infty(\Omega,\R^n)} \leq L$.
Further, for $i = 1,\ldots,m$, $u_i$ is defined as the minimizer of the elliptic variational functional
$$
	F_i[v] :=
	\int_\Omega f(ih, Dv) \,\dx + \tfrac{1}{2h} \int_\Omega |v - u_{i-1}|^2 \,\dx
$$
in the class
$\mathcal{A} := \{v \in W^{1,\infty}(\Omega) : \text{$v=u_o$ on $\partial\Omega$ and $\|Dv\|_{L^\infty(\Omega,\R^n)} \leq L$} \}$.
The existence of a minimizer to $F_i$ in this class is ensured by the direct method in the calculus of variations.
More precisely, note that $\mathcal{A} \neq \emptyset$, since $u_o \in \mathcal{A}$, and consider a minimizing sequence to $F_i$ in $\mathcal{A}$, i.e. a sequence $(u_{i,j})_{j \in \N} \subset \mathcal{A}$ such that
$$
	\lim_{j \to \infty} F_i[u_{i,j}]
	=
	\inf_{v \in \mathcal{A}} F_i[v].
$$
Further, by definition of $\mathcal{A}$ and Rellich's theorem there exists a limit map $u_i \in \mathcal{A}$ and a (not relabelled) subsequence such that
$$
	\left\{
	\begin{array}{l}
		\mbox{$u_{i,j} \to u_i$ strongly in $L^2(\Omega)$ as $j \to \infty$,} \\[5pt]
		\mbox{$Du_{i,j} \wto Du_i$ weakly in $L^2(\Omega,\R^n)$ as $j \to \infty$.}
	\end{array}
	\right.
$$
Since the functional $\widetilde{F}_i \colon W^{1,2}(\Omega) \to (-\infty,\infty]$,
$$
	\widetilde{F}_i[v] :=
	\left\{
	\begin{array}{ll}
		F_i[v] &\text{if } v \in \mathcal{A}, \\[5pt]
		\infty &\text{else}
	\end{array}
	\right.
$$
is proper, convex and lower semicontinuous with respect to strong convergence in $W^{1,2}(\Omega)$, it is also lower semicontinuous with respect to weak convergence in $W^{1,2}(\Omega)$, see~\cite[Corollary 2.2]{Ekeland-Temam}.
Therefore, we obtain that
$$
	F_i[u_i]
	=
	\widetilde{F}_i[u_i]
	\leq
	\liminf_{j \to \infty} \widetilde{F}_i[u_{i,j}]
	=
	\lim_{j \to \infty} F_i[u_{i,j}]
	=
	\inf_{v \in \mathcal{A}} F_i[v].
$$

\subsection{Energy estimates}
Since $u_{i-1} \in \mathcal{A}$ is an admissible comparison map for the minimizer $u_i$
and $f$ fulfills \eqref{eq:regular_f}$_3$, we have that
\begin{align*}
	\int_\Omega &f(ih,Du_i) \,\dx + \tfrac{1}{2h} \int_\Omega |u_i - u_{i-1}|^2 \,\dx 
	=
	F_i[u_i] \\
	&\leq
	F_i[u_{i-1}] \\
	&=
	\int_\Omega f((i-1)h,Du_{i-1}) \,\dx
	+ \int_\Omega f(ih,Du_{i-1}) - f((i-1)h,Du_{i-1}) \,\dx \\
	&\leq
	\int_\Omega f((i-1)h,Du_{i-1}) \,\dx
	+ \iint_{\Omega \times ((i-1)h,ih)} |\partial_t f(t,Du_{i-1})| \,\dx\dt \\
	&\leq
	\int_\Omega f((i-1)h,Du_{i-1}) \,\dx
	+ |\Omega| \int_{((i-1)h,ih)} |\tilde{g}_L(t)| \,\dt.
\end{align*}
Summing up the preceding inequalities from $i=1$ to $i=m$, we find that
\begin{align*}
	\sum_{i=1}^m \int_\Omega &f(ih,Du_i) \,\dx\dt
	+ \tfrac{1}{2h} \sum_{i=1}^m \int_\Omega |u_i - u_{i-1}|^2 \,\dx \\
	&\leq
	\sum_{i=1}^m \int_\Omega f((i-1)h,Du_{i-1}) \,\dx
	+ |\Omega| \int_{(0,T)} |\tilde{g}_L(t)| \,\dt.
\end{align*}
Subtracting the first term on the left-hand side, we conclude that
\begin{align}
	\tfrac{1}{2h} \sum_{i=1}^m \int_\Omega |u_i - u_{i-1}|^2 \,\dx
	&\leq
	\int_\Omega f(0,Du_o) \,\dx - \int_\Omega f(T,Du_m) \,\dx
	+ |\Omega| \|\tilde{g}_L\|_{L^1(0,T)} \nonumber \\
	&\leq
	2 |\Omega| \big( \sup_{|\xi| \leq L} |f(0,\xi)| + \|\tilde{g}_L\|_{L^1(0,T)} \big).
	\label{eq:energy_estimate}
\end{align}

\subsection{The limit map}
In the following we denote the step size by $h_m$ in order to emphasize the dependence on $m$.
First, we join the minimizers $u_i$ to a map that is piecewise constant with respect to time.
More precisely, we define $u^{(m)} \colon \Omega \times (-h_m,T] \to \R$ by
$$
	u^{(m)}(t) := u_i
	\quad \text{for } t \in ((i-1)h_m, ih_m], \, i=0,\ldots,m.
$$
Observe that the sequence $\big(u^{(m)} \big)_{m \in \N}$ is bounded in $L^\infty(\Omega_T)$,
since $\|u^{(m)}\|_{L^\infty(\Omega_T)} = \max_{i=0,\ldots,m} \|u_i\|_{L^\infty(\Omega)}$,
$u_i \in \mathcal{A}$ for all $i=0,\ldots,m$ and $\mathcal{A}$ is equibounded.
Further, we know that $\|Du^{(m)}\|_{L^\infty(\Omega_T,\R^n)} = \max_{i=0,\ldots,m} \|
Du_i\|_{L^\infty(\Omega,\R^n)} \leq L$ for any $m \in \N$.
Therefore, there exists a subsequence $\mathfrak{K} \subset \N$ and a limit map $u \in L^\infty(\Omega_T)$ such that $\|Du\|_{L^\infty(\Omega_T,\R^n)} \leq L$, $u= u_o$ on $\partial\Omega \times (0,T)$ and
\begin{equation}
	\left\{
	\begin{array}{l}
	\mbox{$u^{(m)} \wsto u$ weakly$^*$ in $L^\infty(\Omega_T)$ as $\mathfrak{K} \ni m \to \infty$,} \\[5pt]
	\mbox{$u^{(m)}(t) \to u(t)$ uniformly as $\mathfrak{K} \ni m \to \infty$ for each $t \in [0,T]$,} \\[5pt]
	\mbox{$Du^{(m)} \wsto Du$ weakly$^*$ in $L^\infty(\Omega_T,\R^n)$ as $\mathfrak{K} \ni m \to \infty$.}
	\end{array}
	\right.
	\label{eq:convergence_um}
\end{equation}
In order to prove that $u$ has a time derivative, we consider the linear interpolation of minimizers $\tilde{u}^{(m)} \colon \Omega \times (-h_m,T] \to \R$
given by $\tilde{u}^{(m)}(t) := u_o$ for $t \in (-h_m,0]$ and
$$
	\tilde{u}^{(m)}(t) :=
	\Big(i - \tfrac{t}{h_m} \Big) u_{i-1} + \Big( 1 - i + \tfrac{t}{h_m} \Big) u_i
	\quad \text{for } t \in ((i-1)h_m, ih_m], \, i=1,\ldots,m.
$$
Similar arguments as above ensure that $\big(\tilde{u}^{(m)} \big)_{m \in \N}$ is bounded in $L^\infty(\Omega_T)$ and that $\|D\tilde{u}^{(m)}\|_{L^\infty(\Omega_T,\R^n)} \leq L$ for any $m \in \N$.
Moreover, by the energy bound \eqref{eq:energy_estimate} we obtain that
\begin{align}
	\iint_{\Omega_T} |\partial_t \tilde{u}^{(m)}|^2 \,\dx\dt
	&=
	\sum_{i=1}^m \iint_{\Omega \times ((i-1)h_m, ih_m]}
	\tfrac{1}{h_m^2} |u_i - u_{i-1}|^2 \,\dx\dt \nonumber \\
	&=
	\tfrac{1}{h_m} \sum_{i=1}^m \int_\Omega |u_i - u_{i-1}|^2 \,\dx \nonumber \\
	&\leq
	4|\Omega| \big( \sup_{|\xi| \leq L} |f(0,\xi)| + \|\tilde{g}_L\|_{L^1(0,T)} \big).
	\label{eq:time_derivative_um}
\end{align}
Hence, $\big( \tilde{u}^{(m)} \big)_{m \in \N}$ is bounded in $W^{1,2}(\Omega_T)$.
By Rellich's theorem we conclude that there exists a subsequence still labelled $\mathfrak{K}$ and a limit map $\tilde{u} \in L^\infty(\Omega_T)$ with $\|D\tilde{u}\|_{L^\infty(\Omega_T,\R^n)} \leq L$, $\tilde{u}= u_o$ on $\partial\Omega \times (0,T)$
and $\partial_t \tilde{u} \in L^2(\Omega_T)$ such that
\begin{equation}
	\left\{
	\begin{array}{l}
	\mbox{$\tilde{u}^{(m)} \to u$ strongly in $L^2(\Omega_T)$ as $\mathfrak{K} \ni m \to \infty$,} \\[5pt]
	\mbox{$\partial_t \tilde{u}^{(m)} \wto \partial_t \tilde{u}$ weakly in $L^2(\Omega_T)$ as $\mathfrak{K} \ni m \to \infty$.}
	\end{array}
	\right.
	\label{eq:convergence_tilde_um}
\end{equation}
Note that $\partial_t \tilde{u} \in L^2(\Omega_T)$ in particular implies that $\tilde{u} \in C^{0;\frac{1}{2}}([0,T];L^2(\Omega))$ and therefore $\tilde{u}$ is contained in the class of functions $K_{u_o}^L(\Omega_T)$.
Next, since $\big| \big( u^{(m)} - \tilde{u}^{(m)} \big)(t) \big| \leq |u_i - u_{i-1}|$ for $t  \in ((i-1)h_m, ih_m]$, $i=1,\ldots,m$, we infer from \eqref{eq:energy_estimate} that
\begin{align*}
	\iint_{\Omega_T} \big| u^{(m)} - \tilde{u}^{(m)} \big|^2 \,\dx\dt
	&\leq
	h_m \sum_{i=1}^m \int_\Omega |u_i - u_{i-1}|^2 \,\dx \\
	&\leq
	4|\Omega| \big( \sup_{|\xi| \leq L} |f(0,\xi)| + \|\tilde{g}_L\|_{L^1(0,T)} \big) h_m^2.
\end{align*}
Together with \eqref{eq:convergence_tilde_um}$_1$ this implies that $u^{(m)} \to \tilde{u}$ strongly in $L^2(\Omega_T)$ as $\mathfrak{K} \ni m \to \infty$ and thus in particular that $u = \tilde{u} \in K_{u_o}^L(\Omega_T)$ with $\partial_t u \in L^2(\Omega_T)$.
Finally, by lower semicontinuity with respect to weak convergence, \eqref{eq:time_derivative_um} gives us the claimed bound
$$
	\iint_{\Omega_T} |\partial_t u|^2 \,\dx\dt
	\leq
	4|\Omega| \big( \sup_{|\xi| \leq L} |f(0,\xi)| + \|\tilde{g}_L\|_{L^1(0,T)} \big).
$$

\subsection{Minimizing property of the approximations}
First, define piecewise constant approximations of the integrand by
$$
	f^{(m)}(t,\xi) := f(ih,\xi)
	\quad \text{for } t  \in ((i-1)h_m, ih_m], \, i=0, \ldots, m.
$$
We claim that $u^{(m)}$ is a minimizer of the functional
$$
	F^{(m)}[v] :=
	\iint_{\Omega_T} f^{(m)}(t,Dv) \,\dx\dt
	+ \tfrac{1}{2h_m} \iint_{\Omega_T} |v(t) - u^{(m)}(t-h_m)|^2 \,\dx\dt
$$
in the class of functions
$$
	\mathcal{A}_T :=
	\{v \in L^\infty(\Omega_T) :
	\| Du \|_{L^\infty(\Omega_T,\R^n)} \leq L
	\text{ and $u=u_o$ on } \partial\Omega \times (0,T) \}.
$$
Indeed, consider an arbitrary map $v \in \mathcal{A}_T$.
Since $v(t) \in \mathcal{A}$ for a.e.~$t \in [0,T]$, by the minimizing property of $u_i$ with respect to $F_i$ in the class $\mathcal{A}$ we find that
\begin{align*}
	F^{(m)} \big[ u^{(m)} \big]
	&=
	\sum_{i=1}^m \int_{((i-1)h_m, ih_m ]} F_i[u_i] \,\dt 
	\leq
	\sum_{i=1}^m \int_{((i-1)h_m, ih_m ]} F_i[v(t)] \,\dt
	=
	F^{(m)}[v].
\end{align*}
A straightforward computation shows that this is equivalent to
\begin{align*}
	\iint_{\Omega_T} &f^{(m)} \big( t, Du^{(m)} \big) \,\dx\dt \\
	&\leq
	\iint_{\Omega_T} f^{(m)}(t,Dv) \,\dx\dt \\
	&\phantom{=}
	+ \tfrac{1}{h_m} \iint_{\Omega_T} \tfrac12 \big| v - u^{(m)} \big|^2
	+ \big( v - u^{(m)} \big) \big( u^{(m)} - u^{(m)}(t-h_m) \big) \,\dx\dt
\end{align*}
for any $v \in \mathcal{A}_T$.
Choosing the convex combination $u^{(m)} + s \big( v - u^{(m)} \big) \in \mathcal{A}_T$
with $s \in (0,1)$ as comparison map and using the convexity of $\xi \mapsto f(t,\xi)$
for all $t \in [0,T]$, we obtain that
\begin{align*}
	\iint_{\Omega_T} &f^{(m)} \big( t, Du^{(m)} \big) \,\dx\dt \\
	&\leq
	(1-s) \iint_{\Omega_T} f^{(m)} \big( t, Du^{(m)} \big) \,\dx\dt
	+s \iint_{\Omega_T} f^{(m)}(t, Dv) \,\dx\dt \\
	&\phantom{=}
	+ \tfrac{1}{h_m} \iint_{\Omega_T} \tfrac{s^2}{2} \big| v - u^{(m)} \big|^2
	+ s\big( v - u^{(m)} \big) \big( u^{(m)} - u^{(m)}(t-h_m) \big) \,\dx\dt.
\end{align*}
Reabsorbing the first term on the right-hand side into the left-hand side, dividing the resulting inequality by $s$ and taking the limit $s \downarrow 0$ gives us that
\begin{align*}
	\iint_{\Omega_T} &f^{(m)} \big( t, Du^{(m)} \big) \,\dx\dt \\
	&\leq
	\iint_{\Omega_T} f^{(m)}(t, Dv) \,\dx\dt
	+ \tfrac{1}{h_m} \iint_{\Omega_T} \big( v - u^{(m)} \big) \big( u^{(m)} - u^{(m)}(t-h_m) \big) \,\dx\dt.
\end{align*}
Next, assume without loss of generality that $v(0) \in L^\infty(\Omega)$,
extend $v$ to $(-h_m, 0]$ by $v(0)$ and note that
\begin{align*}
	\big( v &- u^{(m)} \big) \big( u^{(m)} - u^{(m)}(t-h_m) \big) \\
	&=
	\big( v - u^{(m)} \big) \big( v - v(t-h_m) \big)
	+ \tfrac12 \big( v(t-h_m) - u^{(m)}(t-h_m) \big)^2
	-\tfrac12 \big( v - u^{(m)} \big)^2 \\
	&\phantom{=}
	- \tfrac12 \big( v - v(t-h_m) - u^{(m)} + u^{(m)}(t-h_m) \big)^2 \\
	&\leq
	\big( v - u^{(m)} \big) \big( v - v(t-h_m) \big)
	+ \tfrac12 \big( v(t-h_m) - u^{(m)}(t-h_m) \big)^2
	-\tfrac12 \big( v - u^{(m)} \big)^2.
\end{align*}
Inserting this into the preceding inequality and recalling that $v(t) = v(0)$ for $t \in (-h_m, 0]$, we infer
\begin{align}
	\iint_{\Omega_T} &f^{(m)} \big( t, Du^{(m)} \big) \,\dx\dt \nonumber\\
	&\leq
	\iint_{\Omega_T} f^{(m)}(t, Dv) \,\dx\dt
	+ \tfrac{1}{h_m} \iint_{\Omega_T} \big( v - u^{(m)} \big) \big( v - v(t-h_m) \big) \,\dx\dt
	\label{eq:preliminary_var_ineq} \\
	&\phantom{=}
	+\tfrac{1}{2h_m} \iint_{\Omega_T} \big( v(t-h_m) - u^{(m)}(t-h_m) \big)^2
	-\big( v - u^{(m)} \big)^2 \,\dx\dt \nonumber \\
	&=
	\iint_{\Omega_T} f^{(m)}(t, Dv) \,\dx\dt
	+ \tfrac{1}{h_m} \iint_{\Omega_T} \big( v - u^{(m)} \big) \big( v - v(t-h_m) \big) \,\dx\dt \nonumber \\
	&\phantom{=}
	+\tfrac{1}{2} \int_{\Omega} (v-u_o)^2 \,\dx
	-\tfrac{1}{2h_m} \iint_{\Omega \times (T-h_m, T]} \big| v - u^{(m)}(T) \big|^2 \,\dx\dt. \nonumber
\end{align}

\subsection{Variational inequality for the limit map}
We fix an arbitrary map $v \in K^L_{u_o}(\Omega_T)$ with $\partial_t v \in L^2(\Omega_T)$.
Thus, in particular we have that $v \in \mathcal{A}_T$, so $v$ is an admissible comparison map in \eqref{eq:preliminary_var_ineq}.
Our goal is to pass to the limit $\mathfrak{K} \ni m \to \infty$ in \eqref{eq:preliminary_var_ineq} in order to deduce the variational inequality \eqref{eq:var_ineq_constrained} for $u$.
To this end, we consider the terms separately.
First, we write the first term on the left-hand side of \eqref{eq:preliminary_var_ineq} as
\begin{align*}
	\iint_{\Omega_T} &f^{(m)} \big( t, Du^{(m)} \big) \,\dx\dt \\
	&=
	\iint_{\Omega_T} f\big( t, Du^{(m)} \big) \,\dx\dt
	+ \iint_{\Omega_T} f^{(m)} \big( t, Du^{(m)} \big) - f\big( t, Du^{(m)} \big) \,\dx\dt.
\end{align*}
By Lemma \ref{lem:lower_semicontinuity} and \eqref{eq:convergence_um}$_3$, we obtain that
$$
	\iint_{\Omega_T} f(t,Du) \,\dx\dt
	\leq
	\liminf_{\mathfrak{K} \ni m \to \infty}
	\iint_{\Omega_T} f\big( t, Du^{(m)} \big) \,\dx\dt.
$$
Further, since $\big\| Du^{(m)} \big\|_{L^\infty(\Omega_T,\R^n)} \leq L$ for all $m \in \N$ and $f$ fulfills \eqref{eq:regular_f}$_3$, we estimate
\begin{align*}
	\bigg| \iint_{\Omega_T} &f^{(m)} \big( t, Du^{(m)} \big) - f\big( t, Du^{(m)} \big) \,\dt \bigg| \\
	&\leq
	\sum_{i=1}^m \iint_{\Omega \times ((i-1)h_m, ih_m]} \big| f\big( ih_m, Du^{(m)} \big) - f\big( t, Du^{(m)} \big) \big| \dx\dt \\
	&\leq
	\sum_{i=1}^m \iint_{\Omega \times ((i-1)h_m, ih_m]} \int_{((i-1)h_m, ih_m]} \big|\partial_t f\big( s, Du^{(m)}(t) \big) \big| \, \ds \, \dx\dt \\
	&\leq
	|\Omega| h_m \sum_{i=1}^m \int_{((i-1)h_m, ih_m]} \tilde{g}_L(s) \,\ds \\
	&=
	|\Omega| \|\tilde{g}_L \|_{L^1(0,T)} h_m.
\end{align*}
Therefore, this term vanishes in the limit $m \to \infty$.
Joining the preceding estimates, we conclude that
\begin{equation}
	\iint_{\Omega_T} f(t,Du) \,\dx\dt
	\leq
	\liminf_{\mathfrak{K} \ni m \to \infty}
	\iint_{\Omega_T} f^{(m)}\big( t, Du^{(m)} \big) \,\dx\dt.
	\label{eq:mm_aux1}
\end{equation}
Repeating the estimates in the penultimate inequality with $u^{(m)}$ replaced by $v$, for the first term on the right-hand side of \eqref{eq:preliminary_var_ineq} we find that
\begin{equation}
	\iint_{\Omega_T} f(t,Dv) \,\dx\dt
	=
	\lim_{m \to \infty} \iint_{\Omega_T} f^{(m)}(t,Dv) \,\dx\dt.
	\label{eq:mm_aux2}
\end{equation}
Next, since $\frac{1}{h_m} (v(t) - v(t-h_m)) \to \partial_t v$ strongly in $L^2(\Omega_T)$ and $u^{(m)} \wto u$ weakly in $L^2(\Omega_T)$ as $\mathfrak{K} \ni m \to \infty$ by \eqref{eq:convergence_um}$_1$, we have that
\begin{equation}
	\iint_{\Omega_T} \partial_t v (v-u) \,\dx\dt
	=
	\lim_{\mathfrak{K} \ni m \to \infty}
	\tfrac{1}{h_m} \iint_{\Omega_T} \big( v - u^{(m)} \big) \big( v - v(t-h_m) \big) \,\dx\dt.
	\label{eq:mm_aux3}
\end{equation}
Finally, by the fact that $v \in C^0([0,T];L^2(\Omega))$ and by \eqref{eq:convergence_um}$_2$, we obtain that
\begin{equation}
	\tfrac12 \|(v-u)(T)\|_{L^2(\Omega)}^2
	=
	\lim_{\mathfrak{K} \ni m \to \infty}
	\tfrac{1}{2h_m} \iint_{\Omega \times (T-h_m, T]} \big| v - u^{(m)}(T) \big|^2 \,\dx\dt.
	\label{eq:mm_aux4}
\end{equation}
Collecting the assertions \eqref{eq:mm_aux1} -- \eqref{eq:mm_aux4} yields
\begin{align*}
	\iint_{\Omega_T} f(t,Du) \,\dx\dt
	&\leq
	\iint_{\Omega_T} f(t,Dv) \,\dx\dt
	+\iint_{\Omega_T} \partial_t v (v-u) \,\dx\dt \\
	&\phantom{=}
	+\tfrac12 \| v(0) - u_o \|_{L^2(\Omega)}^2
	-\tfrac12 \|(v-u)(T)\|_{L^2(\Omega)}^2.
\end{align*}
Since $v \in K^L_{u_o}(\Omega_T)$ with $\partial_t v \in L^2(\Omega_T)$ was arbitrary, we have shown that $u \in K^L_{u_o}(\Omega_T)$ is the desired variational solution.
\hfill $\qed$

\section{Existence for the unconstrained problem for regular integrands}
\label{sec:unconstrained_bsc}
In this section we show the existence of variational solutions to the unconstrained problem under the regularity condition \eqref{eq:regular_f} provided that the initial and boundary datum satisfies the bounded slope condition.
To this end, we need the following lemma, whose proof is similar to that of \cite[Lemma 7.1]{BDMS}. It states that affine functions independent of time are variational solutions to \eqref{eq:pde} with respect to their own initial and lateral boundary values.

\begin{lemma}
\label{lem:affine_is_sol}
Let $\Omega$ be open and bounded. Assume that $f \colon [0,T]\times\mathbb{R}^n \rightarrow\mathbb{R}$ satisfies \eqref{eq:integrand}. Let $w(x,t):=a+\xi\cdot x$ with constants $a\in\mathbb{R}$ and $\xi\in\mathbb{R}^{n}$ be an affine function independent of time. Then $w$ is a variational solution in the sense of Definition~\ref{def:unconstrainded_solution} in $K_{w}^{\infty}(\Omega_{T})$.
\end{lemma}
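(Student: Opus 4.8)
\emph{Setup.} First note that $w$ does lie in $K_w^\infty(\Omega_T)$: it is bounded on the bounded set $\Omega$, constant in $t$ (hence in $C^0([0,T];L^2(\Omega))$), satisfies $Dw=\xi\in L^\infty(\Omega_T,\R^n)$, and trivially attains its own lateral boundary values, where we take $u_o:=a+\xi\cdot x\in W^{1,\infty}(\Omega)$. It remains to verify the variational inequality \eqref{eq:var_ineq_unconstrained}, with $u$ replaced by $w$, for an arbitrary comparison map $v\in K_w^\infty(\Omega_T)$ with $\partial_t v\in L^2(\Omega_T)$. Since $Dw=\xi$, its left-hand side equals $\iint_{\Omega_T}f(t,\xi)\,\dx\dt$, so it suffices to prove the energy inequality $\iint_{\Omega_T}f(t,\xi)\,\dx\dt\le\iint_{\Omega_T}f(t,Dv)\,\dx\dt$ and to show that the three remaining terms on the right-hand side sum to zero.

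\emph{The gradient term, via Jensen's inequality.} The lateral boundary condition on $v$ amounts to $v(\cdot,t)-u_o\in W^{1,\infty}_0(\Omega)$ for a.e.\ $t\in(0,T)$. Approximating such a function by elements of $C_c^\infty(\Omega)$ and integrating by parts shows $\int_\Omega D(v-w)(\cdot,t)\,\dx=0$ for a.e.\ $t$ (no regularity of $\partial\Omega$ is needed here), and hence $\tfrac1{|\Omega|}\int_\Omega Dv(\cdot,t)\,\dx=\xi$. Because $\xi'\mapsto f(t,\xi')$ is convex for a.e.\ $t$ and $x\mapsto f(t,Dv(x,t))$ is integrable over $\Omega$ by \eqref{eq:dominating_f} with $L:=\|Dv\|_{L^\infty(\Omega_T,\R^n)}$, Jensen's inequality gives
\[
	f(t,\xi)
	=
	f\bigl(t,\tfrac1{|\Omega|}\textstyle\int_\Omega Dv(\cdot,t)\,\dx\bigr)
	\le
	\tfrac1{|\Omega|}\int_\Omega f(t,Dv(\cdot,t))\,\dx
	\qquad\text{for a.e.\ }t\in(0,T).
\]
Multiplying by $|\Omega|$ and integrating over $(0,T)$ — the double integral being finite by \eqref{eq:dominating_f} and, since $f$ is a Carath\'eodory function, measurable so that Fubini applies — yields the desired energy inequality. (Alternatively, one could follow \cite[Lemma 7.1]{BDMS} and pick a subgradient $\eta_t\in\R^n$ of $f(t,\cdot)$ at $\xi$, so that $f(t,\xi)\le f(t,Dv)-\eta_t\cdot(Dv-\xi)$ pointwise and $\int_\Omega\eta_t\cdot(Dv(\cdot,t)-\xi)\,\dx=0$; but this forces one to produce a measurable selection $t\mapsto\eta_t$, which the Jensen argument sidesteps.)

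\emph{The time and boundary terms, and conclusion.} Since $v\in C^0([0,T];L^2(\Omega))\cap W^{1,2}(0,T;L^2(\Omega))$ and $\partial_t w=0$, the map $t\mapsto\|(v-w)(t)\|_{L^2(\Omega)}^2$ is absolutely continuous with derivative $2\int_\Omega\partial_t v\,(v-w)\,\dx$ for a.e.\ $t$, whence
\[
	\iint_{\Omega_T}\partial_t v\,(v-w)\,\dx\dt
	=
	\tfrac12\|(v-w)(T)\|_{L^2(\Omega)}^2-\tfrac12\|(v-w)(0)\|_{L^2(\Omega)}^2.
\]
As $(v-w)(0)=v(0)-u_o$, the terms $\tfrac12\|v(0)-u_o\|_{L^2(\Omega)}^2$ and $-\tfrac12\|(v-w)(T)\|_{L^2(\Omega)}^2$ appearing in \eqref{eq:var_ineq_unconstrained} cancel these two contributions exactly, so the entire right-hand side of \eqref{eq:var_ineq_unconstrained} reduces to $\iint_{\Omega_T}f(t,Dv)\,\dx\dt$. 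Combined with the energy inequality from the previous paragraph this establishes \eqref{eq:var_ineq_unconstrained} for $w$, and since $v$ was arbitrary, $w$ is a variational solution in $K_w^\infty(\Omega_T)$.

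\emph{Main obstacle.} There is no serious obstacle. The only points requiring a little care are that $\int_\Omega D(v-w)(\cdot,t)\,\dx=0$ follows from the lateral boundary condition alone (so the argument is valid for any open bounded $\Omega$), and the standard integration-by-parts identity in time for maps in $C^0([0,T];L^2(\Omega))\cap W^{1,2}(0,T;L^2(\Omega))$. The only genuinely new feature relative to the time-independent case of \cite[Lemma 7.1]{BDMS} is the $t$-dependence of $f$, and this is handled transparently by the Jensen/Fubini route together with the dominating function $g_L$ from \eqref{eq:dominating_f}.
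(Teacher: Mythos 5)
Your proof is correct, and the paper itself only refers to \cite[Lemma 7.1]{BDMS} for this statement, so the comparison is really with that reference. Your structure matches it: the mean-zero property $\int_\Omega D(v-w)(\cdot,t)\,\dx = 0$ coming from the lateral boundary condition, a convexity inequality to control the $f$-term, and the integration-by-parts identity in $W^{1,2}(0,T;L^2(\Omega))$ to cancel the remaining terms. Replacing the subgradient inequality used in \cite{BDMS} by Jensen's inequality is a clean cosmetic variant. One small overstatement: the subgradient route does \emph{not} actually require a measurable selection $t\mapsto\eta_t$, because for each fixed $t$ the term $\eta_t\cdot\int_\Omega(Dv-\xi)\,\dx$ already vanishes before one integrates in $t$, so what remains is just the comparison of $t\mapsto|\Omega|\,f(t,\xi)$ and $t\mapsto\int_\Omega f(t,Dv)\,\dx$, both in $L^1(0,T)$ by \eqref{eq:integrand}$_2$ and \eqref{eq:dominating_f}. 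So both routes are equally elementary here; Jensen's inequality is simply a tidy way of packaging the same cancellation.
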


With the preceding lemma at hand, we are able to prove the following.
\begin{theorem}
\label{thm:existence_unconstrained_regular}
Let $T \in (0,\infty)$, assume that $\Omega\subset\mathbb{R}^{n}$ is open, bounded and
convex, and that the integrand $f \colon [0,T]\times\mathbb{R}^{n}\rightarrow\mathbb{R}$
satisfies \eqref{eq:regular_f}. Consider $u_o\in W^{1,\infty}(\Omega)$
such that $\left\Vert Du_o\right\Vert _{L^{\infty}(\Omega,\mathbb{R}^{n})}\leq Q$
and suppose that $\left. u_o \right|_{\partial\Omega}$ satisfies the bounded slope
condition with the same parameter $Q$. Then there exists a variational
solution $u\in K^{\infty}_{u_o}(\Omega_{T})$ to \eqref{eq:pde} in the sense
of Definition \ref{def:unconstrainded_solution}. Further, we have
the quantitative bound
\begin{equation}
\left\Vert Du\right\Vert _{L^{\infty}(\Omega_{T},\mathbb{R}^{n})}\leq Q. \label{eq:existence_unconstrained 1}
\end{equation}
\end{theorem}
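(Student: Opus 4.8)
The plan is to produce $u$ as the limit map furnished by Theorem~\ref{thm:existence_constrained} at an auxiliary, strictly larger constraint level, and then to sharpen its gradient bound and shed the constraint. Fix once and for all $L:=2Q$ (any $L>Q$ would do). Since $\|Du_o\|_{L^\infty(\Omega,\R^n)}\le Q\le L$ and $f$ satisfies \eqref{eq:regular_f}, Theorem~\ref{thm:existence_constrained} yields a variational solution $u\in K^L_{u_o}(\Omega_T)$ to the gradient constrained problem which additionally satisfies $\partial_t u\in L^2(\Omega_T)$. It then remains to show (a) the sharp gradient estimate \eqref{eq:existence_unconstrained 1}, namely $\|Du\|_{L^\infty(\Omega_T,\R^n)}\le Q$, and (b) that, being strictly inside the constraint, $u$ already solves the unconstrained problem of Definition~\ref{def:unconstrainded_solution}.

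For (a) I would argue in two stages. \emph{Boundary estimate.} Given $x_o\in\partial\Omega$, Lemma~\ref{lem:bounded_slope} (applied with $Q_1=Q_2=Q$) produces affine functions $w_{x_o}^\pm$ with $[w_{x_o}^\pm]_{0,1}\le Q$, $w_{x_o}^-\le u_o\le w_{x_o}^+$ on $\overline\Omega$ and $w_{x_o}^\pm(x_o)=u_o(x_o)$. By Lemma~\ref{lem:affine_is_sol} each $w_{x_o}^\pm$ is a variational solution, and since $Q\le L$ they qualify as comparison objects in the comparison principle Theorem~\ref{thm:comparison principle}. As $w_{x_o}^-\le u_o=u\le w_{x_o}^+$ on $\partial_{\mathcal P}\Omega_T$, that principle gives $w_{x_o}^-\le u\le w_{x_o}^+$ in $\Omega_T$, and evaluating at the touching point $x_o$ yields
\[
	|u(x,t)-u_o(x_o)|\le Q\,|x-x_o|\qquad\text{for all }x\in\overline\Omega,\ t\in[0,T],\ x_o\in\partial\Omega;
\]
here I use that $\|Du\|_{L^\infty(\Omega_T,\R^n)}<\infty$ together with $u\in C^0([0,T];L^2(\Omega))$ forces $u$ to possess a representative continuous on $\overline\Omega\times[0,T]$. \emph{Interior estimate.} Since \eqref{eq:pde} has no explicit $x$-dependence, for small $z\in\R^n$ the function $\tilde u(x,t):=u(x-z,t)$ is a variational solution on $(\Omega+z)_T$ with datum $u_o(\cdot-z)$ and $\partial_t\tilde u\in L^2$. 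Setting $D:=\Omega\cap(\Omega+z)$, which is again open, bounded and convex, Lemma~\ref{lem:spatial_localization} shows that both $u|_{D_T}$ and $\tilde u|_{D_T}$ satisfy the localized variational inequality in the form required by the maximum principle Lemma~\ref{lem:maximum_principle} on $D_T$, so that $\sup_{D_T}(u-\tilde u)=\sup_{\partial_{\mathcal P}D_T}(u-\tilde u)$. On $\overline D\times\{0\}$ this is at most $\sup_{\overline D}|u_o(\cdot)-u_o(\cdot-z)|\le Q|z|$, while every point of $\partial D\times(0,T)$ is of the form $(x,t)$ with $x\in\partial\Omega$ or $x-z\in\partial\Omega$, so the boundary estimate bounds $|u(x,t)-\tilde u(x,t)|$ by $Q|z|$ there too. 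Hence $u(x,t)-u(x-z,t)\le Q|z|$ on $D_T$; swapping $z$ and $-z$ and letting $|z|\downarrow0$ gives \eqref{eq:existence_unconstrained 1}.

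For (b), take an arbitrary $v\in K^\infty_{u_o}(\Omega_T)$ with $\partial_t v\in L^2(\Omega_T)$. Because $\|Du\|_{L^\infty(\Omega_T,\R^n)}\le Q<L$, for small $s\in(0,1)$ the convex combination $v_s:=(1-s)u+sv$ has $\|Dv_s\|_{L^\infty(\Omega_T,\R^n)}\le(1-s)Q+s\|Dv\|_{L^\infty(\Omega_T,\R^n)}\le L$, hence $v_s\in K^L_{u_o}(\Omega_T)$ with $\partial_t v_s\in L^2(\Omega_T)$ is admissible in \eqref{eq:var_ineq_constrained}. Inserting $v_s$, using $f(t,Dv_s)\le(1-s)f(t,Du)+sf(t,Dv)$ by convexity, $v_s-u=s(v-u)$, $v_s(0)-u_o=s(v(0)-u_o)$ and $u(0)=u_o$ (the initial condition, cf.~Lemma~\ref{lem:initial_datum} together with $u\in C^0([0,T];L^2(\Omega))$), then reabsorbing the term $(1-s)\iint_{\Omega_T}f(t,Du)\,\dx\dt$, dividing by $s$ and letting $s\downarrow0$, one obtains
\[
	\iint_{\Omega_T}f(t,Du)\,\dx\dt\le\iint_{\Omega_T}\partial_t u\,(v-u)+f(t,Dv)\,\dx\dt,
\]
and the integration-by-parts identity $\iint_{\Omega_T}\partial_t u\,(v-u)\,\dx\dt=\iint_{\Omega_T}\partial_t v\,(v-u)\,\dx\dt+\tfrac12\|v(0)-u_o\|_{L^2(\Omega)}^2-\tfrac12\|(v-u)(T)\|_{L^2(\Omega)}^2$ (legitimate since $v-u\in C^0([0,T];L^2(\Omega))$ with $\partial_t(v-u)\in L^2(\Omega_T)$) turns this into \eqref{eq:var_ineq_unconstrained}. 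The hard part is stage (a): the main work is to assemble the boundary-barrier comparison and the translation-plus-maximum-principle argument so that every hypothesis of Lemmas~\ref{lem:spatial_localization} and~\ref{lem:maximum_principle} — the $L^2$ time derivative, the matching lateral boundary data on $\partial D$, and the up-to-the-boundary continuity of $u$ — is actually in force; stage (b) is then essentially bookkeeping given $\partial_t u\in L^2(\Omega_T)$ and the sharp gradient bound.
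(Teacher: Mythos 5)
Your proposal is correct and follows essentially the same route as the paper's own proof: instantiate Theorem~\ref{thm:existence_constrained} at a strictly larger constraint level $L>Q$, derive the boundary estimate from Lemmas~\ref{lem:bounded_slope}, \ref{lem:affine_is_sol} and the comparison principle (Theorem~\ref{thm:comparison principle}), propagate it inward via the translation argument with Lemmas~\ref{lem:spatial_localization} and~\ref{lem:maximum_principle}, and finally drop the constraint through a convex interpolation $v_s=(1-s)u+sv$. The only differences are cosmetic (fixing $L=2Q$ rather than generic $L>Q$, the direction of the shift $z$ versus $y$) or expository (your explicit remarks on up-to-the-boundary continuity of $u$ and on the integration-by-parts step converting $\iint\partial_t u\,(v-u)$ into the form appearing in \eqref{eq:var_ineq_unconstrained}, both of which the paper leaves implicit).
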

\begin{proof}
Let $L > Q$. By Theorem \ref{thm:existence_constrained} there exists a variational solution $u \in K^L_{u_o} (\Omega_T)$ with $\partial_t u \in L^2(\Omega_T)$ to the gradient constrained problem in the sense of Definition~\ref{def:constrained_solution}.
We begin by proving the Lipschitz bound \eqref{eq:existence_unconstrained 1} and then show that $u$ is in fact already a solution to the unconstrained problem.

Fix $x_o \in\partial\Omega$ and denote by $w_{x_o}^{\pm}$ the
affine functions from Lemma \ref{lem:bounded_slope}. In particular we have $w_{x_o}^{-}\leq u_o\leq w_{x_o}^{+}$.
Since by Lemma \ref{lem:affine_is_sol} the functions $w_{x_o}^{-}$
and $w_{x_o}^{+}$ are variational solutions, it follows from the
comparison principle in Theorem \ref{thm:comparison principle} that 
\[
	w_{x_o}^{+}(x)\leq u(x,t)\leq w_{x_o}^{-}(x)\quad\text{for all }(x,t)\in\Omega_{T}.
\]
Consequently, there holds
\[
	\left|u(x,t)-u_o(x_o)\right|\leq Q\left|x-x_o\right|\quad\text{for all }(x,t)\in\Omega_{T}.
\]
Since $x_o\in\partial\Omega$ was arbitrary, we obtain that
\begin{equation}
\frac{\left|u(x,t)-u_o(x_o)\right|}{\left|x-x_o\right|}\leq Q\quad\text{for all }x_o \in\partial\Omega,(x,t)\in\Omega_{T}.\label{eq:constrained estimate 1}
\end{equation}
Consider $x_{1},x_{2}\in\Omega$, $x_{1}\not=x_{2}$, $t\in(0,T)$
and set $y:=x_{2}-x_{1}$. Define the shifted set $\widetilde{\Omega}_{T}:=\left\{ (x-y,t)\in\mathbb{R}^{n+1}:(x,t)\in\Omega_{T}\right\} $
and the shifted function $u_{y}\colon\widetilde{\Omega}_{T}\rightarrow\mathbb{R}$
by
\[
	u_{y}(x,t):=u(x+y,t).
\]
Then $u_{y}$ is a variational solution in $K^{L}(\widetilde{\Omega}_{T})$.
Since $\partial_{t}u,\partial_{t}u_{y}\in L^{2}((\Omega\cap\widetilde{\Omega})_{T})$ by the spatial localization principle in Lemma \ref{lem:spatial_localization}, the functions $u$ and $u_{y}$ both satisfy variational inequality \eqref{eq:max_principle 1} from Lemma \ref{lem:maximum_principle} in $(\Omega\cap\widetilde{\Omega})_T$. Therefore by Lemma \ref{lem:maximum_principle} there exists $(x_o,t_o)\in\mathcal{\partial_{\mathcal{P}}}((\Omega\cap\widetilde{\Omega}))_{T}$ such that 
\begin{align*}
	\left|u(x_{1},t)-u_{y}(x_{1},t)\right| & \leq\left|u(x_o,t_o)-u_{y}(x_o,t_o)\right|.
\end{align*}
By definition of $y$ and $u_{y}$, this yields
\begin{align*}
\left|u(x_{1},t)-u(x_{2},t)\right|\leq & \left|u(x_o,t_o)-u(x_o+y,t_o)\right|.
\end{align*}
Since either $t_o=0$ or one of the points $x_o$ or $x_o+y$ belongs to $\partial\Omega$, it follows from the assumption $\left\Vert Du_o\right\Vert _{L^{\infty}(\Omega,\mathbb{R}^{n})}\leq Q$
and (\ref{eq:constrained estimate 1}) that
\[
\left|u(x_o,t_o)-u(x_o+y,t_o)\right|\leq Q\left|y\right|=Q\left|x_1-x_2\right|.
\]
Combining this with the preceding estimate, we obtain \eqref{eq:existence_unconstrained 1}.

It remains to show that $u$ is a variational solution to the unconstrained problem. Let $w\in K^\infty_{u_o}(\Omega_{T})$ with $\partial_{t}w\in L^{2}(\Omega_{T})$
and choose the comparison map $v:=u+s(w-u)$ for $0<s\ll1$; in particular, since $Q < L$, for $s$ small enough we have that
\[
\left\Vert Dv\right\Vert _{L^{\infty}(\Omega_{T},\mathbb{R}^{n})}\leq\left\Vert Du\right\Vert _{L^{\infty}(\Omega_{T},\mathbb{R}^{n})}+s(\left\Vert Dw\right\Vert _{L^{\infty}(\Omega_{T},\mathbb{R}^{n})}+\left\Vert Du\right\Vert _{L^{\infty}(\Omega_{T},\mathbb{R}^{n})})\leq L.
\]
Thus $v$ is an admissible comparison function for the gradient constrained problem and we obtain that
\begin{align*}
\iint_{\Omega_{T}}f(t,Du)\,\dx\dt & \leq\iint_{\Omega_{T}}s\partial_{t}u(w-u)+sf(t,Dw)+(1-s)f(t,Du)\,\dx\dt \\
 & +\tfrac{s}{2}\left\Vert w(0)-u_o\right\Vert _{L^{2}(\Omega_{T})}^2
 -\tfrac{s}{2}\left\Vert w(T)-u(T)\right\Vert _{L^{2}(\Omega_{T})}^2
\end{align*}
Reabsorbing the integral with $f(t,Du)$ to the left-hand side and
dividing by $s$, we see that $u$ satisfies the variational inequality
\eqref{eq:var_ineq_unconstrained}. Thus $u$ is a variational solution in the sense of Definition \ref{def:unconstrainded_solution}.
\end{proof}

\section{Existence for the unconstrained problem for general integrands}\label{sec:unconstrained_for_general}

In this section we finish the proof of Theorem \ref{thm:main_existence}. Note that we only need to consider the case $T < \infty$.
Indeed, assume that for any $\tau \in (0,\infty)$ we have constructed a variational solution with initial and boundary datum $u_o$ in the sense of Definition \ref{def:unconstrainded_solution} such that the gradient bound \eqref{eq:gradient_bound} holds in $\Omega_\tau$.
Let $0 < \tau_1 < \tau_2 < \infty$ and denote by $u_1$ and $u_2$ the variational solutions in $\Omega_{\tau_1}$ and $\Omega_{\tau_2}$, respectively.
By the localization principle with respect to time in Lemma \ref{lem:local time},
$u_2$ is also a variational solution in $\Omega_{\tau_1}$.
Further, $u_1$ and $u_2$ coincide in $\Omega_{\tau_1}$ by the comparison principle in Theorem \ref{thm:comparison principle}.
Therefore, a unique global variational solution in the sense of Definition \ref{def:constrained_solution} can be constructed by taking an increasing sequence of times $(\tau_i)_{i \in \N}$ with $\lim_{i \to \infty} \tau_i = \infty$.

Thus we suppose that  $T < \infty$. For $\epsilon>0$ we define the Steklov average $f_\epsilon \colon [0,T] \times \R^n \to \R$ of $f$ by \eqref{eq:Steklov}.
A straightforward computation shows that $\xi \mapsto f_\epsilon(t,\xi)$ is convex for any $t \in [0,T]$.
Further, for any $\epsilon>0$ the derivative of $f_\epsilon$ with respect to the time variable is given by
$$
	\partial_t f(t,\xi)
	=
	\tfrac{1}{\epsilon} ( f(t+\epsilon,\xi) - f(t,\xi) ).
$$
Combining this with \eqref{eq:dominating_f}, for any $L>0$ we have that
$$
	|\partial_t f(t,\xi)|
	\leq 
	\tfrac{1}{\epsilon} (g_L(t+\epsilon) + g_L(t))
	\quad \text{for all } t \in [0,T], \xi \in B_L(0).
$$
Hence, for any $\epsilon>0$, the integrand $f_\epsilon$ fulfills assumption \eqref{eq:regular_f}.
By Theorem \ref{thm:existence_unconstrained_regular} we conclude that for any $\epsilon>0$ there exists a variational solution $u_\epsilon \in K^\infty_{u_o}(\Omega_T)$ to the Cauchy-Dirichlet problem associated with $f_\epsilon$ in the sense of Definition \ref{def:unconstrainded_solution} satisfying the bound
$$
	\| Du_\epsilon \|_{L^\infty(\Omega_T,\R^n)}
	\leq
	\max\{ Q, \| Du_o \|_{L^\infty(\Omega,\R^n)} \}.
$$
Together with the fact that $u_\epsilon = u_o$ on $\partial\Omega \times (0,T)$, this implies in particular that the sequence $(u_\epsilon)_{\epsilon>0}$ is bounded in $L^\infty(\Omega_T)$.
Thus, there exists a (not relabelled) subsequence and a limit map $u \in L^\infty(\Omega_T)$ such that $u = u_o$ on $\partial\Omega \times (0,T)$,
$$
	\| Du \|_{L^\infty(\Omega_T,\R^n)}
	\leq
	\max\{ Q, \| Du_o \|_{L^\infty(\Omega,\R^n)} \}
$$
and in the limit $\epsilon \downarrow 0$ there holds
\begin{equation}
	\left\{
	\begin{array}{l}
		\mbox{$u_\epsilon \wsto u$ weakly$^\ast$ in $L^\infty(\Omega_T)$,} \\[5pt]
		\mbox{$u_\epsilon(t) \to u(t)$ uniformly for a.e.~$t \in [0,T]$,} \\[5pt]
		\mbox{$Du_\epsilon \wsto Du$ weakly$^\ast$ in $L^\infty(\Omega_T,\R^n)$.}
	\end{array}
	\right.
	\label{eq:main_proof_convergence}
\end{equation}
It remains to show that $u$ is a variational solution to the Cauchy-Dirichlet problem associated with $f$ in the sense of Definition \ref{def:unconstrainded_solution}.
To this end, note that $u_\epsilon$ satisfies the variational inequality
\begin{align}
	\iint_{\Omega_\tau} f_\epsilon(t,Du_\epsilon) \,\dx\dt
	&\leq
	\iint_{\Omega_\tau} \partial_t v (v-u_\epsilon) \,\dx\dt
	+ \iint_{\Omega_\tau} f_\epsilon(t,Dv) \,\dx\dt
	\label{eq:main_proof_var_ineq} \\
	&\phantom{=}
	+\tfrac12 \| v(0) - u_o \|_{L^2(\Omega)}^2
	-\tfrac12 \| (v - u_\epsilon)(\tau) \|_{L^2(\Omega)}^2 \nonumber
\end{align}
for any $\tau \in [0,T] \cap \R$ and any comparison map $v \in K^\infty_{u_o}(\Omega_\tau)$ with $\partial_t v \in L^2(\Omega_\tau)$.
In the following, we pass to the limit $\epsilon \downarrow 0$ in \eqref{eq:main_proof_var_ineq}.
In order to treat the left-hand side, we rewrite
$$
	\iint_{\Omega_\tau} f_\epsilon(t,Du_\epsilon) \,\dx\dt
	=
	\iint_{\Omega_\tau} f(t,Du_\epsilon) \,\dx\dt
	+ \iint_{\Omega_\tau} f_\epsilon(t,Du_\epsilon) - f(t,Du_\epsilon) \,\dx\dt.
$$
By \eqref{eq:main_proof_convergence}$_3$ and Lemma \ref{lem:lower_semicontinuity} we obtain that
$$
	\iint_{\Omega_\tau} f(t,Du) \,\dx\dt
	\leq
	\liminf_{\epsilon \downarrow 0} \iint_{\Omega_\tau} f(t,Du_\epsilon) \,\dx\dt.
$$
Further, for $M := \max\{ Q, \| Du_o \|_{L^\infty(\Omega,\R^n)} \}$ we find that
\begin{align*}
	\bigg| \iint_{\Omega_\tau} f_\epsilon(t,Du_\epsilon) - f(t,Du_\epsilon) \,\dx\dt \bigg|
	\leq
	|\Omega| \int_0^\tau \sup_{|\xi| \leq M} |f_\epsilon(t,\xi) - f(t,\xi)| \,\dt
	\to 0
\end{align*}
as $\epsilon \downarrow 0$ by means of Lemma \ref{lem:Steklov_convergence}.
Joining the preceding two estimates yields
\begin{equation}
	\iint_{\Omega_\tau} f(t,Du) \,\dx\dt
	\leq
	\liminf_{\epsilon \downarrow 0} \iint_{\Omega_\tau} f_\epsilon(t,Du_\epsilon) \,\dx\dt.
	\label{eq:main_proof_aux1}
\end{equation}
Next, by \eqref{eq:main_proof_convergence}$_1$ we have that
\begin{equation}
	\iint_{\Omega_\tau} \partial_t v (v-u) \,\dx\dt
	=
	\liminf_{\epsilon \downarrow 0}
	\iint_{\Omega_\tau} \partial_t v (v-u_\epsilon) \,\dx\dt.
	\label{eq:main_proof_aux2}
\end{equation}
For the second term on the right-hand side of \eqref{eq:main_proof_var_ineq}, by Lemma \ref{lem:Steklov_convergence} we conclude that
\begin{equation}
	\bigg| \iint_{\Omega_\tau} f_\epsilon(t,Dv) - f(t,Dv) \,\dx\dt \bigg|
	\leq
	|\Omega| \int_0^\tau \sup_{|\xi| \leq M} |f_\epsilon(t,\xi) - f(t,\xi)| \,\dt
	\to 0
	\label{eq:main_proof_aux3}
\end{equation}
as $\epsilon \downarrow 0$.
Finally, \eqref{eq:main_proof_convergence}$_2$ shows that
\begin{equation}
	\| (v - u)(\tau) \|_{L^2(\Omega)}^2
	=
	\lim_{\epsilon \downarrow 0} \| (v - u_\epsilon)(\tau) \|_{L^2(\Omega)}^2
	\label{eq:main_proof_aux4}
\end{equation}
for a.e.~$\tau \in [0,T]$.
Collecting \eqref{eq:main_proof_aux1} -- \eqref{eq:main_proof_aux4}, we infer that
\begin{align*}
	\iint_{\Omega_\tau} f(t,Du) \,\dx\dt
	&\leq
	\iint_{\Omega_\tau} \partial_t v (v-u) \,\dx\dt
	+ \iint_{\Omega_\tau} f(t,Dv) \,\dx\dt\\
	&\phantom{=}
	+\tfrac12 \| v(0) - u_o \|_{L^2(\Omega)}^2
	-\tfrac12 \| (v - u)(\tau) \|_{L^2(\Omega)}^2
\end{align*}
for a.e.~$\tau \in [0,T]$ and any $v \in K^\infty_{u_o}(\Omega_\tau)$ with $\partial_t v \in L^2(\Omega_\tau)$.
In particular, this implies that $u \in C^0([0,T];L^2(\Omega))$, see Lemma \ref{lem:time_continuity}.
Therefore, we have that $u \in K^\infty_{u_o}(\Omega_T)$ is a variational solution associated with the integrand $f$ in the sense of Definition \ref{def:unconstrainded_solution}.
Finally, by the comparison principle in Theorem \ref{thm:comparison principle}, $u$ is unique.
This concludes the proof of Theorem \ref{thm:main_existence}.

\section{Continuity in time (Proof of Theorem \ref{thm:regularity})}
\label{sec:time}
To prove Theorem \ref{thm:regularity}, we begin by verifying that the unique variational solution $u$ to the Cauchy-Dirichlet problem associated with \eqref{eq:pde} and $u_o$ in $\Omega_T$ is a weak solution to \eqref{eq:pde} in $\Omega_T$.  To this end, let $\varphi \in C_0^\infty(\Omega_T)$ be a test function. We set $v_h := [u]_h + s[\varphi]_h$, where in the convolution we use the starting values $u_o$ and $\varphi(0) = 0$ for $u$ and $\varphi$, respectively. Using $v_h$ as a comparison function in \eqref{eq:var_ineq_unconstrained} and omitting the boundary term at $T$, we obtain that
\begin{equation}
\label{eq:whatever}
	0 \leq \iint_{\Omega_T} \partial_t v_h (v_h - u)\,\dx\dt + \iint_{\Omega_T} f(t, Dv_h)-f(t,Du)\,\dx\dt.
\end{equation}
Since by \eqref{eq:gradient_bound} we have that
\begin{align*}
	\|Dv_h\|_{L^\infty(\Omega_T,\R^n)}
	&\leq
	\|Du_o\|_{L^\infty(\Omega,\R^n)} + \|Du\|_{L^\infty(\Omega_T,\R^n)} + \|D\varphi\|_{L^\infty(\Omega_T,\R^n)} \\
	&\leq
	2 \|Du_o\|_{L^\infty(\Omega,\R^n)} + Q + \|D\varphi\|_{L^\infty(\Omega_T,\R^n)},
\end{align*}
it follows from \eqref{eq:dominating_f} that the sequence of mappings $(x,t) \mapsto f(t, Dv_h(x,t))$ has an integrable dominant independent of $h$. Therefore by the dominated convergence theorem, we conclude that
$$
	\lim_{h \downarrow 0} \iint_{\Omega_T} f(t,Dv_h) \,\dx\dt
	=
	\iint_{\Omega_T} f(t, Du+sD\varphi) \,\dx\dt.
$$
Further, by integration by parts and the convergence assertions from Lemmas \ref{lem:time_mollification} and \ref{lem:time_mollification_2}, we find that
\begin{align*}
	\iint_{\Omega_T} & \partial_t v_h(v_h-u)\,\dx\dt \\
		& = \iint_{\Omega_T} \partial_t [u]_h([u]_h-u) + s\partial_t [u]_h[\varphi]_h + s\partial_t[\varphi]_h([u]_h + s[\varphi]_h -u)\,\dx\dt \\
		& = \iint_{\Omega_T} \tfrac{1}{h}(u-[u]_h)([u]_h-u) - s \partial_t[\varphi]_h u \,\dx\dt\\
		& \phantom{=} + \int_\Omega s[u]_h[\varphi]_h(T) + \tfrac{s^2}{2} [\varphi]_h^2 (T)\,\dx \\
		& \leq -\iint_{\Omega_T} s\partial_t[\varphi]_hu\,\dx\dt + \int_\Omega s[u]_h[\varphi]_h(T) + \tfrac{s^2}{2} [\varphi]_h^2 (T)\,\dx \\
		&\to
		-\iint_{\Omega_T} s\partial_t \varphi u\,\dx\dt
\end{align*}
in the limit $h \downarrow 0$.
Thus, letting $h \downarrow 0$ in \eqref{eq:whatever} and dividing by $s$ we deduce that
\begin{align*}
	\iint_{\Omega_T} u \partial_t \varphi  \,\dx\dt & \leq \iint_{\Omega_T} \tfrac{1}{s} (f(t, Du+sD\varphi)-f(t,Du)) \,\dx\dt\\
		& = \iint_{\Omega_T} \int_0^1 D_\xi f(t, Du +s\sigma D\varphi)\cdot D\varphi\,\dsigma \dx\dt.
\end{align*}
Finally, observe that by the gradient bound \eqref{eq:gradient_bound} and the assumption \eqref{eq:uniform_lipschitz}, the integrand at the right-hand side of the above inequality is bounded. Thus we may let $s \rightarrow 0$ to obtain that
\begin{equation*}
	\iint_{\Omega_T} u \partial_t \varphi \,\dx\dt \leq \iint_{\Omega_T} D_\xi f(t, Du)\cdot D\varphi \,\dx\dt.
\end{equation*}
The reverse inequality follows by replacing $\varphi$ by $-\varphi$.

Consider cylinders of the form
\begin{equation*}
	Q_r := B_r(x_0) \times (t_0 - r^2, t_0 + r^2) \cap \Omega_T
\end{equation*}
where $(x_0, t_0) \in \overline{\Omega_T}$ and $r>0$. We show that $u$ satisfies the Poincar\'e inequality
\begin{align}
	\biint_{Q_r} &|u - (u)_{Q_r}|^2 \,\dx\dt \nonumber \\
	&\leq  C(n, \Omega) r^2 \bigg(\biint_{Q_r} |Du|^2 \,\dx\dt + \sup_{(x,t) \in Q_r} |D_\xi f(t, Du(x,t))|^2\bigg)
	\label{eq:time_reg 1}
\end{align} 
for all small $r>0$, where the mean value of $u$ over $Q_r$ is denoted by
$$
	(u)_{Q_r}
	:=
	\biint_{Q_r} u \,\dx\dt.
$$
Thus the gradient bound \eqref{eq:gradient_bound} together with condition \eqref{eq:uniform_lipschitz} yields
\begin{equation}
	\biint_{Q_r} |u - (u)_{Q_r}|^2 \,\dx\dt \leq C\big(n, \Omega, Q, \|Du_o\|_{L^\infty(\Omega,\R^n)}, f\big) r^2
\end{equation}
for all $r > 0$. The claim then follows from \cite[Theorem 3.1]{DaPrato}.

To prove \eqref{eq:time_reg 1},  we first note that since $\Omega$ is a convex domain, there exist positive constants $R(\Omega)$ and $C(\Omega)$ such that for any $r\in(0,R)$ and $x_0 \in \overline \Omega$, the set $\Omega \cap B_r(x_0)$ contains a ball of radius $r/C(\Omega)$. Then we assume that $Q_r$ with $r<R$ is given and denote $B_r := B_r(x_0)$, $t_1 := \max(t_0 - r^2, 0)$, $t_2 := \min(t_0 + r^2, T)$ so that $Q_r = (B_r \cap \Omega )\times (t_1, t_2)$. We fix a non-negative weight function $\eta \in C_0^\infty (B_r\cap\Omega)$ such that
\begin{equation*}
\bint_{B_r\cap\Omega} \eta \,\dx = 1 \quad \text{and} \quad \|\eta\|_{L^\infty(\Omega)} + r\|D\eta\|_{L^\infty(\Omega;\mathbb{R}^n)} \leq c(n,\Omega).
\end{equation*}
For the second assertion, we have used that $B_{r}\cap\Omega$ contains a ball of size $r/C(\Omega)$.
Since $B_{r}\cap\Omega$ is convex, the Poincar\'e inequality 
\begin{equation*}
	\int_{B_{r}\cap\Omega}\left|v-(v)_{B_{r}\cap\Omega}\right|^{2}\dx\leq\frac{r^{2}}{\pi^{2}}\int_{B_{r}\cap\Omega}\left|Dv\right|^{2}\dx
\end{equation*}
holds for any $v\in W^{1,2}(B_{r}\cap\Omega)$, see for example \cite{bebendorf}. An application of H\"older's and Minkowski's inequalities on the above further yields
\begin{equation}
	\label{eq:weighted_poincare}
	\bint_{B_r\cap\Omega}|v-(v\eta)_{B_r\cap\Omega}|^2\,\dx \leq c r^2 \bint_{B_r\cap\Omega}|Dv|^2\,\dx
\end{equation}
with a constant $c=c(n,\Omega)$.
We denote the weighted mean of $u$ at time $t$ by
\begin{equation*}
u_\eta(t) := \bint_{B_r\cap\Omega}u(x,t)\eta(x)\,\dx	
\end{equation*}
and decompose the left-hand side of \eqref{eq:time_reg 1} as follows
\begin{align*}
	\biint_{Q_r} & |u-(u)_{Q_r}|^2\,\dx\dt \\
	 		 &  \leq c\, \bint_{t_1}^{t_2}\bint_{B_r \cap\Omega}|u_\eta(t)-(u)_{Q_r}|^2\,\dx\dt + c\, \bint_{t_1}^{t_2}\bint_{B_r\cap\Omega}|u(x,t)-u_\eta(t)|^2\,\dx\dt  \\
			& = c\,\bint_{t_1}^{t_2} \Big|\bint_{t_1}^{t_2}u_\eta(t)-u_\eta(s)\,\ds+\bint_{t_1}^{t_2}u_\eta(s)\,\ds-(u)_{Q_r}\Big|^2\,\dt \\
			& \phantom{=} + c\, \bint_{t_1}^{t_2}\bint_{B_r\cap\Omega}|u(x,t)-u_\eta(t)|^2\,\dx\dt \\
			& \leq  c\, \bint_{t_1}^{t_2} \bint_{t_1}^{t_2} |u_\eta(t)-u_\eta(s)|^2 \,\ds\dt + c\, \Big|\bint_{t_1}^{t_2}u_\eta(s)\,\ds-(u)_{Q_r}\big|^2\\
			& \phantom{=} + c\, \bint_{t_1}^{t_2}\bint_{B_r\cap\Omega}|u(x,t)-u_\eta(t)|^2\,\dx\dt \\
			& =: c\, (I_1 + I_2 + I_3).
\end{align*}
To estimate $I_3$, we apply \eqref{eq:weighted_poincare} to obtain that
\begin{equation*}
	I_3 \leq c(n,\Omega) r^2\bint_{B_r\cap\Omega} |Du|^2\,\dx.
\end{equation*}
The same estimate holds for $I_2$ since by H\"older's inequality we have that
\begin{equation*}
	I_2 = \Big|\bint_{t_1}^{t_2}\bint_{B_r\cap\Omega}u_\eta(s)-u(x,s)\,\dx\ds\Big|^2 \leq I_3.
\end{equation*}
To estimate $I_1$, let $\tau_1, \tau_2 \in (t_1, t_2)$ with $\tau_1 < \tau_2$. As shown above, $u$ is a weak solution to \eqref{eq:pde}. That is, abbreviating $F(x,t) := D_\xi f(t, Du(x,t))$, we find that
\begin{equation*}
\int_0^T\int_\Omega u\partial_t\varphi - F\cdot D\varphi\,\dx\dt = 0\quad \text{for all } \varphi \in W^{1,\infty}_0 (\Omega_T).
\end{equation*}
Fix $\delta>0$ and consider
\[
\psi_\delta(t):=\begin{cases}
0, & t\in(0,\tau_{1}-\delta],\\
\frac{1}{2\delta}(t-(\tau_{1}-\delta)), & t\in(\tau_{1}-\delta,\tau_{1}+\delta),\\
1, & t\in[\tau_{1}+\delta,\tau_{2}-\delta],\\
1-\frac{1}{2\delta}(t-(\tau_{2}-\delta)), & t\in(\tau_{2}-\delta,\tau_{2}+\delta),\\
0, & t\in[\tau_{2}+\delta,T).
\end{cases}
\]
Using the test function $\varphi(x,t):=\eta\psi_\delta$ in the weak Euler-Lagrange equation yields
\begin{align*}
	0 &= \int_0^T \int_\Omega u\eta\partial_t\psi_\delta - \psi_\delta F \cdot D\varphi \,\dx\dt \\
		& = \bint_{\tau_1-\delta}^{\tau_1+\delta} \int_{B_r\cap\Omega} u\eta \,\dx\dt-\bint_{\tau_2-\delta}^{\tau_2+\delta} \int_{B_r\cap\Omega} u\eta \,\dx\dt+\int_{\tau_1-\delta}^{\tau_2+\delta}\int_{B_r\cap\Omega} \psi_\delta F \cdot D\eta \, \dx\dt.
\end{align*}
Passing to the limit $\delta \downarrow 0$, the preceding inequality implies that
\begin{align*}
	|u_\eta(\tau_1) - u_\eta(\tau_2)| & \leq \int_{\tau_1}^{\tau_2} \bint_{B_r\cap\Omega} |F\cdot D\eta|\,\dx\dt \\
						& \leq (\tau_2 - \tau_1) \|D\eta\|_{L^\infty}(\Omega,\mathbb{R}^n)\sup_{(x,t) \in Q_r } |F(x,t)| \\
						& = 2c(n,\Omega) r \sup_{(x,t) \in Q_r} |D_\xi f(t, Du(x,t))|
\end{align*}
holds true for almost every $\tau_1, \tau_2 \in (t_1,t_2)$.
In the last inequality, we used that $\tau_2 - \tau_1 \leq t_2 - t_1 \leq 2r^2$. Thus
\begin{equation}
I_1 \leq c(n,\Omega) r^2\sup_{(x,t) \in Q_r} |D_\xi f(t, Du(x,t))|^2.
\end{equation} Inequality \eqref{eq:time_reg 1} now follows by combining the estimates of $I_1, I_2$ and $I_3$.

\section*{Acknowledgements}
Jarkko Siltakoski was funded by the Magnus Ehrnrooth foundation.

\end{document}